\providecommand{\U}[1]{\protect\rule{.1in}{.1in}}
\newtheorem{theorem}{Theorem}[section]
\newtheorem{proposition}[theorem]{Proposition}
\newtheorem{corollary}[theorem]{Corollary}
\newtheorem{example}[theorem]{Example}
\newtheorem{remark}[theorem]{Remark}
\newtheorem{lemma}[theorem]{Lemma}
\newtheorem{definition}[theorem]{Definition}
\begin{document}

\title{\textsc{Approximation properties determined by operator ideals}}
\author{Sonia Berrios and Geraldo Botelho\thanks{Supported by CNPq Grant 306981/2008-4.\hfill\newline2010
Mathematics Subject Classification: 46B28, 47B10, 46G20, 46G25.}}
\date{}
\maketitle

\begin{abstract}
Given an operator ideal $\cal I$, a Banach space $E$ has the $\cal I$-approximation property if operators on $E$ can be uniformly approximated on compact subsets of $E$ by operators belonging to $\cal I$. In this paper the $\cal I$-approximation property is studied in projective tensor products, spaces of linear functionals, spaces of homogeneous polynomials (in particular, spaces of linear operators), spaces of holomorphic functions and their preduals.
\end{abstract}


\section{Introduction}
Given Banach spaces $E$ and $F$, by ${\cal L}(E;F)$ we denote the Banach space of all bounded linear operators
from $E$ to $F$ endowed with the usual operator sup norm.
The subspaces of ${\cal L}(E;F)$ formed by all finite rank, compact and weakly compact operators
 are denoted by ${\cal F}(E;F)$, ${\cal K}(E;F)$ and ${\cal W}(E;F)$, respectively. For a subset $S$ of ${\cal L}(E;F)$,
  the symbol $\overline{S}^{\tau_c}$ represents the closure of $S$ with respect to the compact-open topology $\tau_c$.\\
\indent It is well known that a Banach space $E$ has\\
$\bullet$ the approximation property (in short, $E$ has AP) if ${\cal L}(E;E) = \overline{{\cal F}(E;E)}^{\tau_c}$,\\
$\bullet$ the compact approximation property (in short, $E$ has CAP) if ${\cal L}(E;E) = \overline{{\cal K}(E;E)}^{\tau_c}$,\\
$\bullet$ the weakly compact approximation property (in short, $E$ has WCAP) if ${\cal L}(E;E) = \overline{{\cal W}(E;E)}^{\tau_c}$.\\
\indent The AP is a classic in Banach space theory (see \cite{Casazza}) and is one of the main subjects of Grothendieck \cite{Grothendieck}.
 The CAP has been more studied in the last decades and recently (see, e.g. \cite{choi-kim, choi-kim-outro, Caliskan3, Caliskan1}), but it goes back to Banach \cite[p.\,237]{banach}. The WCAP has been studied more recently (see \cite{Caliskan1,Caliskan4}).
 Having in mind that ${\cal F}, {\cal K}$ and $\cal W$ are operator ideals, the properties above
 can be regarded as particular instances of the following general concept:

\begin{definition} \label{def}\rm Let $\cal I$ be an operator ideal. A Banach space $E$ is said to have the {\rm $\cal I$-approximation property} (in short, $E$ has $\cal I$-AP) if ${\cal L}(E;E) = \overline{{\cal I}(E;E)}^{\tau_c}$.
\end{definition}
Several variants of the approximation property have been studied recently (see, e.g, \cite{choi-kim, choi-kim-lee, dops, delgadoJMAA, lo, oja2008, sinha}), including ones closely related to the $\cal I$-AP \cite{llo, lmo, oja}.\\
\indent The selection of operator ideals instead of other classes of linear operators related to $\cal F$, $\cal K$ and $\cal W$ is justified by the fact that even the most basic results depend on the ideal property (cf. Section 3).\\
\indent It is clear that if $E$ has AP then $E$ has $\cal I$-AP for every operator ideal $\cal I$.
In particular, Banach spaces with Schauder basis (e.g., $\ell_p$, $1 \leq p < \infty$, and $c_0$)
have $\cal I$-AP for every operator ideal $\cal I$. \\
\indent Let us stress that different ideals may give rise to different approximation properties: (i) Willis \cite{Willis} showed that there are spaces with CAP
but not with AP; (ii) Szankowski \cite{szankowski} proved that for $1 \leq p < 2$, $\ell_p$ has a subspace $S_p$ without CAP, so $S_{\frac32}$ has WCAP but not CAP and $S_1$ has ${\cal CC} \cap {\cal C}_2$-AP but not CAP, where ${\cal CC}$ and ${\cal C}_2$ are the ideals of completely continuous and cotype 2 operators, respectively. The fact that different operator ideals usually give rise to different approximation properties justifies the study of the $\cal I$-AP for arbitrary operator ideals, which is the aim of this paper. In Example \ref{example} we shall see that different ideals may generate the same approximation property.\\
\indent The study of the approximation property and its already studied variants is very rich and multifaceted, so the study of the $\cal I$-AP could follow several different trends. This means that, to study the $\cal I$-AP, choices have to be made. In this paper we have chosen to study the $\cal I$-AP in projective tensor products (Section 5) and in spaces of mappings between Banach spaces, namely, spaces of linear functionals (Section 4), spaces of homogeneous polynomials (Section 6) and spaces of holomorphic functions and their preduals (Section 7). Proposition \ref{certo} fixes and generalizes a result of \cite{Caliskan4}.\\
 \indent The results we prove in the different sections of the paper seem - at first glance - to be completely disconnected. Connections of results from different sections are given in Section 7.

\section{ Notation and preliminaries}
When $F$ is the scalar field $\mathbb{K} = \mathbb{R}$ or $\mathbb{C}$, we shall write $E'$ instead of $\mathcal{L}(E;\mathbb{K})$.
 The  {\it compact-open topology or topology of compact convergence} is the locally convex topology $\tau_c$
on  $\mathcal{L}(E;F)$ which is generated by the seminorms of the form
$$p_K(T)=\sup_{x\in K}\|T(x)\|,$$
where $K$ ranges over all compact subsets of  $E$.\\
\indent Given a subset $S$ of $\mathcal{L}(E;F)$, $\overline{S}^{\tau_c} = \mathcal{L}(E;F)$ if and only if for every $T \in \mathcal{L}(E;F)$, every compact set $K\subseteq E$ and every $\varepsilon >0$, there is an operator $U\in S$ such that $\|T(x)-U(x)\|<\varepsilon$ for every $x\in K$.

\indent An {\it operator ideal} $\mathcal{I}$ is a subclass of the class
of all continuous linear operators between Banach spaces such that
for all Banach spaces  $E$ e $F$, the component
$\mathcal{I}(E;F)=\mathcal{L}(E;F)\cap \mathcal{I}$ satisfy:\\
(a) $\mathcal{I}(E;F)$ is a linear subspace of
$\mathcal{L}(E;F)$ which contains the finite rank operators.\\
(b) Ideal property: If $T\in \mathcal{L}(E;F)$, $R\in
\mathcal{I}(F;G)$ and $S\in \mathcal{L}(G;H)$, then the composition
 $S\circ R\circ T$ is in $\mathcal{I}(E;H)$.

\medskip

 By $id_E$ we mean the identity operator on the Banach space $E$. For a given operator ideal $\cal I$, by $\overline{\cal I}$ we mean the closure of $\cal I$, that is, $\overline{\cal I}(E;F) = \overline{{\cal I}(E;F)}$ for every Banach spaces $E$ and $F$. For the theory of operator ideals we refer to \cite{Pietsch,klaus}. Here is a list of the operator ideals occurring in this paper:\\
$\cal F$ = finite rank operators (the range is finite-dimensional),\\
$\cal A := \overline{\cal F}$ = approximable operators,\\
$\cal K$ = compact operators (bounded sets are mapped onto relatively compact sets),\\
$\cal W$ = weakly compact operators (bounded sets are mapped onto relatively weakly compact sets),\\
${\cal CC}$ = completely continuous operators (weakly convergent sequences are sent to norm convergent sequences),\\
${\cal N}_p$ = $p$-nuclear operators,\\
${\cal C}_p$ = cotype $p$ operators,\\
${\cal T}_p$ = type $p$ operators,\\
$\cal D$ = dualisable operators,\\
$\cal S$ = separable operators (the range is separable),\\
$\cal DP := \mathcal{W}^{-{\rm 1}} \circ {\cal CC}$ = Dunford-Pettis operators,\\
$\cal J$ = integral operators,\\
$\cal SN$ = strongly nuclear operators,\\
$\cal SS$ = strictly singular operators (restrictions to infinite-dimensional subspaces are never isomorphisms),\\
$\cal SC$ = strictly cosingular operators,\\
$\Pi_p$ = absolutely $p$-summing operators,\\
$\Pi_{r,p,q}$ = absolutely $(r,p,q)$-summing operators \cite[17.1]{Pietsch},\\
$\Gamma_p$ = $p$-factorable operators,\\
$\cal KC$ = K-convex operators \cite[31.1]{klaus},\\
$\cal QN$ = quasinuclear operators \cite[Ex. 9.13]{klaus},\\
${\cal L}_{\infty, q, \gamma}$ = Lorentz-Zigmund operators \cite{cobos},\\
${\cal U}_p $ = operators having approximation numbers belonging to $\ell_p$  \cite[14.2.4]{Pietsch},\\
${\cal L}_{p,q}$ = $(p,q)$-factorable operators,\\
${\cal K}_p$ = $p$-compact operators (bounded sets are mapped to relatively $p$-compact sets),\\
${\cal QN}_p$ = quasi $p$-nuclear operators \cite{delgadostudia}.\\


\section{Basic results}
The results of this section, except for some implications of Proposition \ref{charac}, are elementary enough to have their proofs omitted. Nevertheless, it is worth mentioning that the ideal property plays a crucial role in their (easy) proofs.

The following characterizations are simple but useful.

\begin{proposition}\label{propos} Given an operator ideal $\cal I$, the following are equivalent for a Banach space $E$:\\
{\rm (a)} $E$ has the $\cal I$-approximation property.\\
{\rm (b)} $E$ has the $\overline{\cal I}$-approximation property.\\
{\rm (c)} $id_E \in \overline{{\cal I}(E;E)}^{\tau_c}$.\\
{\rm (d)} For each compact set $K\subseteq
E$ and every $\varepsilon
>0$, there is an operator $T\in \mathcal{I}(E;E)$ such that
$\|T(x)-x\|<\varepsilon$ for every $x\in K$.
\end{proposition}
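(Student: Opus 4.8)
The plan is to prove the chain of equivalences (a) $\Leftrightarrow$ (b) $\Leftrightarrow$ (c) $\Leftrightarrow$ (d), exploiting at each step that $\cal I$ is an operator ideal containing the identity-composable finite rank operators. The equivalence (a) $\Leftrightarrow$ (d) is essentially a restatement: by the characterization of $\tau_c$-density recalled in Section 2, ${\cal L}(E;E) = \overline{{\cal I}(E;E)}^{\tau_c}$ holds if and only if for every $T \in {\cal L}(E;E)$, every compact $K \subseteq E$ and every $\varepsilon > 0$ there is $U \in {\cal I}(E;E)$ with $\|T(x) - U(x)\| < \varepsilon$ on $K$; taking $T = id_E$ gives (d). For the converse, given arbitrary $T \in {\cal L}(E;E)$, a compact set $K$ and $\varepsilon > 0$, apply (d) to the compact set $T(K)$ and $\varepsilon' = \varepsilon$ to get $S \in {\cal I}(E;E)$ with $\|S(y) - y\| < \varepsilon$ for all $y \in T(K)$; then $S \circ T \in {\cal I}(E;E)$ by the ideal property, and $\|(S\circ T)(x) - T(x)\| < \varepsilon$ for all $x \in K$. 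This is the step where the ideal property is genuinely used, and it is the heart of the argument.

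Next I would observe that (a) $\Leftrightarrow$ (c) is immediate from the argument just given: (c) is precisely (d) rephrased via the density characterization with $T = id_E$, and we have just shown (d) $\Rightarrow$ (a) and (a) $\Rightarrow$ (d) $\Rightarrow$ (c) trivially. (So really (a), (c), (d) are all shown equivalent in one pass.)

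Finally, for (a) $\Leftrightarrow$ (b), note that ${\cal I}(E;E) \subseteq \overline{\cal I}(E;E)$ gives $\overline{{\cal I}(E;E)}^{\tau_c} \subseteq \overline{\overline{\cal I}(E;E)}^{\tau_c}$, so (a) $\Rightarrow$ (b) is trivial. For (b) $\Rightarrow$ (a), it suffices to show $\overline{\overline{\cal I}(E;E)}^{\tau_c} \subseteq \overline{{\cal I}(E;E)}^{\tau_c}$, i.e. that any operator $\tau_c$-approximable by norm-limits of ${\cal I}$-operators is already $\tau_c$-approximable by ${\cal I}$-operators. Given $T$, a compact $K$ and $\varepsilon>0$, pick $V \in \overline{\cal I}(E;E)$ with $p_K(T - V) < \varepsilon/2$; since $\sup_{x \in K}\|x\| < \infty$ (compact sets are bounded), choose $U \in {\cal I}(E;E)$ with $\|V - U\|\cdot \sup_{x\in K}\|x\| < \varepsilon/2$, so that $p_K(V - U) < \varepsilon/2$ and hence $p_K(T - U) < \varepsilon$. (Equivalently, $\overline{{\cal I}(E;E)}^{\tau_c} = \overline{\overline{{\cal I}(E;E)}}^{\tau_c}$ because $\tau_c$ is coarser than the norm topology and norm-closures don't enlarge $\tau_c$-closures.)

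The only place requiring real care — and the one I would flag as the main point rather than an obstacle — is the ideal-property step in (d) $\Rightarrow$ (a): one must approximate $T$ itself, not merely $id_E$, and this is done by composing an ${\cal I}$-approximant of the identity (on the compact set $T(K)$) on the left with $T$. Everything else is bookkeeping with seminorms $p_K$ and the fact that compact sets are bounded.
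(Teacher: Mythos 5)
Your proof is correct and is exactly the standard argument the paper has in mind (the paper omits the proof of this proposition as elementary, remarking only that the ideal property is crucial). You correctly locate the one nontrivial point --- approximating a general $T$ by composing an $\mathcal{I}$-approximant of $id_E$ on the compact set $T(K)$ with $T$, which is where the ideal property enters --- and the remaining steps (the $\tau_c$-density reformulation and the fact that norm closure does not enlarge $\tau_c$-closures on bounded, hence compact, sets) are handled correctly.
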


Given operator ideals ${\cal I}_1$ and ${\cal I}_2$, we say that ${\cal I}_1$-AP = ${\cal I}_2$-AP if the Banach spaces having ${\cal I}_1$-AP are exactly the ones having ${\cal I}_2$-AP. The equivalence between (a) and (b) in Proposition \ref{propos} says that ${\cal I}$-AP = $\overline{\cal I}$-AP for every operator ideal $\cal I$. In particular,

{\begin{corollary} Let ${\cal I}_1$ and ${\cal I}_2$ be operator ideals. If $\overline{{\cal I}_1} = \overline{{\cal I}_2}$, then \linebreak ${\cal I}_1$-AP = ${\cal I}_2$-AP.
\end{corollary}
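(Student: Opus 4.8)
The plan is to read this off directly from the equivalence of (a) and (b) in Proposition~\ref{propos}, which says that for any operator ideal $\cal I$ a Banach space has the $\cal I$-approximation property if and only if it has the $\overline{\cal I}$-approximation property. No new work beyond that proposition is needed.

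Concretely, I would fix an arbitrary Banach space $E$ and chain three equivalences. First, Proposition~\ref{propos} applied with ${\cal I} = {\cal I}_1$ gives that $E$ has ${\cal I}_1$-AP if and only if $E$ has $\overline{{\cal I}_1}$-AP. Second, the hypothesis $\overline{{\cal I}_1} = \overline{{\cal I}_2}$ — which, unwound according to the definition of the closure of an operator ideal given in Section~2, means $\overline{{\cal I}_1(X;Y)} = \overline{{\cal I}_2(X;Y)}$ for all Banach spaces $X,Y$, in particular for $X = Y = E$ — makes the statements ``$E$ has $\overline{{\cal I}_1}$-AP'' and ``$E$ has $\overline{{\cal I}_2}$-AP'' literally the same. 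Third, Proposition~\ref{propos} applied with ${\cal I} = {\cal I}_2$ gives that $E$ has $\overline{{\cal I}_2}$-AP if and only if $E$ has ${\cal I}_2$-AP. Composing, $E$ has ${\cal I}_1$-AP if and only if $E$ has ${\cal I}_2$-AP; since $E$ was arbitrary, the class of Banach spaces with ${\cal I}_1$-AP coincides with the class of those with ${\cal I}_2$-AP, which by the definition preceding the corollary is exactly the assertion ${\cal I}_1$-AP $=$ ${\cal I}_2$-AP.

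I do not expect any genuine obstacle: the corollary is a purely formal consequence of Proposition~\ref{propos}, and the only point requiring a moment's care is to interpret the hypothesis $\overline{{\cal I}_1} = \overline{{\cal I}_2}$ componentwise on the spaces ${\cal L}(E;E)$, exactly as the notation $\overline{\cal I}$ was introduced.
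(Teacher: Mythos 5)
Your argument is exactly the paper's: the corollary is stated as an immediate consequence of the equivalence (a) $\Longleftrightarrow$ (b) in Proposition \ref{propos}, which gives ${\cal I}$-AP $=$ $\overline{\cal I}$-AP for every operator ideal, and your chaining of the two applications through the hypothesis $\overline{{\cal I}_1} = \overline{{\cal I}_2}$ is precisely how the paper intends it. The proof is correct and complete.
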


\begin{example}\label{example} \rm Since ${\cal F} \subseteq {\cal N}_p \subseteq \overline{{\cal F}} = {\cal A}$ \cite[Proposition 19.7.3]{jarchow}, it holds that ${\cal N}_p$-AP = AP whereas ${\cal F} \neq {\cal N}_p \neq \overline{{\cal F}} = {\cal A}$.
\end{example}



Let us see a few more interesting conditions that are equivalent to the $\cal I$-AP:





\begin{proposition}\label{charac}
Let $\mathcal{I}$ be an operator ideal. The following statements are equivalent for a Banach space $E$:\\
{\rm (a)} $E$ has the $\mathcal{I}$-approximation property.\\
{\rm (b)} For every Banach $F$, $\mathcal{L}(E;F)=\overline{\mathcal{I}(E;F)}^{\tau_c}$.\\
{\rm (c)} For every Banach $F$, $\mathcal{L}(F;E)=\overline{\mathcal{I}(F;E)}^{\tau_c}$.\\
{\rm (d)} $\sum_{n=1}^{\infty}x'_n(T(x_n))=0$ for every $T \in {\cal L}(E;E)$ whenever the sequences $(x_n)\subseteq E$ and
$(x'_n)\subseteq E'$ are such that
$\sum_{n=1}^{\infty}\|x'_n\|\|x_n\|<\infty$ and
 $\sum_{n=1}^{\infty}x'_n(T(x_n))=0$ for every $T\in \mathcal{I}(E;E)$. \\
{\rm (e)} $\sum_{n=1}^{\infty}x'_n(x_n)=0$ whenever the sequences $(x_n)\subseteq E$ and
$(x'_n)\subseteq E'$ are such that
$\sum_{n=1}^{\infty}\|x'_n\|\|x_n\|<\infty$ and
 $\sum_{n=1}^{\infty}x'_n(T(x_n))=0$ for every $T\in \mathcal{I}(E;E)$.
\end{proposition}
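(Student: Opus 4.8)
The plan is to prove the cycle of implications (a) $\Rightarrow$ (b) $\Rightarrow$ (c) $\Rightarrow$ (a) for the first three statements, and then (a) $\Rightarrow$ (e) $\Rightarrow$ (d) $\Rightarrow$ (a) to close off the sequential conditions. For (a) $\Rightarrow$ (b): given $T \in \mathcal{L}(E;F)$, a compact $K \subseteq E$ and $\varepsilon > 0$, use Proposition \ref{propos}(d) to find $S \in \mathcal{I}(E;E)$ with $\|S(x) - x\| < \varepsilon / (\|T\| + 1)$ for all $x \in K$; then $T \circ S \in \mathcal{I}(E;F)$ by the ideal property (composition of $S \in \mathcal{I}$ with $T \in \mathcal{L}$ on the left), and $\|T(S(x)) - T(x)\| \le \|T\| \cdot \varepsilon/(\|T\|+1) < \varepsilon$ on $K$. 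The implication (b) $\Rightarrow$ (a) is trivial (take $F = E$). For (c) $\Rightarrow$ (a), take $F = E$ and approximate $id_E$. The genuinely new point is (a) $\Rightarrow$ (c): here one wants to approximate $R \in \mathcal{L}(F;E)$ uniformly on a compact $K \subseteq F$. The natural move is $R(K)$ is compact in $E$, so by (d) of Proposition \ref{propos} pick $S \in \mathcal{I}(E;E)$ with $\|S(y) - y\| < \varepsilon$ for all $y \in R(K)$; then $S \circ R \in \mathcal{I}(F;E)$ and $\|S(R(x)) - R(x)\| < \varepsilon$ for all $x \in K$. So in fact both directions are short once one exploits the ideal property on the correct side.

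For the sequential conditions, the identification is with the classical Grothendieck-style reformulation of the AP. For (a) $\Rightarrow$ (e): suppose $(x_n) \subseteq E$, $(x'_n) \subseteq E'$ with $\sum \|x'_n\|\|x_n\| < \infty$ and $\sum_n x'_n(Tx_n) = 0$ for all $T \in \mathcal{I}(E;E)$. Consider the functional $\Phi$ on $\mathcal{L}(E;E)$ defined by $\Phi(T) = \sum_n x'_n(Tx_n)$; the summability condition shows $\Phi$ is well-defined and $\tau_c$-continuous on $\mathcal{L}(E;E)$ (because the set $\{x_n\} \cup \{0\}$ is compact and $|\Phi(T)| \le \sum \|x'_n\| \cdot \sup_n \|Tx_n\|$ after a standard tail estimate, or more carefully: split off a finite initial segment). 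Since $\Phi$ vanishes on $\mathcal{I}(E;E)$ and is $\tau_c$-continuous, it vanishes on $\overline{\mathcal{I}(E;E)}^{\tau_c} = \mathcal{L}(E;E)$, in particular $\Phi(id_E) = \sum_n x'_n(x_n) = 0$. The implication (e) $\Rightarrow$ (d) is immediate by substituting: under the hypotheses of (d), the inner data $(x_n)$, $(x'_n \circ T)$... actually more directly, (d) and (e) differ only in the conclusion $\sum x'_n(T x_n) = 0$ versus $\sum x'_n(x_n) = 0$, so (e) applied with the same sequences gives $\sum x'_n(x_n) = 0$, but we want $\sum x'_n(Tx_n)=0$; here one uses that $x_n$ may be replaced by... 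I would instead argue (e) $\Rightarrow$ (d) by noting that $\mathcal{I}(E;E)$ is an ideal, so for fixed $T \in \mathcal{L}(E;E)$, the vanishing $\sum x'_n(Sx_n)=0$ for all $S \in \mathcal{I}$ also holds with $S$ replaced by $ST$ (which is in $\mathcal{I}$), i.e. $\sum (x'_n)(S(Tx_n)) = 0$ for all $S\in\mathcal I$; applying (e) to the pair $(Tx_n),(x'_n)$ — whose summability still holds since $\|Tx_n\| \le \|T\|\|x_n\|$ — yields $\sum x'_n(Tx_n) = 0$. Finally (d) $\Rightarrow$ (a): this is the standard Hahn–Banach separation. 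If $id_E \notin \overline{\mathcal{I}(E;E)}^{\tau_c}$, then since the latter is a closed subspace in the locally convex space $(\mathcal{L}(E;E),\tau_c)$, Hahn–Banach gives a $\tau_c$-continuous functional $\Phi$ vanishing on $\mathcal{I}(E;E)$ with $\Phi(id_E) \ne 0$; a $\tau_c$-continuous functional on $\mathcal{L}(E;E)$ has exactly the form $T \mapsto \sum_n x'_n(Tx_n)$ with $\sum \|x'_n\|\|x_n\| < \infty$ (this is the description of the dual of $(\mathcal{L}(E;E),\tau_c)$), contradicting (d).

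The main obstacle, and the step I would write out most carefully, is the identification of the $\tau_c$-continuous dual of $\mathcal{L}(E;E)$ with the space of such absolutely summing pairs $((x_n),(x'_n))$, and the corresponding Hahn–Banach step in (d) $\Rightarrow$ (a): one must check that a $\tau_c$-continuous linear functional, being dominated by a single seminorm $p_K$ for some compact $K$, can be represented through a compact set of the form $\{\lambda_n x_n\}$ with $(x_n)$ null and $\sum|\lambda_n| < \infty$ — this uses the standard fact that every compact subset of a Banach space is contained in the closed absolutely convex hull of a norm-null sequence. Everything else is a routine application of the ideal property (to push compositions into $\mathcal{I}$) and the $\tau_c$-density hypothesis; I expect the bulk of the proof to consist of these short arguments, with the duality description of $(\mathcal{L}(E;E),\tau_c)'$ being the one genuinely technical ingredient worth citing or proving in a line or two.
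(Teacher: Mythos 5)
Your proof is correct, and it uses the same essential machinery as the paper: the ideal property for the operator-valued conditions (a), (b), (c), and Grothendieck's description of $(\mathcal{L}(E;E),\tau_c)'$ together with Hahn--Banach for the sequential conditions (d), (e). The one place where you genuinely diverge is the orientation of the cycle through (d) and (e). The paper proves (a) $\Rightarrow$ (d) directly (by contradiction: a violating pair of sequences yields a $\tau_c$-continuous functional vanishing on $\mathcal{I}(E;E)$ but not at some $U$, so $U\notin\overline{\mathcal{I}(E;E)}^{\tau_c}$) and then gets (d) $\Rightarrow$ (e) for free by taking $T=id_E$. You instead prove only the special case (a) $\Rightarrow$ (e) analytically and recover the general statement (d) from (e) by a purely algebraic trick: for fixed $T\in\mathcal{L}(E;E)$ the pair $\bigl((T x_n),(x'_n)\bigr)$ still satisfies the hypotheses of (e), since $\sum\|x'_n\|\|Tx_n\|\le\|T\|\sum\|x'_n\|\|x_n\|<\infty$ and $x'_n\bigl(S(Tx_n)\bigr)=x'_n\bigl((S\circ T)(x_n)\bigr)$ with $S\circ T\in\mathcal{I}(E;E)$ by the ideal property. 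This is a nice observation not in the paper; it localizes the Hahn--Banach/duality work to the single implication involving $id_E$ and shows that the passage from (e) to (d) is formal. The only point you should write out carefully (and you correctly flag it) is the $\tau_c$-continuity of $\Phi(T)=\sum_n x'_n(Tx_n)$: the clean argument is a rescaling $x_n\mapsto x_n/a_n$, $x'_n\mapsto a_nx'_n$ with $a_n$ chosen so that $x_n/a_n\to 0$ and $\sum a_n\|x'_n\|<\infty$ (possible since $\sum\|x'_n\|\|x_n\|<\infty$), rather than splitting off a finite initial segment, which by itself does not suffice; alternatively just cite the Grothendieck description as the paper does.
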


\begin{proof} (a) $\Longrightarrow$ (b) and (a) $\Longrightarrow$ (c) are straightforward. (b) $\Longrightarrow$ (a), (c) $\Longrightarrow $ (a) and (d) $\Longrightarrow $ (e) are obvious.

\medskip

\noindent (e) $\Longrightarrow$ (a) Let $\varphi \in ({\cal L}(E;E),{\tau_c})'$ be such that $\varphi(T) = 0$ for every $T \in {\cal I}(E;E)$. By Grothendieck's description \cite{Grothendieck} of the functionals belonging to $({\cal L}(E;E),{\tau_c})'$ (proofs can be found in \cite[Proposition 1.e.3]{lt} and \cite[Lemma VIII.3.3]{du}), there are sequences $(x_n)\subseteq E$ and
$(x'_n)\subseteq E'$ such that $\sum_{n=1}^{\infty}\|x'_n\|\|x_n\|<\infty$ and
 $\varphi(T) = \sum_{n=1}^{\infty}x'_n(T(x_n))$ for every $T\in \mathcal{L}(E;E)$. By (d) we have that $\varphi(id_E) = \sum_{n=1}^{\infty}x'_n(x_n)=0$. Hence $\varphi(id_E) =0$ for every functional $\varphi \in ({\cal L}(E;E),{\tau_c})'$ that vanishes on ${\cal I}(E;E)$. By the Hahn-Banach theorem (see, e.g., \cite[Corollary 2.2.20]{Megginson}) it follows that $id_E \in \overline{{\cal I}(E;E)}^{\tau_c}$.

\medskip

\noindent (a) $\Longrightarrow$ (d) Assume that (d) does not hold. In this case there are sequences $(x_n)\subseteq E$ and
$(x'_n)\subseteq E'$ such that
$\sum_{n=1}^{\infty}\|x'_n\|\|x_n\|<\infty$,
 $\sum_{n=1}^{\infty}x'_n(T(x_n))=0$ for every $T\in \mathcal{I}(E;E)$ and $\sum_{n=1}^{\infty}x'_n(U(x_n))\neq0$ for some $U\in \mathcal{L}(E;E)$. Defining
$$\varphi \colon \mathcal{L}(E;E) \longrightarrow \mathbb{K}~,~ \varphi(T)=\sum_{n=1}^{\infty}x'_n(T(x_n)),$$
by the above mentioned Grothendieck's description we know that $\varphi\in
(\mathcal{L}(E;E), \tau_c)'$. Thus $\varphi$ vanishes on $\mathcal{I}(E;E)$ but $\varphi(U) \neq 0$. Calling on Hahn-Banach once more we conclude that $U \notin \overline{\mathcal{I}(E;E)}^{\tau_c}$, which contradicts (a).
\end{proof}

As expected, $\cal I$-AP is inherited by complemented subspaces and is stable under the formation of finite cartesian products:

\begin{proposition} \label{complemented}
Let $\mathcal{I}$ be an operator ideal and $E$ be a Banach
space with the $\mathcal{I}$-approximation property. Then every complemented subspace of $E$ has the
$\mathcal{I}$-approximation property as well.
\end{proposition}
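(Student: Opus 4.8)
The plan is to combine the characterization of the $\mathcal{I}$-approximation property given in Proposition~\ref{propos}(d) with the ideal property of $\mathcal{I}$. Let $M$ be a complemented subspace of $E$, choose a bounded projection $P\colon E\to E$ with $P(E)=M$, denote by $j\colon M\to E$ the inclusion and by $\pi\colon E\to M$ the operator $P$ regarded as taking values in its range, so that $P=j\circ\pi$ and $\pi\circ j=id_M$. In particular $M$, being the range of a bounded projection, is a Banach space, so $\mathcal{I}(M;M)$ makes sense.

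First I would fix an arbitrary compact set $K\subseteq M$ and $\varepsilon>0$. Since $j$ is continuous, $j(K)$ is compact in $E$, so by the $\mathcal{I}$-AP of $E$ and Proposition~\ref{propos}(d) there is $S\in\mathcal{I}(E;E)$ with $\|S(y)-y\|<\varepsilon$ for every $y\in j(K)$. I would then put
\[
T:=\pi\circ S\circ j\colon M\longrightarrow M.
\]
Writing $T$ as the composition $\pi\circ S\circ j$ with $j\in\mathcal{L}(M;E)$, $S\in\mathcal{I}(E;E)$ and $\pi\in\mathcal{L}(E;M)$, the ideal property yields $T\in\mathcal{I}(M;M)$.

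The remaining estimate is routine: for $x\in K$, using $\pi\circ j=id_M$,
\[
\|T(x)-x\|=\|\pi(S(j(x)))-\pi(j(x))\|\le\|\pi\|\,\|S(j(x))-j(x)\|<\|\pi\|\,\varepsilon,
\]
and since $\|\pi\|\le\|P\|$ does not depend on $K$ or $\varepsilon$, running the argument from the start with $\varepsilon/(1+\|P\|)$ in place of $\varepsilon$ produces an operator $T\in\mathcal{I}(M;M)$ with $\|T(x)-x\|<\varepsilon$ for all $x\in K$; a final appeal to Proposition~\ref{propos}(d) shows that $M$ has the $\mathcal{I}$-AP. There is essentially no serious obstacle here: the only place where it is genuinely needed that $\mathcal{I}$ is an operator ideal, rather than merely a class of operators containing $\mathcal{F}$, is the step $T=\pi\circ S\circ j\in\mathcal{I}(M;M)$, that is, the ideal property applied to this factorization through $S$.
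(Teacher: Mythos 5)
Your proof is correct and is exactly the argument the authors intend: the paper omits the proof of Proposition \ref{complemented} as elementary, remarking only that the ideal property plays a crucial role, which is precisely the step $T=\pi\circ S\circ j\in\mathcal{I}(M;M)$ that you single out. Nothing to add.
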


\begin{proposition}\label{sum} Let $\mathcal{I}$ be an operator ideal, $k \in \mathbb{N}$ and $E_1, \ldots, E_k$ be Banach spaces. Then the finite direct sum (or cartesian product) $E=\bigoplus_{n=1}^{k} E_{n}$ has the
$\mathcal{I}$-approximation property if and only if $E_1, \ldots, E_n$ have the $\mathcal{I}$-approximation property.
\end{proposition}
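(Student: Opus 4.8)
The plan is to prove both implications by exploiting Proposition~\ref{complemented} in one direction and a direct approximation argument in the other. Write $E = \bigoplus_{n=1}^k E_n$ and let $\pi_n \colon E \to E_n$ and $\iota_n \colon E_n \to E$ denote the canonical projections and inclusions, so that $\sum_{n=1}^k \iota_n \circ \pi_n = id_E$ and each $E_n$ is a complemented subspace of $E$ (with complementing projection $\iota_n\circ\pi_n$).

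For the \emph{only if} direction, suppose $E$ has the $\mathcal{I}$-approximation property. Since each $E_n$ is complemented in $E$, Proposition~\ref{complemented} immediately gives that each $E_n$ has the $\mathcal{I}$-approximation property; this half is essentially a corollary of the previous proposition.

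For the \emph{if} direction, assume each $E_1,\dots,E_k$ has the $\mathcal{I}$-approximation property and use the characterization in Proposition~\ref{propos}(d): it suffices to show that for a compact set $K \subseteq E$ and $\varepsilon > 0$ there is $T \in \mathcal{I}(E;E)$ with $\|T(x) - x\| < \varepsilon$ for all $x \in K$. The idea is that $\pi_n(K)$ is compact in $E_n$ for each $n$, so we may choose $S_n \in \mathcal{I}(E_n;E_n)$ with $\|S_n(y) - y\| < \varepsilon/k$ for all $y \in \pi_n(K)$. Then set $T := \sum_{n=1}^k \iota_n \circ S_n \circ \pi_n$. This $T$ lies in $\mathcal{I}(E;E)$: each summand is in $\mathcal{I}(E;E)$ by the ideal property (composition of the ideal operator $S_n$ with bounded operators on either side), and $\mathcal{I}(E;E)$ is closed under finite sums by property (a) of an operator ideal. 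For the estimate, write any $x \in K$ as $x = \sum_{n=1}^k \iota_n(\pi_n(x))$ and compute
\[
\|T(x) - x\| = \Bigl\| \sum_{n=1}^k \iota_n\bigl(S_n(\pi_n(x)) - \pi_n(x)\bigr) \Bigr\| \le \sum_{n=1}^k \|S_n(\pi_n(x)) - \pi_n(x)\| < k \cdot \frac{\varepsilon}{k} = \varepsilon,
\]
using that the norm on the direct sum dominates (up to an equivalent renorming, or with equality for the right choice of norm) the sum of the component norms — here one should fix the norm $\|x\| = \sum_n \|\pi_n(x)\|$ on $E$, or absorb the norm-equivalence constant into the choice of $\varepsilon/k$.

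The only mild subtlety — and the step to state carefully rather than a real obstacle — is the interplay between the norm on $\bigoplus_{n=1}^k E_n$ and the component norms in the final estimate; since all reasonable norms on a finite direct sum are equivalent and the $\mathcal{I}$-approximation property does not depend on the choice among equivalent norms, one simply picks the $\ell_1$-type norm (or inserts the equivalence constant when choosing the tolerance for each $S_n$). Everything else is a routine application of the ideal axioms and Proposition~\ref{propos}(d), so the proof is short.
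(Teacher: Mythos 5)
Your proof is correct and is exactly the elementary argument the paper has in mind (the authors omit the proof, remarking only that the ideal property is crucial — which is precisely where you invoke it, to place each $\iota_n\circ S_n\circ\pi_n$ in $\mathcal{I}(E;E)$). The one point you flag, the choice of norm on the direct sum, is handled correctly by fixing the $\ell_1$-type norm or absorbing the equivalence constant into the tolerance, so there is no gap.
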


\section{Duality}
In this section we study the dual properties of the ${\cal I}$-approximation property. Given an operator ideal $\cal I$ and Banach spaces $E$ and $F$, define
 $$\mathcal{I}^{dual}(E;F)=\{S\in \mathcal{L}(E;F) \,\,{\rm such \, that~the~adjoint~operator}\,\, S'\in \mathcal{I}(F';E')\}.$$
It is well known that $\mathcal{I}^{dual}$ is an operator ideal. By $J_E$ we mean the canonical embedding from $E$ to $E''$.

\begin{proposition}\label{1propdualIAP} Let ${\mathcal{I}}_1$ and ${\mathcal{I}}_2$ be
operator ideals. If $E'$ has ${\mathcal{I}}_2$-AP, $F$ is reflexive and ${\mathcal{I}}_2(F';E')\subseteq
{\mathcal{I}}_1^{dual}(F';E')$, then $\mathcal{L}(E;F)=\overline{\mathcal{I}_1(E;F)}^{\tau_c}$.
\end{proposition}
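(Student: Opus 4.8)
The plan is to prove the equivalent ``dual'' statement by a Hahn--Banach argument, exactly in the spirit of the proof of Proposition~\ref{charac}: the equality $\mathcal{L}(E;F)=\overline{\mathcal{I}_1(E;F)}^{\tau_c}$ holds if and only if every $\varphi\in(\mathcal{L}(E;F),\tau_c)'$ that vanishes on $\mathcal{I}_1(E;F)$ vanishes on all of $\mathcal{L}(E;F)$. So I would fix such a $\varphi$ and, by Grothendieck's description \cite{Grothendieck} of $(\mathcal{L}(E;F),\tau_c)'$, write $\varphi(W)=\sum_n y'_n(W(x_n))$ for all $W\in\mathcal{L}(E;F)$, where $(x_n)\subseteq E$, $(y'_n)\subseteq F'$ and $\sum_n\|x_n\|\,\|y'_n\|<\infty$.

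\textbf{Transport to $\mathcal{L}(F';E')$.} The next step is to move $\varphi$ to the space $\mathcal{L}(F';E')$, where the hypothesis on $E'$ can act. Since $F$ is reflexive, $J_F$ is an isometric isomorphism and the assignment $V\mapsto J_F^{-1}\circ V'\circ J_E$ is a bijection of $\mathcal{L}(F';E')$ onto $\mathcal{L}(E;F)$ with inverse $W\mapsto W'$ (one checks routinely that $(J_F^{-1}V'J_E)'=V$ and $J_F^{-1}W''J_E=W$); moreover, by $\mathcal{I}_2(F';E')\subseteq\mathcal{I}_1^{dual}(F';E')$ together with the ideal property, this bijection sends $\mathcal{I}_2(F';E')$ into $\mathcal{I}_1(E;F)$. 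I would then define $\psi\colon\mathcal{L}(F';E')\to\mathbb{K}$ by $\psi(V)=\sum_n (V(y'_n))(x_n)$. Since $J_Ex_n\in E''=(E')'$ and $\sum_n\|y'_n\|\,\|J_Ex_n\|=\sum_n\|y'_n\|\,\|x_n\|<\infty$, Grothendieck's description applied to $(\mathcal{L}(F';E'),\tau_c)$ shows that $\psi$ is $\tau_c$-continuous; and for $V\in\mathcal{I}_2(F';E')$, setting $W:=J_F^{-1}V'J_E\in\mathcal{I}_1(E;F)$ and using $W'=V$ gives $\psi(V)=\sum_n y'_n(W(x_n))=\varphi(W)=0$. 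Thus $\psi$ vanishes on $\mathcal{I}_2(F';E')$.

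\textbf{Conclusion and main obstacle.} Since $E'$ has the $\mathcal{I}_2$-approximation property, Proposition~\ref{charac}(c) yields $\mathcal{L}(F';E')=\overline{\mathcal{I}_2(F';E')}^{\tau_c}$, so the $\tau_c$-continuous functional $\psi$ vanishes identically. Feeding $V=W'$ back in then gives $\varphi(W)=\psi(W')=0$ for every $W\in\mathcal{L}(E;F)$, as required. The point I expect to be the crux is the decision to argue on the level of the $\tau_c$-continuous functionals rather than directly on operators: the naive route — $\tau_c$-approximate $T'$ by operators in $\mathcal{I}_2(F';E')$ and take adjoints — does not work, because $W\mapsto W'$ is \emph{not} a homeomorphism for the two compact-open topologies; indeed the seminorm $T\mapsto\sup_{x\in K}\|T(x)\|$ on $\mathcal{L}(E;F)$ becomes $\sup_{\|y'\|\le 1}\sup_{x\in K}|(T'(y'))(x)|$, a supremum over the whole unit ball of $F'$ rather than over a compact subset. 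Passing to the $\tau_c$-duals is exactly what repairs this mismatch, and reflexivity of $F$ is used precisely to make $W\mapsto W'$ surjective (so that $\psi\equiv0$ controls every $W\in\mathcal{L}(E;F)$) and to have $J_F^{-1}$ at hand for the ideal-property step.
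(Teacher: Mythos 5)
Your proof is correct and follows essentially the same route as the paper's: represent $\varphi$ via Grothendieck's description, transport it to the functional $\psi(V)=\sum_n J_E(x_n)(V(y'_n))$ on $(\mathcal{L}(F';E'),\tau_c)$, use $\mathcal{I}_2(F';E')\subseteq\mathcal{I}_1^{dual}(F';E')$ together with $V=(J_F^{-1}V'J_E)'$ to see that $\psi$ vanishes on $\mathcal{I}_2(F';E')$, and then invoke the $\mathcal{I}_2$-AP of $E'$ and Hahn--Banach to conclude $\psi\equiv 0$ and hence $\varphi\equiv 0$. The only (immaterial) difference is that the paper fixes one $V\in\mathcal{L}(E;F)$ and shows $\varphi(V)=0$, whereas you phrase the map $V\mapsto J_F^{-1}V'J_E$ explicitly as a bijection and conclude $\varphi$ vanishes on all of $\mathcal{L}(E;F)$ at once.
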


\begin{proof}
Let $V\in
\mathcal{L}(E;F)$ and let $\varphi \in (\mathcal{L}(E;F), \tau_c)'$ be
such that $\varphi(T)=0$ for every $T\in \mathcal{I}_1(E;F)$. It is enough to show that $\varphi(V)=0$, because in this case $V\in \overline{\mathcal{I}_1(E;F)}^{\tau_c}$ by \cite[Corollary
2.2.20]{Megginson}. Calling on Grothendieck's description of $(\mathcal{L}(E;F), \tau_c)'$ once more, there are
sequences $(x_n)\subseteq E$ and $(y'_n)\subseteq F'$ such that
$\sum_{n=1}^{\infty}\|y'_n\|\|x_n\|<\infty$ and
$\varphi(U)=\sum_{n=1}^{\infty}y'_n(U(x_n))$. Let $S\in \mathcal{I}_2(F';E')$. By assumption we have that $S'\in
\mathcal{I}_1(E'';F'')$. From the reflexivity of $F$ we may define
$R:=(J_F)^{-1}\circ S'\circ J_E \in \mathcal{I}_1(E;F)$. For every
$u\in F''$ and $v\in F'$,
$$\langle u, v \rangle= \langle J_F((J_F)^{-1}(u)),v \rangle= \langle v, (J_F)^{-1}(u) \rangle.
$$
Observe that $\phi(\cdot)= \sum_n J_E(x_n)(\cdot)y'_n \in
(\mathcal{L}(F';E'), \tau_c)'$ and
\begin{eqnarray*}
\phi(S)&=& \sum_{n=1}^{\infty}
J_E(x_n)(S)y'_n=\sum_{n=1}^{\infty}J_E(x_n)(S(y'_n))
=\sum_{n=1}^{\infty}\langle J_E(x_n),S(y'_n) \rangle\\
&=&\sum_{n=1}^{\infty} \langle S'\circ J_E(x_n),y'_n \rangle =
\sum_{n=1}^{\infty} \langle y'_n, (J_F)^{-1}\circ S'\circ J_E(x_n) \rangle\\
&=&\sum_{n=1}^{\infty} \langle y'_n,R(x_n) \rangle=
\sum_{n=1}^{\infty}y'_n(R(x_n))=0.
\end{eqnarray*}
So $\phi(S) = 0$ for every $S\in \mathcal{I}_2(F';E').$ Since $E'$ has
${\cal{I}}_2$-AP, $\mathcal{L}(F';E')=\overline{\mathcal{I}_2(F';E')}^{\tau_c}$ by Proposition \ref{charac}.
Therefore \cite[Corollary 2.2.20]{Megginson} yields $\phi(V')=0$. Thus \begin{eqnarray*}0&=& \phi(V')
=\sum_{n=1}^{\infty}J_E(x_n)(V'(y'_n))
=\sum_{n=1}^{\infty}\langle J_E(x_n),V'(y'_n) \rangle\\
&=&\sum_{n=1}^{\infty} \langle V'(y'_n),x_n\rangle =\sum_{n=1}^{\infty} \langle y'_n,V(x_n) \rangle= \varphi(V).
\end{eqnarray*}
The proof is complete.
\end{proof}

\begin{theorem} \label{propdualIAP} Let ${\mathcal{I}}_1$ and ${\mathcal{I}}_2$ be operator ideals such that either
 ${\mathcal{I}}_2\subseteq {\mathcal{I}}_1^{dual}$ or ${\mathcal{I}}_2^{dual}\subseteq {\mathcal{I}}_1$ and $E$ be a reflexive Banach
space.\\
{\rm (a)} If $E'$ has ${\mathcal{I}}_2$-AP then $E$ has ${\mathcal{I}}_1$-AP.\\
{\rm (b)} If $E$ has ${\mathcal{I}}_2$-AP then $E'$ has
${\mathcal{I}}_1$-AP.
\end{theorem}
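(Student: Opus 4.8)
The theorem should follow from Proposition~\ref{1propdualIAP} together with the duality relation between $\cal I$ and ${\cal I}^{dual}$ for reflexive spaces. The key general fact I would invoke is this: for a reflexive Banach space $X$, an operator $S\in{\cal L}(X;Y)$ belongs to ${\cal I}^{dual}(X;Y)$ precisely when $S'\in{\cal I}(Y';X')$, and conversely $S'\in{\cal I}^{dual}(Y';X')$ iff $S''\in{\cal I}(X'';Y'')$, which (modulo the canonical identifications $J_X,J_Y$) just says $S\in{\cal I}(X;Y)$ when $X$ is reflexive. So on reflexive spaces, $({\cal I}^{dual})^{dual}$ restricted to those spaces agrees with $\cal I$ up to the canonical isometries. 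This is exactly the bookkeeping already carried out inside the proof of Proposition~\ref{1propdualIAP} (the construction $R=(J_F)^{-1}\circ S'\circ J_E$), so I would lean on that.

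\textbf{Part (a).} Assume $E'$ has ${\cal I}_2$-AP and $E$ is reflexive; I want $E$ has ${\cal I}_1$-AP, i.e., ${\cal L}(E;E)=\overline{{\cal I}_1(E;E)}^{\tau_c}$. I would apply Proposition~\ref{1propdualIAP} with the roles $E\mapsto E$, $F\mapsto E$ (legitimate since $E$ is reflexive): the hypotheses needed are that $E'$ has ${\cal I}_2$-AP (given), $F=E$ is reflexive (given), and ${\cal I}_2(E';E')\subseteq{\cal I}_1^{dual}(E';E')$. The last inclusion is exactly the case $X=E'$, $Y=E'$ of the ideal inclusion ${\cal I}_2\subseteq{\cal I}_1^{dual}$. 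If instead one is handed ${\cal I}_2^{dual}\subseteq{\cal I}_1$, I would pass to duals: for $S\in{\cal I}_2(E';E')$, reflexivity of $E$ gives that the operator $(J_{E'})^{-1}\circ S'\circ J_{E'}$ (an operator $E'\to E'$ corresponding to $S'$ via the canonical identifications $E''=E$, hence $E'''=E'$) lies in ${\cal I}_2^{dual}(E';E')\subseteq{\cal I}_1(E';E')$, and unravelling the identifications shows $S\in{\cal I}_1^{dual}(E';E')$. Either way the hypothesis of Proposition~\ref{1propdualIAP} holds, and it delivers ${\cal L}(E;E)=\overline{{\cal I}_1(E;E)}^{\tau_c}$, which by Proposition~\ref{propos}(a)$\Leftrightarrow$(c) is the ${\cal I}_1$-AP for $E$.

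\textbf{Part (b).} Assume $E$ has ${\cal I}_2$-AP; I want $E'$ has ${\cal I}_1$-AP. Since $E$ is reflexive, $E'$ is reflexive too, and $E''=E$ (canonically) has ${\cal I}_2$-AP. Now apply Proposition~\ref{1propdualIAP} with $E\mapsto E'$ and $F\mapsto E'$: the conclusion ${\cal L}(E';E')=\overline{{\cal I}_1(E';E')}^{\tau_c}$ requires that $(E')'=E''\cong E$ has ${\cal I}_2$-AP (true), that $F=E'$ is reflexive (true), and that ${\cal I}_2((E')';(E')')\subseteq{\cal I}_1^{dual}((E')';(E')')$, i.e.\ ${\cal I}_2(E;E)\subseteq{\cal I}_1^{dual}(E;E)$ after identifying $E''$ with $E$; this is again the $X=Y=E$ instance of ${\cal I}_2\subseteq{\cal I}_1^{dual}$ (or, under the alternative hypothesis ${\cal I}_2^{dual}\subseteq{\cal I}_1$, the same dualisation trick as in part (a)). So Proposition~\ref{1propdualIAP} gives ${\cal L}(E';E')=\overline{{\cal I}_1(E';E')}^{\tau_c}$, i.e.\ $E'$ has ${\cal I}_1$-AP.

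\textbf{Main obstacle.} The only delicate point is the careful handling of the canonical isometries $J_E$, $J_{E'}$ and the identification $E''=E$ when translating between ${\cal I}_2\subseteq{\cal I}_1^{dual}$ and ${\cal I}_2^{dual}\subseteq{\cal I}_1$; one must check that applying the ideal property to compositions with $J_E^{\pm1}$ keeps operators inside the relevant components, exactly as in the computation of $R$ inside the proof of Proposition~\ref{1propdualIAP}. Everything else is a direct specialization of that proposition, choosing $F=E$ in part (a) and $F=E'$ in part (b).
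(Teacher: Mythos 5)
Your argument is essentially the paper's own proof: part (a) is Proposition~\ref{1propdualIAP} applied with $F=E$ once one checks ${\cal I}_2(E';E')\subseteq{\cal I}_1^{dual}(E';E')$ under either hypothesis, and part (b) is the same statement with $E$ replaced by the reflexive space $E'$ (the paper phrases it as applying (a) to $E'$ after noting $E''\cong E$ has ${\cal I}_2$-AP, which amounts to your choice $F=E'$). One bookkeeping slip in your treatment of the case ${\cal I}_2^{dual}\subseteq{\cal I}_1$: for $S\in{\cal I}_2(E';E')$ the adjoint $S'$ acts on $E''$, so the auxiliary operator must be $(J_E)^{-1}\circ S'\circ J_E\colon E\to E$ (whose adjoint is $S$ by reflexivity of $E$), not $(J_{E'})^{-1}\circ S'\circ J_{E'}\colon E'\to E'$ --- as written that composition does not typecheck, since $J_{E'}$ maps $E'$ into $E'''$ while $S'$ is defined on $E''$. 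With that correction the chain $(J_E)^{-1}\circ S'\circ J_E\in{\cal I}_2^{dual}(E;E)\subseteq{\cal I}_1(E;E)$, hence $S'=J_E\circ\bigl((J_E)^{-1}\circ S'\circ J_E\bigr)\circ (J_E)^{-1}\in{\cal I}_1(E'';E'')$ by the ideal property and so $S\in{\cal I}_1^{dual}(E';E')$, is exactly the computation the paper carries out.
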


\begin{proof}Assume that ${\mathcal{I}}_2^{dual}\subseteq {\mathcal{I}}_1$.
Let $u \in {\cal I}_2(E';E')$. Since $E$ is reflexive, $((J_E)^{-1}
\circ u' \circ J_E)' = u \in {\cal I}_2(E';E')$, hence $(J_E)^{-1}
\circ u' \circ J_E \in \mathcal{I}_2^{dual}(E;E)$. By assumption we
have $(J_E)^{-1} \circ u' \circ J_E \in \mathcal{I}_1(E;E)$. Then
$u' = J \circ (J_E)^{-1} \circ u' \circ J_E \circ (J_E)^{-1} \in
\mathcal{I}_1(E;E)$ by the ideal property, that is, $ u \in
\mathcal{I}_1^{dual}(E';E')$. We have just proved that
$\mathcal{I}_2(E';E')\subseteq \mathcal{I}_1^{dual}(E';E')$. Since
this condition holds trivially if $\mathcal{I}_2\subseteq
\mathcal{I}_1^{dual}$, in both cases we have
$\mathcal{I}_2(E';E')\subseteq
\mathcal{I}_1^{dual}(E';E')$.\\
\indent (a) Suppose that $E'$ has ${\cal{I}}_2$-AP. Since $E$ is
reflexive, we have $\mathcal{L}(E;E)=\overline{\mathcal{I}_1(E;E)}^{\tau_c}$ from Proposition \ref{1propdualIAP}, so $E$
has ${\cal{I}}_1$-AP.

(b) Suppose that $E$ has ${\cal{I}}_2$-AP. Since $E$
and $E''$ are isometrically isomorphic, it follows that $E''$ has
${\cal{I}}_2$-AP. Hence $E'$ has ${\cal{I}}_1$-AP by (a).
\end{proof}

\begin{corollary} \label{dualIAP}
Let $\mathcal{I}$ be an operator ideal such that
either $\mathcal{I}\subseteq \mathcal{I}^{dual}$ or
$\mathcal{I}^{dual}\subseteq \mathcal{I}$ and $E$ be a reflexive
Banach space. Then $E'$ has the $\mathcal{I}$-approximation property
if and only if $E$ has the $\mathcal{I}$-approximation property.
\end{corollary}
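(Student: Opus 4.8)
The plan is to derive Corollary \ref{dualIAP} directly from Theorem \ref{propdualIAP} by specializing to the case ${\cal I}_1 = {\cal I}_2 = {\cal I}$. The hypothesis of the corollary is exactly the hypothesis of the theorem in this case: if ${\cal I} \subseteq {\cal I}^{dual}$ then taking ${\cal I}_1 = {\cal I}_2 = {\cal I}$ gives ${\cal I}_2 \subseteq {\cal I}_1^{dual}$, and if ${\cal I}^{dual} \subseteq {\cal I}$ then it gives ${\cal I}_2^{dual} \subseteq {\cal I}_1$; either way the structural assumption of Theorem \ref{propdualIAP} is met. Since $E$ is assumed reflexive, the theorem applies.

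For the forward implication, I would assume $E'$ has the ${\cal I}$-AP; then part (a) of Theorem \ref{propdualIAP} (with ${\cal I}_1 = {\cal I}_2 = {\cal I}$) immediately gives that $E$ has the ${\cal I}$-AP. For the reverse implication, I would assume $E$ has the ${\cal I}$-AP; then part (b) of Theorem \ref{propdualIAP} (again with both ideals equal to ${\cal I}$) gives that $E'$ has the ${\cal I}$-AP. That closes the biconditional.

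I do not anticipate any genuine obstacle here: the corollary is a straightforward diagonal specialization of the theorem, and the only thing to check is that the disjunctive hypothesis of the corollary correctly matches the disjunctive hypothesis of the theorem under the identification ${\cal I}_1 = {\cal I}_2$. One subtlety worth stating explicitly in the write-up is that the biconditional needs both directions of the theorem, so I would be careful to invoke (a) and (b) separately rather than claiming a single symmetric statement. The proof is therefore a few lines.

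\begin{proof}
Apply Theorem \ref{propdualIAP} with ${\cal I}_1 = {\cal I}_2 = {\cal I}$. The hypothesis ``$\mathcal{I}\subseteq \mathcal{I}^{dual}$ or $\mathcal{I}^{dual}\subseteq \mathcal{I}$'' means precisely that ``${\cal I}_2 \subseteq {\cal I}_1^{dual}$ or ${\cal I}_2^{dual} \subseteq {\cal I}_1$'' holds for this choice, and $E$ is reflexive by assumption. If $E'$ has the $\mathcal{I}$-approximation property, then $E$ has the $\mathcal{I}$-approximation property by part (a) of the theorem. Conversely, if $E$ has the $\mathcal{I}$-approximation property, then $E'$ has the $\mathcal{I}$-approximation property by part (b) of the theorem. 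This proves the equivalence.
\end{proof}
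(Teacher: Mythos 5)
Your proposal is correct and is exactly the argument the paper intends: the corollary is the diagonal case ${\cal I}_1 = {\cal I}_2 = {\cal I}$ of Theorem \ref{propdualIAP}, with parts (a) and (b) giving the two directions of the equivalence. Nothing further is needed.
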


Given $1 \leq p < \infty$, $p^*$ stands for the conjugate of $p$, that is $\frac1p + \frac{1}{p^*}=1$. For the definition of the adjoint ideal ${\cal I}^*$ of the operator ideal $\cal I$, see, e.g., \cite[p. 132]{DJT}.

\begin{example}\rm Let us see that there is plenty of ideals satisfying the conditions of Theorem \ref{propdualIAP} and Corollary \ref{dualIAP}.\\
(i) ${\cal N}_1^{dual} \subseteq {\cal J}$ \cite[Ex. 16.9]{klaus}, ${\cal SS}^{dual} \subseteq {\cal SC}$ and ${\cal SC}^{dual} \subseteq {\cal SS}$ \cite[1.18]{DJP}, $\Gamma_p^{dual} = \Gamma_{p^*}$ \cite[p. 186]{DJT}, $\Pi_1^{dual} = \Gamma_1^*$ \cite[Corollary 9.5]{DJT}, ${\cal T}_p \subseteq {\cal C}_{p^*}^{dual}$ and ${\cal C}_{p^*} \circ {\cal KC} \subseteq {\cal T}_p^{dual}$ for $1 < p \leq 2$ \cite[31.2]{klaus}, ${\cal N}_1^{dual} \subseteq {\cal QN}$ \cite[Ex. 9.13(b)]{klaus}, $\Pi_{r,p,q}^{dual} = \Pi_{r,q,p}$ \cite[Theorem 17.1.5]{Pietsch}, ${\cal L}_{p,q}^{dual} = {\cal L}_{q,p}$ \cite[p. 68]{cdr}, ${\cal K}_p = {\cal QN}^{dual}_p$ \cite{delgadostudia}.\\
(ii) The following ideals are completely symmetric (that is $\mathcal{I}= \mathcal{I}^{dual}$): ${\cal F}, {\cal A}, {\cal K}, {\cal W}$ \cite[Proposition 4.4.7]{Pietsch}, $\cal J$ \cite[Corollary 10.2.2]{klaus}, $\cal SN$ \cite[Theorem 19.9.3]{jarchow}, ${\cal U}_p$, $0<p<\infty$ \cite[Theorem 14.2.5]{Pietsch} and $\cal KC$ \cite[31.1]{klaus}.\\
(iii) The following ideals satisfy $\mathcal{I}\subseteq \mathcal{I}^{dual}$: ${\cal N}_1$ \cite[9.9]{klaus} and $\cal D$ \cite[Proposition 4.4.10]{Pietsch}.\\
(iv) The following ideals satisfy $\mathcal{I}^{dual}\subseteq \mathcal{I}$: $\cal S$ \cite[Proposition 4.4.8]{Pietsch} and ${\cal DP}$ \cite[1.15]{DJP}.
\end{example}

Our next aim is to show that the implication $E'$ has $\cal{I}-$AP $\Longrightarrow$ $E$ has $\cal{I}$-AP holds in some situations not covered by Corollary \ref{dualIAP}. A couple of concepts defined in \cite{choi-kim-outro} are needed:

\begin{definition}\rm Let $E$ be a Banach space. Consider in $\mathcal{L}(E;E)$ the topology, called $\nu$, for which a net $(T_\alpha)$ in
$\mathcal{L}(E;E)$ converges to $T \in \mathcal{L}(E;E)$ if and only if $$\sum_{n=1}^{\infty}x'_n(T_{\alpha}(x_n)) \longrightarrow \sum_{n=1}^{\infty}x'_n(T(x_n))$$
for every $(x_n)\subseteq E$ and $(x^{'}_n)\subseteq E^{'}$ satisfying
$\sum_{n=1}^{\infty}\|x'_n\|\|x_n\|<\infty$. In this case we write $T_\alpha \stackrel{\nu}\longrightarrow T$. It is immediate that the $\tau_c$-topology is
stronger than the $\nu$-topology on $\mathcal{L}(E;E)$.\\
\indent The {\it $weak^{*}$-topology} on $\mathcal{L}(E';E')$ is the topology for which a net $(T_\alpha)$ in $\mathcal{L}(E';E')$ converges to $T \in \mathcal{L}(E';E')$ if and only if
$$\sum_{n=1}^{\infty}(T_{\alpha}(x'_n))x_n \longrightarrow \sum_{n=1}^{\infty}(T (x'_n))x_n$$
for every $(x_n)\subseteq E$ and $(x^{'}_n)\subseteq E^{'}$ satisfying
$\sum_{n=1}^{\infty}\|x'_n\|\|x_n\|<\infty$. In this case we write $T_\alpha \stackrel{weak^{*}} \longrightarrow  T$.\\
\indent The topology $\nu$ is stronger than the $weak^{*}$-topology on $\mathcal{L}(E';E')$. Moreover, for $T$ and a net
$(T_\alpha)$ in $\mathcal{L}(E;E)$,
\begin{equation}\label{equation}T_\alpha \stackrel{\nu} \longrightarrow  T \Longleftrightarrow T'_\alpha \stackrel{weak^{*}} \longrightarrow  T'.
\end{equation}

Given a Banach space $E$, be $w^*$ we mean the ordinary weak* topology on $E'$. For a given operator ideal $\cal I$, by ${\cal I}_{w^*}(E';E')$ we denote the set of all operators belonging to ${\cal I}(E';E')$ which are $w^*$-to-$w^*$ continuous. 
The dual space $E'$ is said to have the {\it weak* density} for $\cal{I}$ (in short, $E$ has $\cal{I}$-W*D) if
$$\mathcal{I}(E';E')\subseteq
\overline{\mathcal{I}_{w^*}(E';E')}^{weak^*}.$$
\end{definition}

\begin{example}\rm There are nonreflexive dual Banach spaces having $\cal{I}$-W*D for every operator ideal $\cal I$. In \cite[Proposition 2.7(a)]{choi-kim-outro} it is proved that $\ell_1$ has $\cal{K}$-W*D. The only feature of compact operators used in the proof is the ideal property, so the same lines prove that $\ell_1 = (c_0)'$ is a nonreflexive dual Banach space having $\cal{I}$-W*D for every operator ideal $\cal I$.
\end{example}

So, formally Corollary \ref{dualIAP} does not apply to dual spaces having $\cal{I}$-W*D. In this direction we have:




\begin{proposition} Let $E$ be a Banach space and let $\mathcal{I}$ be an operator ideal such that $ \mathcal{I}^{dual} \subseteq \mathcal{I}$.
If $E'$ has $\cal{I}$-AP and $\cal{I}$-W*D, then $E$ has $\cal{I}$-AP.
\end{proposition}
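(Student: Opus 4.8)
The goal is to show $id_E \in \overline{{\cal I}(E;E)}^{\tau_c}$, and by Proposition \ref{charac} it is equivalent (via condition (d), or directly via Grothendieck's description and Hahn-Banach) to checking that every $\varphi \in ({\cal L}(E;E),\tau_c)'$ vanishing on ${\cal I}(E;E)$ also vanishes on $id_E$. So the plan is to fix such a $\varphi$, write it via Grothendieck as $\varphi(T) = \sum_n x'_n(T(x_n))$ with $(x_n)\subseteq E$, $(x'_n)\subseteq E'$ and $\sum_n\|x'_n\|\|x_n\|<\infty$, and build from it a companion functional $\psi$ on ${\cal L}(E';E')$; then transport the hypothesis ``$E'$ has $\cal I$-AP'' through the W*D property to conclude $\psi(id_{E'})=0$, which unwinds to $\varphi(id_E)=0$.

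\textbf{Key steps, in order.} First I would define $\psi\colon {\cal L}(E';E') \to \mathbb{K}$ by $\psi(S) = \sum_n (S(x'_n))(x_n)$; this is exactly the kind of functional appearing in the definition of the $weak^*$-topology, so $\psi$ is $weak^*$-continuous on ${\cal L}(E';E')$, and in particular it is $\tau_c$-continuous there. Second, I would show $\psi$ vanishes on ${\cal I}_{w^*}(E';E')$: given $S \in {\cal I}_{w^*}(E';E')$ that is $w^*$-to-$w^*$ continuous, $S$ is the adjoint of some $T \in {\cal L}(E;E)$, and since $\mathcal{I}^{dual}\subseteq \mathcal{I}$ we get $T = (J_E)^{-1}\circ S' \circ J_E \in \mathcal{I}^{dual}(E;E) \subseteq {\cal I}(E;E)$ — wait, more carefully: $S=T'$ with $S'$ essentially $T''$, so $T\in\mathcal{I}^{dual}(E;E)\subseteq\mathcal{I}(E;E)$; then $\psi(S)=\sum_n (T'(x'_n))(x_n)=\sum_n x'_n(T(x_n))=\varphi(T)=0$. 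Third, invoke $\cal I$-W*D: ${\cal I}(E';E')\subseteq \overline{{\cal I}_{w^*}(E';E')}^{weak^*}$, and since $\psi$ is $weak^*$-continuous and vanishes on ${\cal I}_{w^*}(E';E')$, it vanishes on all of ${\cal I}(E';E')$. Fourth, use that $E'$ has $\cal I$-AP: by Proposition \ref{charac}, $id_{E'} \in \overline{{\cal I}(E';E')}^{\tau_c}$; since $\psi$ is $\tau_c$-continuous on ${\cal L}(E';E')$ and kills ${\cal I}(E';E')$, we get $\psi(id_{E'}) = 0$. Finally, $0 = \psi(id_{E'}) = \sum_n (id_{E'}(x'_n))(x_n) = \sum_n x'_n(x_n) = \varphi(id_E)$, which is what we needed; Hahn--Banach (\cite[Corollary 2.2.20]{Megginson}) then gives $id_E \in \overline{{\cal I}(E;E)}^{\tau_c}$.

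\textbf{Main obstacle.} The delicate point is Step 2 — correctly matching a $w^*$-to-$w^*$ continuous operator $S\in{\cal I}_{w^*}(E';E')$ with a pre-adjoint $T\in{\cal L}(E;E)$ and then landing $T$ inside ${\cal I}(E;E)$ using only $\mathcal{I}^{dual}\subseteq\mathcal{I}$ together with the ideal property; one has to be careful that $S = T'$ forces $S' \circ J_E = J_E \circ T$ (equivalently $T = (J_E)^{-1}\circ S'\circ J_E$ after restricting the range appropriately), so that $T' = S \in {\cal I}(E';E')$, i.e. $T \in \mathcal{I}^{dual}(E;E)$, and then $\mathcal{I}^{dual}\subseteq\mathcal{I}$ finishes it. A secondary routine check is that the two functionals $\varphi$ and $\psi$ genuinely correspond under adjunction on the relevant operators, i.e. $\psi(T') = \varphi(T)$ for all $T\in{\cal L}(E;E)$, which is just the duality pairing $(T'(x'_n))(x_n) = x'_n(T(x_n))$ summed over $n$, the series converging absolutely by the Grothendieck bound $\sum_n\|x'_n\|\|x_n\|<\infty$. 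Everything else is bookkeeping with continuity of the various topologies and the Hahn--Banach separation argument already used repeatedly in the paper.
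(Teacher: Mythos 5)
Your proof is correct and follows essentially the same route as the paper's: both use the W*D hypothesis to reduce to $w^*$-to-$w^*$ continuous operators, take pre-adjoints and apply $\mathcal{I}^{dual}\subseteq\mathcal{I}$ to land in $\mathcal{I}(E;E)$, and then pair against the Grothendieck sequences to get $\sum_n x'_n(x_n)=0$. The only cosmetic difference is that the paper runs the limit through a net $S_\alpha\stackrel{weak^*}{\longrightarrow} id_{E'}$ with $T_\alpha'=S_\alpha$ and the $\nu$-topology, whereas you package the same computation into the single $weak^*$- and $\tau_c$-continuous functional $\psi(S)=\sum_n (S(x'_n))(x_n)$ and invoke Hahn--Banach.
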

\begin{proof}
Let  $(x_n)\subseteq E$ and $(x^{'}_n)\subseteq E^{'}$ be sequences
such that $\sum_{n=1}^{\infty}\|x'_n\|\|x_n\|<\infty$ and
$\sum_{n=1}^{\infty}x'_n(T(x_n))=0$ for every $T\in
\mathcal{I}(E;E)$. We know that $id_{E'}\in \overline{\mathcal{I}(E';E')}^{\tau_c}$ because $E'$ has $\cal{I}$-AP, and that $\mathcal{I}(E';E')\subseteq
\overline{\mathcal{I}_{w^*}(E';E')}^{weak^*}$ because $E$ has $\cal{I}-W^*D$. Thus $id_{E'}\in \overline{\mathcal{I}_{w^*}(E';E')}^{weak^*}$ and then
there is a net $(S_{\alpha})\subseteq \mathcal{I}_{w^*}(E';E')$
such that $S_{\alpha} \stackrel{weak^*}\longrightarrow id_{E'}$. For each $\alpha$, since $S_\alpha$ is $w^*$-to-$w^*$ continuous, there is $T_\alpha \in {\cal L}(E;E)$ such that $T_\alpha ' = S_\alpha$ (see \cite[Ex.\,\,3.20]{varios}). We know that $S_\alpha \in \mathcal{I}(E';E')$, so the condition $ \mathcal{I}^{dual} \subseteq \mathcal{I}$ yields that $T_\alpha  \in {\cal I}(E;E)$ for every $\alpha$.
 From (\ref{equation}) and
$$T_{\alpha}' = S_\alpha
\stackrel{weak^*}\longrightarrow id_{E'} = (id_E)'$$
we get $T_{\alpha}
\stackrel{\nu}\longrightarrow id_{E}$. So, by the definition of the $\nu$-convergence,
$$\sum_{n=1}^{\infty}x'_n(T_{\alpha}(x_n)) \longrightarrow
\sum_{n=1}^{\infty}x'_n(id_E(x_n)) = \sum_{n=1}^{\infty}x'_n(x_n) .$$
But
$\sum_{n=1}^{\infty}x'_n(T_{\alpha}(x_n))=0$ for every $\alpha$ because each $T_\alpha  \in {\cal I}(E;E)$, therefore
$\sum_{n=1}^{\infty}x'_n(x_n)=0$. By Proposition \ref{charac} it follows that $E$
has $\cal{I}$-AP.
\end{proof}

}
\section{Tensor stability}

In this section we study the stability of $\cal I$-AP under the formation of projective tensor products. By $E_1 \hat\otimes_\pi \cdots \hat\otimes_\pi E_n$ we mean the completed projective tensor product of $E_1, \ldots, E_n$ ($\hat\otimes_\pi^n E$ if $E = E_1 = \cdots = E_n$), and by $\hat\otimes_\pi^{n,s}E$ the completed $n$-fold symmetric projective tensor product of $E$.
\begin{definition}\rm Given continuous linear operators $u_j \colon E_j \longrightarrow F_j$, $j = 1, \ldots, n$, by $u_1 \otimes \cdots\otimes u_n$ we denote the (unique) continuous linear operator from $E_1 \hat\otimes_\pi \cdots \hat\otimes_\pi E_n$ to $F_1 \hat\otimes_\pi \cdots \hat\otimes_\pi F_n$ such that
$$u_1 \otimes \cdots\otimes u_n(x_1 \otimes \cdots \otimes x_n) = u_1(x_1) \otimes \cdots \otimes u_n(x_n) $$
for every $x_1 \in E_1, \ldots, x_n \in E_n$.
\end{definition}
The proof of the stability of the approximation property with respect to the formation of projective tensor products relies heavily on the fact that $u_1 \otimes \cdots\otimes u_n$ is a finite rank operator whenever $u_1, \ldots, u_n$ are finite rank operators. Let us see that this does not hold for arbitrary operator ideals:
\begin{example}\rm The identity operator $id_{\ell_2}$ is weakly compact but $id_{\ell_2} \otimes id_{\ell_2} = id_{\ell_2 \hat\otimes_\pi \ell_2}$ is not weakly compact because $\ell_2 \hat\otimes_\pi \ell_2$ fails to be reflexive.
\end{example}
In order to settle this difficulty we need the following methods of generating ideals of multilinear mappings from operator ideals. By ${\cal L}(E_1, \ldots, E_n;F)$ we denote the space of continous $n$-linear mappings from $E_1 \times \cdots \times E_n$ to $F$ endowed with the usual sup norm.
\begin{definition}\rm Let $\mathcal{I}, \mathcal{I}_1, \ldots, \mathcal{I}_n $ be operator ideals.\\
(a) (Factorization Method) A continuous $n$-linear mapping $A\in \mathcal{L}(E_1, \ldots, E_n;F)$
  is said to be of type  $\mathcal{L}[\mathcal{I}_1, \ldots, \mathcal{I}_n]$ if
  there are Banach spaces $G_1, \ldots, G_n$, linear operators  $u_j\in
\mathcal{I}_j(E_j;G_j)$, $j = 1, \ldots, n$, and an $n$-linear mapping $B\in \mathcal{L}(G_1, \ldots, G_n;F)$ such that $A=B\circ (u_1, \ldots, u_n)$. In this case we write $A \in {\mathcal{L}[\mathcal{I}_1, \ldots, \mathcal{I}_n]}(E_1, \ldots, E_n;F)$. If $\mathcal{I} = \mathcal{I}_1 = \cdots = \mathcal{I}_n $ we simply write $\mathcal{L}[\mathcal{I}]$.\\
(b) (Composition ideals) A continuous
$n$-linear mapping $A\in \mathcal{L}(E_1, \ldots, E_n;F)$ belongs to
$\mathcal{I}\circ \mathcal{P}$ if there are a Banach space
$G$, an $n$-linear mapping $B\in \mathcal{L}(E_1, \ldots, E_n;G)$ and a linear operator
$u\in \mathcal{I}(G;F)$ such that $A =u\circ B$. In this case we write $A\in
\mathcal{I}\circ\mathcal{L}(E_1, \ldots, E_n;F)$.\\
\indent For details and examples we refer to \cite{Botelho1,Botelho0}.
\end{definition}
\begin{proposition}\label{prodtens} Let $\mathcal{I}, \mathcal{I}_1, \ldots, \mathcal{I}_n $ be operator ideals such that $\mathcal{L}[\mathcal{I}_1, \ldots, \mathcal{I}_n] \subseteq \mathcal{I}\circ\mathcal{L}$. If $E_1$ has $\mathcal{I}_1$-AP, $\ldots$, $E_n$ has $\mathcal{I}_n$-AP, then $E_1 \hat\otimes_\pi \cdots \hat\otimes_\pi E_n$ has $\cal I$-AP.
\end{proposition}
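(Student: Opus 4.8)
The plan is to use the characterization of the $\mathcal{I}$-AP given in Proposition~\ref{propos}(d): it suffices to show that for every compact set $K \subseteq E_1 \hat\otimes_\pi \cdots \hat\otimes_\pi E_n$ and every $\varepsilon > 0$ there is an operator $T \in \mathcal{I}(E;E)$, where $E := E_1 \hat\otimes_\pi \cdots \hat\otimes_\pi E_n$, with $\|T(z) - z\| < \varepsilon$ for all $z \in K$. The natural candidate for $T$ is a tensor product $u_1 \otimes \cdots \otimes u_n$ of suitable operators $u_j \in \mathcal{I}_j(E_j;E_j)$ coming from the hypothesis that each $E_j$ has $\mathcal{I}_j$-AP. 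The two things to check are: (i) that such a tensor product is close to the identity on the given compact set $K$; and (ii) that it actually belongs to $\mathcal{I}(E;E)$, which is precisely where the hypothesis $\mathcal{L}[\mathcal{I}_1,\ldots,\mathcal{I}_n] \subseteq \mathcal{I}\circ\mathcal{L}$ enters.

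First I would handle the membership (ii), which is the conceptual heart. Given $u_j \in \mathcal{I}_j(E_j;E_j)$, consider the $n$-linear map $B \colon E_1 \times \cdots \times E_n \to E$ defined by $B(x_1,\ldots,x_n) = u_1(x_1)\otimes \cdots \otimes u_n(x_n)$; equivalently $B = \sigma \circ (u_1,\ldots,u_n)$ where $\sigma \colon E_1 \times \cdots \times E_n \to E$ is the canonical (bounded, multilinear) map $\sigma(x_1,\ldots,x_n) = x_1 \otimes \cdots \otimes x_n$. Thus $B$ is of type $\mathcal{L}[\mathcal{I}_1,\ldots,\mathcal{I}_n]$, so by hypothesis $B \in \mathcal{I}\circ\mathcal{L}(E_1,\ldots,E_n;E)$, i.e.\ $B = v \circ C$ for some Banach space $G$, some $C \in \mathcal{L}(E_1,\ldots,E_n;G)$ and some $v \in \mathcal{I}(G;E)$. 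By the universal property of the projective tensor norm, $B$ linearizes to a bounded linear map $\widehat{B} = u_1 \otimes \cdots \otimes u_n$ on $E$, and $C$ linearizes to $\widehat{C} \in \mathcal{L}(E;G)$, with $\widehat{B} = v \circ \widehat{C}$. Since $v \in \mathcal{I}$ and $\mathcal{I}$ is an operator ideal, $u_1 \otimes \cdots \otimes u_n = v \circ \widehat{C} \in \mathcal{I}(E;E)$. (One should double-check the linearization step for composition ideals — i.e.\ that a factorization of the multilinear map through an operator ideal descends to a factorization of its linearization — but this is exactly the kind of fact recorded in \cite{Botelho1,Botelho0}.)

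Next, for (i), I would use the standard density argument for projective tensor products. A compact set $K \subseteq E$ is contained in the closed absolutely convex hull of a null sequence in $E$; each element of $E$ is a sum $\sum_k \lambda_k\, x_1^{(k)} \otimes \cdots \otimes x_n^{(k)}$ with $\sum_k |\lambda_k| < \infty$ and bounded factors, and truncating plus approximating reduces matters to a finite set of elementary tensors $x_1^{(k)} \otimes \cdots \otimes x_n^{(k)}$, $k = 1, \ldots, m$, lying in prescribed compact (indeed finite) subsets $K_j \subseteq E_j$. By Proposition~\ref{propos}(d) applied in each $E_j$, choose $u_j \in \mathcal{I}_j(E_j;E_j)$ with $\|u_j(x) - x\|$ uniformly small on $K_j \cup \{0\}$ and with $\|u_j\|$ controlled (the ideal hypothesis already gives approximation; uniform boundedness of the chosen $u_j$ on the relevant compact sets can be arranged, or absorbed via the triangle inequality). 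Then a telescoping estimate
\begin{align*}
\|u_1(x_1)\otimes\cdots\otimes u_n(x_n) - x_1\otimes\cdots\otimes x_n\|
\le \sum_{j=1}^{n} \Big(\prod_{i<j}\|u_i\|\Big)\,\|u_j(x_j)-x_j\|\,\Big(\prod_{i>j}\|x_i\|\Big)
\end{align*}
shows that $u_1\otimes\cdots\otimes u_n$ is uniformly close to $\mathrm{id}_E$ on the finitely many elementary tensors, hence on all of $K$ after undoing the reductions.

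\textbf{Main obstacle.} The routine part is the telescoping estimate and the reduction of a general compact set in the projective tensor product to finitely many elementary tensors; the delicate point is the bookkeeping that keeps the norms $\|u_j\|$ under control simultaneously with the approximation property, since Proposition~\ref{propos}(d) by itself gives no norm bound on the approximating operators. I expect the cleanest route is to first fix the $u_j$ for $j \ge 2$ (getting bounds on their norms), then choose $u_1$ last with approximation fine enough to beat the product $\prod_{j\ge 2}\|u_j\|$ — or, more symmetrically, to approximate $\mathrm{id}_{E_j}$ step by step, replacing one factor at a time and re-estimating against the already-chosen operators. The other point that deserves care, rather than difficulty, is verifying that the factorization hypothesis $\mathcal{L}[\mathcal{I}_1,\ldots,\mathcal{I}_n]\subseteq \mathcal{I}\circ\mathcal{L}$ transfers from the multilinear map $B$ to its linearization $u_1\otimes\cdots\otimes u_n$; this is where I would lean on the results of \cite{Botelho1,Botelho0}.
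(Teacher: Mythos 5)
Your proposal is correct and follows essentially the same route as the paper: the same candidate operator $u_1\otimes\cdots\otimes u_n$, the same use of the hypothesis $\mathcal{L}[\mathcal{I}_1,\ldots,\mathcal{I}_n]\subseteq\mathcal{I}\circ\mathcal{L}$ via linearization of $\sigma_n\circ(u_1,\ldots,u_n)$ (the paper cites \cite[Proposition 3.2(a)]{Botelho0} for exactly the descent step you flag), the same telescoping estimate, and the same sequential choice of the $u_j$ with each tolerance beating the product of the previously chosen norms. The only cosmetic difference is in reducing $K$ to elementary tensors: instead of your null-sequence truncation (whose uniformity over $K$ you leave implicit), the paper invokes the standard fact that a compact subset of $E_1\hat\otimes_\pi\cdots\hat\otimes_\pi E_n$ lies in the closed absolutely convex hull of $K_1\otimes\cdots\otimes K_n$ for compact $K_j\subseteq E_j$, which settles that point cleanly.
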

\begin{proof} Let $K$ be a compact subset of $E_1 \hat\otimes_\pi \cdots \hat\otimes_\pi E_n$. By \cite[Corollary 3.5.1]{klaus} there are compact sets $K_1 \subseteq E_1, \ldots, K_n \subseteq E_n$ such that $K$ is contained in the closure of the absolutely convex hull of $K_1 \otimes \cdots \otimes K_n :=$ $ \{x_1 \otimes \cdots \otimes x_n : x_1 \in K_1, \ldots, x_n \in K_n\}$. Since compact sets are bounded there is $M > 0$ such that $\|x_j \| \leq M$ for every $x_j \in E_j$, $j = 1, \ldots, n$. Let $\varepsilon > 0$. As $E_1$ has $\mathcal{I}_1$-AP, there is an operator $u_1 \in {\cal I}_1(E_1;E_1)$ such that $\|u_1(x_1) - x_1\| < \frac{\varepsilon}{2nM^{n-1}}$ for every $x_1 \in K_1$. As $E_2$ has $\mathcal{I}_2$-AP, there is an operator $u_2 \in {\cal I}_2(E_2;E_2)$ such that $\|u_2(x_2) - x_2\| < \frac{\varepsilon}{2nM^{n-1}\|u_1\|}$ for every $x_2 \in K_2$. Repeating the procedure we obtain operators $u_j \in {\cal I}_j(E_j;E_j)$ such that
$$\|u_j(x_j) - x_j\| < \frac{\varepsilon}{2nM^{n-1}\|u_1\|\cdots \|u_{j-1}\|}$$
for every $x_j \in K_j$, $j = 1, \ldots, n$. Consider the canonical $n$-linear mapping $\sigma_n \colon E_1 \otimes \cdots \times E_n \longrightarrow  E_1 \hat\otimes_\pi \cdots \hat\otimes_\pi E_n$ given by $\sigma_n(x_1 , \ldots ,x_n) = x_1 \otimes \cdots \otimes x_n$ and observe that $\sigma_n \circ (u_1, \ldots, u_n) \in {\mathcal{L}[\mathcal{I}_1, \ldots, \mathcal{I}_n]}(E_1, \ldots, E_n;E_1 \hat\otimes_\pi \cdots \hat\otimes_\pi E_n)$. By assumption we have $\sigma_n \circ (u_1, \ldots, u_n) \in \mathcal{I} \circ {\mathcal{L}}(E_1, \ldots, E_n;E_1 \hat\otimes_\pi \cdots \hat\otimes_\pi E_n)$. Calling $T$ the linearization of $\sigma_n \circ (u_1, \ldots, u_n)$, by \cite[Proposition 3.2(a)]{Botelho0} we have that $T \in {\cal I}(E_1 \hat\otimes_\pi \cdots \hat\otimes_\pi E_n;E_1 \hat\otimes_\pi \cdots \hat\otimes_\pi E_n)$. For every $x_1 \in E_1, \ldots, x_n \in E_n$,
\begin{eqnarray*}T(x_1 \otimes \cdots \otimes x_n) &=& \sigma_n \circ (u_1, \ldots, u_n)(x_1, \ldots, x_n)\\
&=& \sigma_n (u_1(x_1), \ldots, u_n(x_n))\\
&=& u_1(x_1) \otimes \cdots \otimes u_n(x_n)\\
&=&  u_1 \otimes \cdots \otimes u_n(x_1 \otimes \cdots \otimes x_n).
\end{eqnarray*}
As both $T$ and $u_1 \otimes \cdots \otimes u_n $ are linear it follows that $T = u_1 \otimes \cdots \otimes u_n $, hence $u_1 \otimes \cdots \otimes u_n \in {\cal I}(E_1 \hat\otimes_\pi \cdots \hat\otimes_\pi E_n;E_1 \hat\otimes_\pi \cdots \hat\otimes_\pi E_n)$. We shall denote the projective norm of a tensor $z \in E_1 \hat\otimes_\pi \cdots \hat\otimes_\pi E_n$ by $\|z\|$ instead of $\pi(z)$. Given $x_1 \in K_1, \ldots, x_n \in K_n$,
$$\|u_1 \otimes \cdots \otimes u_n (x_1 \otimes \cdots \otimes x_n) - x_1 \otimes \cdots\otimes x_n \|= \|u_1(x_1) \otimes \cdots \otimes u_n (x_n) - x_1 \otimes \cdots\otimes x_n \|$$
$$= \|u_1(x_1) \otimes \cdots \otimes u_n (x_n) - \sum_{j=1}^{n-1} u_1(x_1) \otimes \cdots \otimes u_j(x_j) \otimes x_{j+1} \otimes \cdots \otimes x_n \hspace*{30em}$$
$$ + \sum_{j=1}^{n-1} u_1(x_1) \otimes \cdots \otimes u_j(x_j) \otimes x_{j+1} \otimes \cdots \otimes x_n   - x_1 \otimes \cdots\otimes x_n \| \hspace*{5em} $$
$$= \left\|\sum_{j=1}^n u_1(x_1)\otimes \cdots \otimes u_{j-1}(x_{j-1}) \otimes (u_j(x_j) - x_j)\otimes x_{j+1} \otimes \cdots \otimes x_n   \right\|\hspace*{30em}$$
$$\leq \sum_{j=1}^n \|u_1(x_1)\otimes \cdots \otimes u_{j-1}(x_{j-1}) \otimes (u_j(x_j) - x_j)\otimes x_{j+1} \otimes \cdots \otimes x_n \| \hspace*{30em}$$
$$ \leq \sum_{j=1}^n \|u_1\|\|x_1\| \cdots \|u_{j-1}\|\|x_{j-1}\|\|u_j(x_j) - x_j\|\|x_{j+1}\|  \cdots \|x_n \| \hspace*{30em}$$
$$< \sum_{j=1}^n \|u_1\| \cdots \|u_{j-1}\|M^{n-1}\frac{\varepsilon}{2nM^{n-1}\|u_1\|\cdots \|u_{j-1}\|} = \frac{\varepsilon}{2}. \hspace*{30em}$$
In summary,
$$\|u_1 \otimes \cdots \otimes u_n (x_1 \otimes \cdots \otimes x_n) - x_1 \otimes \cdots\otimes x_n \| < \frac{\varepsilon}{2}, $$
for every $x_1 \in K_1, \ldots, x_n \in K_n$. Take $z$ in the absolutely convex hull of $K_1 \otimes \cdots \otimes K_n$. Then $z = \sum_{j=1}^k \lambda_j x_j^1 \otimes \cdots \otimes x_j^n$, where $k \in \mathbb{N}$, $\lambda_1,\ldots, \lambda_k$ are scalars with $|\lambda_1| + \cdots + |\lambda_k |\leq 1$, and $x_j^m \in K_m$ for $j = 1, \ldots, k, m = 1, \ldots, n$. Then
$$\|u_1 \,\otimes \cdots \otimes u_n(z)~ - ~z\|  \hspace*{30em}$$
$$ = \left\|u_1 \otimes \cdots \otimes u_n\left(\sum_{j=1}^k \lambda_j x_j^1 \otimes \cdots \otimes x_j^n \right)- \sum_{j=1}^k \lambda_j x_j^1 \otimes \cdots \otimes x_j^n  \right\| \hspace*{30em}$$
$$= \left\| \sum_{j=1}^k \lambda_j \left(u_1 \otimes \cdots \otimes u_n\left( x_j^1 \otimes \cdots \otimes x_j^n \right)- x_j^1 \otimes \cdots \otimes x_j^n \right)  \right\| \hspace*{30em}$$
 $$\leq   \sum_{j=1}^k |\lambda_j|\left\|\left(u_1 \otimes \cdots \otimes u_n\left( x_j^1 \otimes \cdots \otimes x_j^n \right)- x_j^1 \otimes \cdots \otimes x_j^n \right)  \right\|   \hspace*{30em}$$
$$<  \frac{\varepsilon}{2}\sum_{j=1}^k |\lambda_j| = \frac{\varepsilon}{2}.\hspace*{30em}$$
By continuity we have that
$$\|u_1 \otimes \cdots \otimes u_n(z) - z\|\leq \frac{\varepsilon}{2} < \varepsilon$$
for every $z$ in the closure of the absolutely convex hull of $K_1 \otimes \cdots \otimes K_n$, hence for every $z \in K$.
\end{proof}

As to ideals satisfying the conditions above we have:
\begin{example}\rm (a) It is plain that $\mathcal{L}[\mathcal{F}] \subseteq \mathcal{F}\circ\mathcal{L}$ and $\mathcal{L}[\mathcal{S}] \subseteq \mathcal{S}\circ\mathcal{L}$. \\
(b) $\mathcal{L}[\mathcal{N}_1] \subseteq \mathcal{N}_1\circ\mathcal{L}$ \cite[Theorem 3.7]{holub1} (see also \cite[Proposition 17.3.9]{jarchow}).\\
(c) $\mathcal{L}[\mathcal{J}] \subseteq \mathcal{J}\circ\mathcal{L}$ \cite[Theorem 2]{holub}.\\
(d) Let ${\cal L}_{\cal K}$ denote the ideal of compact multilinear mappings (bounded sets are sent to relatively compact sets). Pe{\l}czy\'nski \cite{pel} proved that ${\cal K} \circ {\cal L} = {\cal L}_{\cal K}$. Now it follows easily that $\mathcal{L}[\mathcal{K}] \subseteq \mathcal{K}\circ\mathcal{L}$. So the projective tensor product of spaces with CAP has CAP too.\\
(e) $\mathcal{L}[\mathcal{L}_{\infty, q, \gamma}] \subseteq \mathcal{L}_{\infty, q, \gamma}\circ\mathcal{L}$ for $0<q\leq1$ and $-1/q < \gamma < \infty$ \cite[Theorem 3.1]{cobos}.\\
(f) $\mathcal{L}[\mathcal{L}_{1,q}] \subseteq \mathcal{L}_{1, q}\circ\mathcal{L}$ for $q> 1$ and $\mathcal{L}[\mathcal{K}_{1,p}] \subseteq \mathcal{K}_{1, p}\circ\mathcal{L}$ for $p \geq 1$ \cite[Theorem 2.1]{cdr}.\\
(g) It is unknown if the projective tensor product of Schur spaces is a Schur space (see, e.g., \cite{pams}), so it is unknown if  $\mathcal{L}[\mathcal{CC}] \subseteq \mathcal{CC}\circ\mathcal{L}$.
\end{example}

Here are other concrete situations to which Proposition \ref{prodtens} applies:
\begin{lemma}\label{racher} Let $n \in \mathbb{N}$.\\
 {\rm (a)} If $1 \leq p_1, \ldots, p_n < \infty$, then ${\cal L}[{\cal W}, {\cal I}_1, \ldots, {\cal I}_n] \subseteq {\cal W} \circ {\cal L}$ where ${\cal I}_j$ is either ${\cal K}$ or $\Pi_{p_j}$, $j = 1, \ldots, n$.\\
 {\rm (b)} ${\cal L}[\Pi_1, {\cal J}, \stackrel{(n)}{\ldots}, {\cal J}] \subseteq \Pi_1 \circ {\cal L}$.\\
{\rm (c)} ${\cal L}[{\cal QN}, {\cal N}_1, \stackrel{(n)}{\ldots}, {\cal N}_1] \subseteq {\cal QN} \circ {\cal L}$.\\
{\rm (d)} If $p_1 > p_j$ for $j = 2, \ldots, n$, then ${\cal L}[{\cal U}_{p_1}, {\cal U}_{p_2}, \ldots, {\cal U}_{p_n}] \subseteq {\cal U}_{p_1} \circ {\cal L}$.
\end{lemma}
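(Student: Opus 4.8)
\textbf{Proof plan for Lemma \ref{racher}.}

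The plan is to prove all four parts by the same template: given a multilinear map $A = B \circ (v_1, \dots, v_n)$ of type $\mathcal{L}[\mathcal{J}_1, \dots, \mathcal{J}_n]$ (with the $\mathcal{J}_j$ the ideals specified in each item), I want to exhibit a factorization $A = u \circ C$ with $C$ a continuous $n$-linear map into a suitable Banach space $G$ and $u$ a linear operator lying in the target ideal. The natural candidate is the linearization: let $T \colon G_1 \hat\otimes_\pi \cdots \hat\otimes_\pi G_n \longrightarrow F$ be the linearization of $B$, so that $A = T \circ \sigma_n \circ (v_1, \dots, v_n)$, where $\sigma_n$ is the canonical $n$-linear map into the projective tensor product. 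Then $A = (T) \circ (\sigma_n \circ (v_1, \dots, v_n))$ expresses $A$ as a linear operator $T$ composed with an $n$-linear map; but $T$ need not be in the target ideal. The better move is to absorb the operators $v_j$ into the linear part: write $A = \big(T \circ (v_1 \otimes \cdots \otimes v_n)\big) \circ \sigma_n$ where now $\sigma_n \colon E_1 \times \cdots \times E_n \to E_1 \hat\otimes_\pi \cdots \hat\otimes_\pi E_n$ and the linear operator is $u := T \circ (v_1 \otimes \cdots \otimes v_n) \in \mathcal{L}(E_1 \hat\otimes_\pi \cdots \hat\otimes_\pi E_n; F)$. So everything reduces to the purely linear statement: \emph{if $v_j \in \mathcal{J}_j(E_j; G_j)$ for $j = 1, \dots, n$, then $v_1 \otimes \cdots \otimes v_n \in \mathcal{Q}(E_1 \hat\otimes_\pi \cdots \hat\otimes_\pi E_n; G_1 \hat\otimes_\pi \cdots \hat\otimes_\pi G_n)$} for the appropriate target ideal $\mathcal{Q}$; composing on the left with the arbitrary bounded $T$ and using the ideal property of $\mathcal{Q}$ then finishes each case.

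The heart of the matter is therefore the tensor-stability of each ideal under $\hat\otimes_\pi$, which I would handle case by case. For (a): by Example (d) in the preceding list, $\mathcal{L}[\mathcal{K}] \subseteq \mathcal{K} \circ \mathcal{L}$, and one knows $\Pi_p \subseteq \mathcal{K}$ for bounded-below... no — rather $\Pi_p$ maps weakly compact sets to... the cleanest route is: $\Pi_{p_j} \subseteq \mathcal{W}$ is false in general, but $\Pi_{p_j}$ factors through a subspace of an $L_p$ space, hence through a reflexive space when... Actually the key classical fact is that $\mathcal{W}$ is stable under $\hat\otimes_\pi$ when one of the factors has an additional property; here I would instead reduce via the factorization of $p$-summing operators: each $\Pi_{p_j}$ operator factors as $E_j \to L_\infty \to L_{p_j} \to G_j$, and $\mathcal{K}$-operators factor through $c_0$ (Davis--Figiel--Johnson--Pełczyński / the Pełczyński result quoted), so $v_1 \otimes \cdots \otimes v_n$ factors through a projective tensor product one of whose factors is reflexive (coming from the first, $\mathcal{W}$, coordinate via DFJP), making the whole tensor operator weakly compact. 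For (b) and (c) I would invoke Example (c), $\mathcal{L}[\mathcal{J}] \subseteq \mathcal{J} \circ \mathcal{L}$, and Example (b), $\mathcal{L}[\mathcal{N}_1] \subseteq \mathcal{N}_1 \circ \mathcal{L}$, together with the facts $\mathcal{N}_1 \subseteq \mathcal{J}$, $\mathcal{N}_1 \subseteq \mathcal{QN}$, $\Pi_1 = \mathcal{N}_1 \circ \ldots$ — more precisely, a $\Pi_1$ operator followed by tensoring with $\mathcal{J}$-operators: use that $\mathcal{J} \hat\otimes_\pi \mathcal{J}$-type products stay integral (Holub) and that $\Pi_1$ composed appropriately with integral operators stays $\Pi_1$ by the ideal property plus the factorization of integral operators through $L_\infty$/$L_1$. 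For (d) I would use that $\mathcal{U}_{p}$ has a known multiplicativity of approximation numbers under $\hat\otimes_\pi$: $a_{k+l-1}(v_1 \otimes v_2) \le a_k(v_1)\, a_l(v_2)\,\|\cdot\|$-type estimates, whence if $v_1$ has approximation numbers in $\ell_{p_1}$ and the others in $\ell_{p_j}$ with $p_j < p_1$, Hölder-type summation puts the approximation numbers of the tensor product in $\ell_{p_1}$; this is essentially \cite[14.2.4]{Pietsch} iterated.

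\textbf{Main obstacle.} The routine part is the reduction to linear tensor-stability; the genuine content — and the step I expect to be the crux — is establishing, for each concrete ideal, that $v_1 \otimes \cdots \otimes v_n$ lies in the claimed ideal on the projective tensor product. This is not automatic: as the weakly-compact example before the lemma shows, even $\mathcal{W}$ fails to be $\hat\otimes_\pi$-stable in general, so each item requires a specific structural fact (DFJP-type factorization for (a), Holub's integral-operator tensor theorem for (b), the nuclear/quasinuclear factorization for (c), submultiplicativity of approximation numbers for (d)). For (a) in particular the delicate point is that the $\mathcal{K}$-factors alone would only give compactness of the tensor product (Example (d)), not weak compactness; it is precisely the single $\mathcal{W}$-factor, pushed through a Davis--Figiel--Johnson--Pełczyński reflexive interpolation space, that upgrades the conclusion to $\mathcal{W} \circ \mathcal{L}$, and making that factorization compatible with the projective tensor norm (so that the induced map on tensor products still factors through the reflexive space) is where the care is needed.
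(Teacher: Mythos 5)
Your overall architecture is exactly the paper's: write $A=B\circ(v_0,v_1,\ldots,v_n)$, pass to the linearization $T$ of $B$, observe $A=\bigl(T\circ(v_0\otimes v_1\otimes\cdots\otimes v_n)\bigr)\circ\sigma$, and reduce everything to showing that the tensor product of the $v_j$'s lies in the target ideal, after which the ideal property finishes the job. That reduction is indeed the routine part. The gap is in the step you yourself identify as the crux, and for part (a) your proposed mechanism is not merely incomplete but false. You claim that because the $\mathcal{W}$-coordinate factors through a reflexive space (Davis--Figiel--Johnson--Pe{\l}czy\'nski), the operator $v_0\otimes v_1\otimes\cdots\otimes v_n$ factors through a projective tensor product with one reflexive factor and is therefore weakly compact. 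The paper's own counterexample refutes this: $id_{\ell_2}$ factors through the reflexive space $\ell_2$, yet $id_{\ell_2}\otimes id_{\ell_2}=id_{\ell_2\hat\otimes_\pi\ell_2}$ is not weakly compact, since $\ell_2\hat\otimes_\pi\ell_2$ is not reflexive. One reflexive factor in a projective tensor product buys nothing; the weak compactness of $u\otimes u_1$ genuinely requires the \emph{other} factor to be compact or absolutely $p$-summing. This is precisely Racher's theorem, which the paper cites and then iterates two factors at a time using associativity of the projective norm; it cannot be recovered from a reflexive-interpolation heuristic.

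The remaining parts have the analogous defect: the needed facts are specific two-factor tensor-stability theorems, and your sketches do not supply them. For (b) you need $u_1\otimes u_2\in\Pi_1$ when $u_1\in\Pi_1$ and $u_2\in\mathcal{J}$ (Holub); the inclusion $\mathcal{L}[\mathcal{J}]\subseteq\mathcal{J}\circ\mathcal{L}$ that you invoke can only ever produce an \emph{integral} conclusion, not a $\Pi_1$ one, and ``$\Pi_1$ composed appropriately with integral operators stays $\Pi_1$'' is not an argument. For (c) you likewise need $u_1\otimes u_2\in\mathcal{QN}$ when $u_1\in\mathcal{QN}$ and $u_2\in\mathcal{N}_1$ (again Holub), which does not follow from $\mathcal{L}[\mathcal{N}_1]\subseteq\mathcal{N}_1\circ\mathcal{L}$ plus $\mathcal{N}_1\subseteq\mathcal{QN}$. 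For (d) the multiplicative estimate $a_{k+l-1}(v_1\otimes v_2)\le a_k(v_1)\,a_l(v_2)$ is a Hilbert-space (singular number) phenomenon and is not available for the projective norm; the correct statement is the K\"onig--Pietsch theorem that $u_1\otimes u_2\in\mathcal{U}_{p_1}$ when $u_1\in\mathcal{U}_{p_1}$, $u_2\in\mathcal{U}_{p_2}$ and $p_1>p_2$ (the strict inequality is essential and comes out of the additive perturbation estimate, not a H\"older argument on a product bound). In short: each item stands or falls on a named, nontrivial tensor-stability theorem, and your proposal replaces these with heuristics that either prove less or, in case (a), prove something false.
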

\begin{proof} (a) Given an $n$-linear mapping $A \in {\cal L}[{\cal W}, {\cal I}_1, \ldots, {\cal I}_n](E,E_1, \ldots, E_;F)$, write $A = B \circ (u,u_1, \ldots, u_n)$ with $u\in
\mathcal{W}(E;G)$, $u_j\in
\mathcal{I}_j(E_j;G_j)$, $j = 1, \ldots, n$, and $B\in \mathcal{L}(G, G_1, \ldots, G_n;F)$. Since $u$ is weakly compact and $u_1$ is either compact or absolutely $p_1$-summing, by a result of Racher \cite{racher} we have that $u \otimes u_1$ is weakly compact. As $u_2$ is either compact or absolutely $p_2$-summing and the projective tensor norm is associative, $u \otimes u_1 \otimes u_2 = (u \otimes u_1) \otimes u_2$ is weakly compact by the same result of \cite{racher}. Repeating this procedure finitely many times we conclude that $u \otimes u_1 \otimes \cdots \otimes u_n$ is weakly compact. Denoting by $\sigma_{n+1}\colon E \times E_1 \times \cdots \times E_n \longrightarrow E \hat\otimes_\pi E_1 \hat\otimes_\pi \cdots \hat\otimes_\pi E_n  $ the canonical $(n+1)$-linear mapping and by $T$ the linearization of $B$, we conclude that $A = T \circ (u \otimes u_1 \otimes \cdots \otimes u_n) \circ \sigma_{n+1}$. It follows that $A \in {\cal W} \circ {\cal L}(E,E_1, \ldots, E_;F)$ because $T \circ (u \otimes u_1 \otimes \cdots \otimes u_n)$ is weakly compact by the ideal property.

\medskip

\noindent For (b), (c) and (d), repeat the proof above using, instead of Racher's result, the following results: in (b) and (c), two results due to Holub \cite{holub}: (i) If $u_1 \in \Pi_1(E_1;F_1)$ and $u_2 \in {\cal J}(E_2;F_2)$ then $u_1 \otimes u_2 \in \Pi_1(E_1 \hat\otimes_\pi E_2; F_1 \hat\otimes_\pi F_2)$; (ii) If $u_1 \in {\cal QN}(E_1;F_1)$ and $u_2 \in {\cal N}_1(E_2;F_2)$ then $u_1 \otimes u_2 \in {\cal QN}(E_1 \hat\otimes_\pi E_2; F_1 \hat\otimes_\pi F_2)$. In (d), the following result, which appears in K\"onig \cite[p. 79]{konig} and is credited to Pietsch \cite{pietscharchiv}: if $u_1 \in {\cal U}_{p_1}(E_1;F_1)$, $u_2 \in {\cal U}_{p_2}(E_2;F_2)$ and $p_1 > p_2$, then $u_1 \otimes u_2 \in {\cal U}_{p_1}(E_1 \hat\otimes_\pi E_2; F_1 \hat\otimes_\pi F_2)$.
\end{proof}
Combining Proposition \ref{prodtens} and Lemma \ref{racher} we get:
\begin{proposition}~\\
 {\rm (a)} Let $E_1, \ldots, E_n$ be Banach spaces, one of which with WCAP and the others $E_j$ with either CAP or $\Pi_{p_j}$-AP for some $1 \leq p_j < \infty$. Then $E_1 \hat\otimes_\pi \cdots \hat\otimes_\pi E_n$ has WCAP.\\
 {\rm (b)} Let $E_1, \ldots, E_n$ be Banach spaces, one of which with $\Pi_1$-AP and the others with $\cal J$-AP. Then $E_1 \hat\otimes_\pi \cdots \hat\otimes_\pi E_n$ has $\Pi_1$-AP.\\
{\rm (c)} Let $E_1, \ldots, E_n$ be Banach spaces, one of which with $\cal QN$-AP and the others with AP. Then $E_1 \hat\otimes_\pi \cdots \hat\otimes_\pi E_n$ has AP.\\
{\rm (d)} Let $0 < p_1, \ldots, p_n$. If $E_1, \ldots, E_n$ are Banach spaces, each $E_j$ with ${\cal U}_{p_j}$-AP, then $E_1 \hat\otimes_\pi \cdots \hat\otimes_\pi E_n$ has ${\cal U}_{p_k}$-AP if $p_k > p_j$ for every $j \neq k$.
\end{proposition}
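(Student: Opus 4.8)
The plan is to derive the four assertions by matching each part of Lemma~\ref{racher} with Proposition~\ref{prodtens}. The only preliminary observation required is that the completed projective tensor product is, up to isometric isomorphism, invariant under permutations of its factors, and that the $\mathcal{I}$-approximation property is preserved by isomorphisms (immediate from Definition~\ref{def}, or from Proposition~\ref{complemented}); hence in each item we may relabel the factors so that the distinguished space is $E_1$ (or $E_k$ in (d)).

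For (a), relabel so that $E_1$ has WCAP, i.e.\ $\mathcal{W}$-AP, and for $j=2,\dots,n$ put $\mathcal{I}_{j-1}=\mathcal{K}$ if $E_j$ has CAP and $\mathcal{I}_{j-1}=\Pi_{p_j}$ if $E_j$ has $\Pi_{p_j}$-AP. Lemma~\ref{racher}(a) gives $\mathcal{L}[\mathcal{W},\mathcal{I}_1,\dots,\mathcal{I}_{n-1}]\subseteq\mathcal{W}\circ\mathcal{L}$, so Proposition~\ref{prodtens}, applied with the $n$ source ideals $\mathcal{W},\mathcal{I}_1,\dots,\mathcal{I}_{n-1}$ and with target $\mathcal{I}=\mathcal{W}$, yields that $E_1\hat\otimes_\pi\cdots\hat\otimes_\pi E_n$ has $\mathcal{W}$-AP, that is, WCAP.

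Parts (b) and (d) follow by the same pattern. For (b), relabel so that $E_1$ has $\Pi_1$-AP and the remaining factors have $\mathcal{J}$-AP, and combine $\mathcal{L}[\Pi_1,\mathcal{J},\dots,\mathcal{J}]\subseteq\Pi_1\circ\mathcal{L}$ (Lemma~\ref{racher}(b)) with Proposition~\ref{prodtens} for $\mathcal{I}=\Pi_1$. For (d), relabel so that the factor carrying the strictly largest exponent $p_k$ comes first; since $p_k>p_j$ for every $j\neq k$, Lemma~\ref{racher}(d) applies and gives $\mathcal{L}[\mathcal{U}_{p_k},\mathcal{U}_{p_{j_2}},\dots,\mathcal{U}_{p_{j_n}}]\subseteq\mathcal{U}_{p_k}\circ\mathcal{L}$, so Proposition~\ref{prodtens} with $\mathcal{I}=\mathcal{U}_{p_k}$ does the job. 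Part (c) first yields slightly more than claimed: relabel so that $E_1$ has $\mathcal{QN}$-AP; by Example~\ref{example} each of the other factors, having AP, also has $\mathcal{N}_1$-AP, so Lemma~\ref{racher}(c) and Proposition~\ref{prodtens} (with $\mathcal{I}=\mathcal{QN}$) show that $E_1\hat\otimes_\pi\cdots\hat\otimes_\pi E_n$ has $\mathcal{QN}$-AP. Since quasi-nuclear operators are approximable, $\overline{\mathcal{QN}}=\mathcal{A}$, whence $\mathcal{QN}$-AP coincides with AP by the equivalence of (a) and (b) in Proposition~\ref{propos}, and the tensor product has AP.

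There is no genuine obstacle here: the substantive work --- showing that the ideals in question are stable under $\hat\otimes_\pi$ in the factorization/composition sense --- has been done in Lemma~\ref{racher}, and the passage from those inclusions to approximation properties is exactly Proposition~\ref{prodtens}. The only steps deserving a word of care are the harmless reordering of the factors, justified by the symmetry of the projective tensor product, and, in (c), the identification $\mathcal{QN}$-AP $=$ AP, which rests on the inclusions $\mathcal{F}\subseteq\mathcal{QN}\subseteq\mathcal{A}$.
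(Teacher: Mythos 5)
Your proof is correct and follows exactly the route the paper intends: the proposition is stated immediately after the sentence ``Combining Proposition \ref{prodtens} and Lemma \ref{racher} we get,'' and your argument is precisely that combination, with the harmless relabelling of factors made explicit. The only place you add genuine detail is part (c), where you correctly justify both that AP implies ${\cal N}_1$-AP and that ${\cal QN}$-AP coincides with AP via ${\cal F}\subseteq{\cal QN}\subseteq{\cal A}$ and Proposition \ref{propos}; the paper leaves this step implicit.
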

\begin{example}\rm Let $E$ be a Banach space with WCAP but not with CAP (see the Introduction), and $E_1, \ldots, E_n$ be Banach spaces such that each $E_j$ has either CAP or $\Pi_{p_j}$-AP, $1 \leq p_1, \ldots, p_n < \infty$. Then $E  \hat\otimes_\pi E_1 \hat\otimes_\pi \cdots \hat\otimes_\pi E_n$ has WCAP.
\end{example}


\begin{corollary}\label{cortens} Let $\cal I$ be an operator ideal such that $\mathcal{L}[\mathcal{I}] \subseteq \mathcal{I}\circ\mathcal{L}$. The following are equivalent for a Banach space $E$:\\
{\rm (a)} $E$ has $\cal I$-AP.\\
\medskip
{\rm (b)} $\hat\otimes_\pi^n E$ has $\cal I$-AP for every $n \in \mathbb{N}$.\\
\medskip
{\rm (c)} $\hat\otimes_\pi^n E$ has $\cal I$-AP for some $n \in \mathbb{N}$.\\
\medskip
{\rm (d)} $\hat\otimes_\pi^{n,s}E$ has $\cal I$-AP for every $n \in \mathbb{N}$.\\
\medskip
{\rm (e)} $\hat\otimes_\pi^{n,s}E$ has $\cal I$-AP for some $n \in \mathbb{N}$.
\end{corollary}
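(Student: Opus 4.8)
The plan is to deduce everything from Proposition \ref{prodtens}, Proposition \ref{complemented}, and the symmetry considerations, organized around a cycle of implications. First I would establish $(a)\Rightarrow(b)$: assuming $E$ has $\cal I$-AP and that $\mathcal{L}[\mathcal{I}]\subseteq \mathcal{I}\circ\mathcal{L}$, apply Proposition \ref{prodtens} with $E_1=\cdots=E_n=E$ and $\mathcal{I}_1=\cdots=\mathcal{I}_n=\mathcal{I}$ to conclude $\hat\otimes_\pi^n E$ has $\cal I$-AP for every $n$. The implications $(b)\Rightarrow(c)$ and $(d)\Rightarrow(e)$ are trivial. For $(c)\Rightarrow(a)$ I would use that $E$ is (isometrically isomorphic to) a complemented subspace of $\hat\otimes_\pi^n E$ for every $n\ge 1$: fixing norm-one functionals $\varphi_1,\dots,\varphi_{n-1}\in E'$ and norm-one vectors $e_1,\dots,e_{n-1}\in E$ with $\varphi_i(e_i)=1$, the maps $x\mapsto e_1\otimes\cdots\otimes e_{n-1}\otimes x$ and $z\mapsto (\varphi_1\otimes\cdots\otimes\varphi_{n-1}\otimes \mathrm{id}_E)(z)$ exhibit this complementation. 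Then Proposition \ref{complemented} gives that $E$ has $\cal I$-AP. This closes the cycle $(a)\Leftrightarrow(b)\Leftrightarrow(c)$.

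Next I would bring in the symmetric tensor products. For $(a)\Rightarrow(d)$ the cleanest route is to recall that $\hat\otimes_\pi^{n,s}E$ is a complemented subspace of $\hat\otimes_\pi^n E$, the projection being the symmetrization operator $z\mapsto \frac{1}{n!}\sum_{\sigma}\sigma(z)$ (where $\sigma$ permutes the tensor factors), which is bounded of norm one on the projective symmetric tensor product. Hence once $\hat\otimes_\pi^n E$ has $\cal I$-AP — which we already know from $(a)\Rightarrow(b)$ — Proposition \ref{complemented} yields that $\hat\otimes_\pi^{n,s}E$ has $\cal I$-AP, giving $(d)$. For $(e)\Rightarrow(a)$ I would show that $E$ is a complemented subspace of $\hat\otimes_\pi^{n,s}E$: the embedding $x\mapsto x\otimes\cdots\otimes x$ is not linear, so instead I would use the linear embedding $x \mapsto \frac{1}{n!}\sum_\sigma \sigma(e_1\otimes\cdots\otimes e_{n-1}\otimes x)$ into the symmetric tensor product together with a suitable norm-one projection built from $\varphi_1,\dots,\varphi_{n-1}$ and the symmetrization; composing these recovers a nonzero multiple of $\mathrm{id}_E$, so after rescaling $E$ is complemented in $\hat\otimes_\pi^{n,s}E$, and Proposition \ref{complemented} finishes it.

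The main obstacle I anticipate is not any deep idea but the bookkeeping in the complementation arguments for the symmetric case: one must check that the candidate embedding and projection are genuinely bounded with respect to the \emph{projective symmetric} norm (not merely the full projective norm restricted to symmetric tensors — though for $\hat\otimes_\pi^{n,s}$ these agree up to the symmetrization being a norm-one projection, which is exactly the fact being invoked), and that their composition is indeed a nonzero scalar multiple of $\mathrm{id}_E$. For the case $n=1$ all statements are tautological, so one should note at the outset that we may assume $n\ge 2$ wherever a nontrivial tensor power appears. Everything else — the applications of Propositions \ref{prodtens} and \ref{complemented}, and the trivial implications $(b)\Rightarrow(c)$, $(d)\Rightarrow(e)$ — is immediate. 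I would present the proof simply as: ``$(a)\Rightarrow(b)$ by Proposition \ref{prodtens}; $(b)\Rightarrow(c)$ and $(d)\Rightarrow(e)$ are trivial; $(c)\Rightarrow(a)$, $(a)\Rightarrow(d)$ and $(e)\Rightarrow(a)$ follow from Proposition \ref{complemented} since $E$ is complemented in each of $\hat\otimes_\pi^n E$ and $\hat\otimes_\pi^{n,s}E$, and $\hat\otimes_\pi^{n,s}E$ is complemented in $\hat\otimes_\pi^n E$'', with the complementation facts spelled out via the explicit maps above.
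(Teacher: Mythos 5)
Your overall architecture is exactly the paper's: (a)$\Rightarrow$(b) by Proposition \ref{prodtens}, the trivial implications, and the remaining ones via Proposition \ref{complemented} using that $E$ is complemented in $\hat\otimes_\pi^n E$, that $\hat\otimes_\pi^{n,s}E$ is complemented in $\hat\otimes_\pi^n E$ via symmetrization, and that $E$ is complemented in $\hat\otimes_\pi^{n,s}E$. The only place you diverge is that the paper simply cites Blasco for the last of these complementation facts, whereas you sketch an explicit construction --- and there your proposed verification contains a step that would fail.

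Concretely: take $n\ge 2$, a single norm-one $e\in E$ and $\varphi\in E'$ with $\varphi(e)=1$, the embedding $j(x)=s(e\otimes\cdots\otimes e\otimes x)$ (with $s$ the symmetrization) and the candidate left inverse $q=(\varphi\otimes\cdots\otimes\varphi\otimes \mathrm{id}_E)\big|_{\hat\otimes_\pi^{n,s}E}$. Since $s(e^{\otimes(n-1)}\otimes x)=\frac1n\sum_{i=1}^n e\otimes\cdots\otimes x\otimes\cdots\otimes e$ (with $x$ in the $i$-th slot), one computes
\[
q\bigl(j(x)\bigr)=\frac1n\bigl(x+(n-1)\varphi(x)\,e\bigr),
\]
which is \emph{not} a scalar multiple of $\mathrm{id}_E$ --- it is $\frac1n(\mathrm{id}_E+(n-1)\varphi\otimes e)$, the identity plus a rank-one perturbation. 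So the sentence ``composing these recovers a nonzero multiple of $\mathrm{id}_E$, so after rescaling $E$ is complemented'' is false as stated, and the same phenomenon occurs (more messily) with distinct $e_1,\dots,e_{n-1}$ and $\varphi_1,\dots,\varphi_{n-1}$. The argument is repairable: the operator $\mathrm{id}_E+(n-1)\varphi\otimes e$ is invertible (it is the identity on $\ker\varphi$ and multiplication by $n$ on the span of $e$), so $q\circ j$ is an isomorphism of $E$ and $j\circ(q\circ j)^{-1}\circ q$ is a bounded projection of $\hat\otimes_\pi^{n,s}E$ onto $j(E)$; alternatively, just cite \cite[Corollary 4]{blasco} as the paper does. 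With that repair (and noting, as you do, that $n=1$ is trivial), the rest of your proof is correct and coincides with the paper's.
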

\begin{proof} (a) $\Longrightarrow$ (b) follows from Proposition \ref{prodtens}; (b) $\Longrightarrow$ (c) is obvious; (c) $\Longrightarrow$ (a) follows from Proposition \ref{complemented} because $E$ is obviously a complemented subspace of $\hat\otimes_\pi^n E$; (b) $\Longrightarrow$ (d) follows from Proposition \ref{complemented} because $\hat\otimes_\pi^{n,s}E$ a complemented subspace of $\hat\otimes_\pi^n E$ via the symmetrization operator; (d) $\Longrightarrow$ (e) is obvious; (e) $\Longrightarrow$ (a) follows from Proposition \ref{complemented} because $E$ is a complemented subspace of $\hat\otimes_\pi^{n,s}E$ (see \cite[Corollary 4]{blasco}).
\end{proof}

\section{Polynomial ideals and the $\mathcal{I}$-AP}

The symbol $\mathcal{P}(^n E;F)$ stands for the space of continuous $n$-homogeneous polynomials from $E$ to $F$. A {\it polynomial ideal} is a subclass $\cal Q$ of the class of all
continuous homogeneous polynomials between Banach spaces such that, for every $n \in \mathbb{N}$ and Banach spaces $E$ and $F$, the component
${\cal Q}(^nE;F):=\mathcal{P}(^nE;F)\cap {\cal Q}$ satisfy:\\
(a) ${\cal Q}(^n E;F)$ is a linear subspace of $\mathcal{P}(^n E;F)$ which contains the $n$-homogeneous
polynomials of finite type,\\
(b) If $T\in \mathcal{L}(G;E)$, $P\in
{\cal Q}(^nE;F)$ and $S\in \mathcal{L}(F;H)$, then
$S\circ P\circ T \in {\cal Q}(^nG;H)$.



There are different ways to construct a polynomial ideal from a given operator ideal $\mathcal{I}$.
Let us see three of such methods (see \cite{Botelho1,Botelho0}):

\begin{definition}\rm Let $\mathcal{I}$ be an operator ideal.\\
(a) (Factorization Method) A continuous $n$-homogeneous polynomial  $P\in \mathcal{P}(^n E;F)$
  is said to be of type  $\mathcal{L}[\mathcal{I}]$ if
  there are a Banach space $G$, a linear operator  $u\in
\mathcal{I}(E;G)$ and a polynomial  $Q\in \mathcal{P}(^n G;F)$ such that $P=Q\circ u$. In this case we write $P \in {\cal P}_{\mathcal{L}[\mathcal{I}]}(^nE;F)$\\
(b) (Composition ideals) A continuous
$n$-homogeneous polynomial $P\in \mathcal{P}(^n E;F)$ belongs to
$\mathcal{I}\circ \mathcal{P}$ if there are a Banach space
$G$, a polynomial $Q\in \mathcal{P}(^n G;F)$ and a linear operator
$u\in \mathcal{I}(E;G)$ such that $P=u\circ Q$. In this case we write $P\in
\mathcal{I}\circ\mathcal{P}(^n E;F)$.\\
(c) (Linearization method) A continuous $n$-homogeneous polynomial  $P\in \mathcal{P}(^n E;F)$
  is said to be of type  $[\mathcal{I}]$ if the linear operator \[{\bar P}: E \rightarrow {\cal P}(^{n-1}E;F)~,~{\bar P}(x)(y) = {\check P}(x,y, \ldots, y)\]
belongs to ${\cal I}$. In this case we write $P \in {\cal P}_{[\mathcal{I}]}(^nE;F)$.\\
\indent It is well known that $\mathcal{L}[\mathcal{I}]$, $\mathcal{I}\circ \mathcal{P}$ and $[\mathcal{I}]$ are polynomial ideals.
\end{definition}

Given a polynomial $P \in {\cal P}(^nE;F)$, by $\check P$ we mean the (unique) continuous symmetric $n$-linear mapping from $E^n$ to $F$ such that $P(x) = \check P(x, \ldots, x)$ for every $x \in E$.
\begin{theorem}\label{polideal}
Let $\mathcal{I}$ be an operator ideal. The following are equivalent for a Banach space $E$:\\
{\rm (a)} $E$ has the $\mathcal{I}$-approximation property.\\
{\rm (b)} $\mathcal{P}(^nE;F
)=\overline{\mathcal{P}_{\mathcal{L}[\mathcal{I}]}(^n
E;F)}^{\tau_c}$ for every $n\in \mathbb{N}$ and every Banach
space $F$.\\
{\rm (c)} $\mathcal{P}(^nE;F
)=\overline{\mathcal{P}_{\mathcal{L}[\mathcal{I}]}(^n
E;F)}^{\tau_c}$ for some $n\in \mathbb{N}$ and every Banach
space $F$.\\
{\rm (d)} $\mathcal{P}(^nF;E)=\overline{\mathcal{I}\circ\mathcal{P}(^nF;E)}^{\tau_c}$
for every $n\in \mathbb{N}$ and every Banach space $F$.
{\rm (e)} $\mathcal{P}(^nF;E)=\overline{\mathcal{I}\circ\mathcal{P}(^nF;E)}^{\tau_c}$
for some $n\in \mathbb{N}$ and every Banach space $F$.\\
\indent Furthermore, if $\mathcal{L}[\mathcal{I}]
\subseteq \mathcal{I}\circ \mathcal{L}$, then the conditions above are also equivalent to:
\\
{\rm (f)} $\mathcal{P}(^nE;F)=\overline{\mathcal{I}\circ\mathcal{P}(^nE;F)}^{\tau_c}$ for every $n\in \mathbb{N}$ and every Banach space $F$.\\
{\rm (g)} $\mathcal{P}(^nE;F)=\overline{\mathcal{I}\circ\mathcal{P}(^nE;F)}^{\tau_c}$ for some $n\in \mathbb{N}$ and every Banach space $F$.
\end{theorem}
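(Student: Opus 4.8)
The plan is to establish the cycle (a)$\Rightarrow$(b)$\Rightarrow$(c)$\Rightarrow$(a) for the factorization-method equivalences, then the cycle (a)$\Rightarrow$(d)$\Rightarrow$(e)$\Rightarrow$(a) for the composition-ideal equivalences, and finally to handle (f) and (g) under the extra hypothesis $\mathcal{L}[\mathcal{I}]\subseteq\mathcal{I}\circ\mathcal{L}$. The key technical device throughout is that an $n$-homogeneous polynomial $Q\circ u$ with $u\in\mathcal{I}(E;G)$ (respectively $u\circ Q$ with $u\in\mathcal{I}(G;F)$) should be well-approximated on a compact set $K\subseteq E$ once $u$ itself approximates the identity on a suitable compact set, together with the fact that $\mathcal{I}$-AP transfers between $E$ and related spaces via Proposition \ref{charac} parts (b) and (c).

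\textbf{The factorization equivalences.} For (a)$\Rightarrow$(b): given $P\in\mathcal{P}(^nE;F)$, a compact $K\subseteq E$ and $\varepsilon>0$, use Proposition \ref{propos}(d) to pick $u\in\mathcal{I}(E;E)$ with $\|u(x)-x\|<\delta$ for $x$ in the (compact) set $K\cup\check P$-images of $K$; then $P\circ u\in\mathcal{P}_{\mathcal{L}[\mathcal{I}]}(^nE;F)$ and the standard multilinear telescoping estimate — exactly the one used in the proof of Proposition \ref{prodtens}, expanding $\check P(u(x),\ldots,u(x))-\check P(x,\ldots,x)$ as a sum of $n$ terms each differing in one slot — gives $\|P(u(x))-P(x)\|<\varepsilon$ on $K$ for $\delta$ small enough relative to $\|\check P\|$, $n$ and $\sup_{x\in K}\|x\|$. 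The implication (b)$\Rightarrow$(c) is trivial. For (c)$\Rightarrow$(a): take $F=E$ and the polynomial $P_0(x)=\varphi(x)^{n-1}x$ for a fixed norm-one $\varphi\in E'$, or more simply argue that $\mathcal{P}(^nE;E)=\overline{\mathcal{P}_{\mathcal{L}[\mathcal{I}]}(^nE;E)}^{\tau_c}$ lets one approximate such a $P_0$ on a compact set; evaluating the approximating polynomials $Q\circ u$ and using that each factors through some $u\in\mathcal{I}(E;G)$ produces, after composing with a suitable bounded linear map, operators in $\mathcal{I}(E;E)$ that approximate $id_E$ on $K$ — so $E$ has $\mathcal{I}$-AP by Proposition \ref{propos}(c).

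\textbf{The composition-ideal equivalences and (f),(g).} For (a)$\Rightarrow$(d): given $P\in\mathcal{P}(^nF;E)$, compact $K\subseteq F$, $\varepsilon>0$, the set $P(K)$ is compact in $E$, so by Proposition \ref{propos}(d) there is $u\in\mathcal{I}(E;E)$ with $\|u(y)-y\|<\varepsilon$ on $P(K)$; then $u\circ P\in\mathcal{I}\circ\mathcal{P}(^nF;E)$ and $\|u(P(x))-P(x)\|<\varepsilon$ for all $x\in K$. Again (d)$\Rightarrow$(e) is trivial, and (e)$\Rightarrow$(a) follows by taking $F=E$, $n$ as given, and the polynomial $P_0(x)=\varphi(x)^{n-1}x$: approximants $u\circ Q$ with $u\in\mathcal{I}(\cdot;E)$ restrict, along the one-dimensional direction singled out by $\varphi$, to operators in $\mathcal{I}(E;E)$ close to $id_E$ on $K$. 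Finally, when $\mathcal{L}[\mathcal{I}]\subseteq\mathcal{I}\circ\mathcal{L}$, the inclusion $\mathcal{P}_{\mathcal{L}[\mathcal{I}]}(^nE;F)\subseteq\mathcal{I}\circ\mathcal{P}(^nE;F)$ holds (this is the polynomial analogue of \cite[Proposition 3.2(a)]{Botelho0}), so (b) immediately gives (f), (f)$\Rightarrow$(g) is trivial, and (g)$\Rightarrow$(a) repeats the (e)$\Rightarrow$(a) argument. The main obstacle is bookkeeping the multilinear telescoping estimate cleanly — choosing the auxiliary compact set and the tolerance $\delta$ so that one inequality serves all $n$ slots — but this is precisely the computation already carried out in the proof of Proposition \ref{prodtens}, so it can be invoked rather than redone.
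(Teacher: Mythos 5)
Your overall architecture coincides with the paper's: the same implication cycles, the same test polynomial $P(x)=\varphi(x)^{n-1}u(x)$, and the same idea of recovering a linear operator by freezing $n-1$ slots of $\check Q$ at a point $a$ with $\varphi(a)=1$. For (a)$\Rightarrow$(b) your telescoping estimate is a legitimate substitute for the paper's appeal to uniform continuity of $P$ near $K$, and (a)$\Rightarrow$(d) is the paper's argument verbatim. Your treatment of (f) is genuinely different and in fact shorter: you note that $\mathcal{L}[\mathcal{I}]\subseteq\mathcal{I}\circ\mathcal{L}$ forces $\mathcal{P}_{\mathcal{L}[\mathcal{I}]}(^nE;F)\subseteq\mathcal{I}\circ\mathcal{P}(^nE;F)$ (pass to $\check P=\check Q\circ(u,\ldots,u)$, factor it as $v\circ B$, restrict to the diagonal), so (b)$\Rightarrow$(f) is immediate; the paper instead routes (a)$\Rightarrow$(f) through the $\mathcal{I}$-AP of $\hat\otimes_\pi^{n,s}E$ (Corollary \ref{cortens}) and the linearization $P=P_L\circ\sigma_n$. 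Both work; yours avoids the tensor-product machinery.

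The gap is in the converse implications (c)$\Rightarrow$(a), (e)$\Rightarrow$(a), (g)$\Rightarrow$(a), which you compress into ``evaluating the approximating polynomials \dots\ produces, after composing with a suitable bounded linear map, operators in $\mathcal{I}(E;E)$ close to $id_E$ on $K$.'' Two things are missing, and they are the heart of the proof. First, closeness of $Q$ to $P$ on $K$ does not control $\check Q-\check P$ on $K^n$; you must request that $Q$ approximate $P$ on the enlarged compact set $K_1=\bigcup_{\varepsilon_i=\pm1}(\varepsilon_1K+\cdots+\varepsilon_nK)$ (with $a$ adjoined to $K$ so that $\varphi(a)=1$ is available) and then invoke the polarization formula to get $\|\check Q(x,a,\ldots,a)-\check P(x,a,\ldots,a)\|<\varepsilon/n$ for $x\in K$. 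Second, since $\check P(x,a,\ldots,a)=\tfrac1n u(x)+\tfrac{n-1}{n}\varphi(x)u(a)$, the operator that approximates $u$ is not obtained by composing $\check Q(\cdot,a,\ldots,a)$ with anything; it is the combination $S=n\check Q(\cdot,a,\ldots,a)-(n-1)\varphi(\cdot)u(a)$, whose first summand lies in $\mathcal{I}$ because $\check Q(\cdot,a,\ldots,a)=T\circ v$ in case (c) (resp.\ $v\circ T$ in cases (e) and (g)) with $v\in\mathcal{I}$ and $T$ bounded linear, and whose second summand is finite rank, hence in $\mathcal{I}$. With these two points inserted your argument closes; as written, the central step is asserted rather than proved.
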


\begin{proof} (a) $\Longrightarrow$ (b) Let $P\in \mathcal{P}(^n E;F)$, $K$ be a compact subset
of $E$ and $\varepsilon >0$. Since $P$ is uniformly continuous on
$K$, there is $\delta>0$ such that $\|P(y)-P(x)\|<\varepsilon $
whenever $\|y-x\|<\delta$, $x\in K$ and $y\in E$. By the ${\cal I}$-AP of $E$ there is an operator $T
\in \mathcal{I}(E;E)$ such that $\|T(x)-x\|< \delta $ for every
$x\in K$. It follow that $\|P(T(x))-P(x)\|< \varepsilon $ for every $x\in K$. But $P\circ
T\in \mathcal{P}_{\mathcal{L}[\mathcal{I}]}(^n E;F)$, so we have that
$P\in \overline{\mathcal{P}_{\mathcal{L}[\mathcal{I}]}(^n
E;F)}^{\tau_c}$.

\medskip

\noindent (c) $\Longrightarrow$ (a) Let $u\in \mathcal{L}(E;F)$, $K$ be
a compact subset of $E$ and $\varepsilon >0$. Let $\varphi\in
E'$, $\varphi\neq 0$, and $a\in K$ be such that $\varphi(a)=1$.
Define $P\in \mathcal{P}(^nE;F) $ by $P(x)=\varphi(x)^{n-1}u(x)$. Since
$K_1:=\displaystyle\bigcup_{\varepsilon_i=\pm
1}(\varepsilon_1K+\varepsilon_2K+\cdots+\varepsilon_nK)$ is a compact
subset of $E$, by assumption there is a polynomial $Q\in
\mathcal{P}_{\mathcal{L}[\mathcal{I}]}(^n E;F)$ such that
$\|Q(x)-P(x)\|<\frac{n!\varepsilon}{n}$ for every $x\in K_1$. By the polarization formula, for every $(x_1, x_2, \ldots ,
x_n)\in K\times K\cdots \times K$ we have
$$
 \|\check{Q}(x_1,
x_2, \ldots , x_n)-\check{P}(x_1, x_2, \ldots , x_n)  \| \hspace*{15em}$$
$$=
\left\|\dfrac{1}{n!2^n}\sum_{\varepsilon_i=\pm 1}
\varepsilon_1\varepsilon_2\cdots \varepsilon_n \left[Q\left(\sum_{i=1}^{n}\varepsilon_i
x_i\right)- P\left(\sum_{i=1}^{n}\varepsilon_i x_i\right)\right]\right\|
<\dfrac{\varepsilon}{n}.
$$
From
$$\check{P}(x, a, \ldots , a)=
\dfrac{1}{n}u(x)+\dfrac{(n-1)}{n}\varphi(x)u(a),$$ it results that
$$\|n\check{Q}(x, a, \ldots , a)-u(x)-(n-1)\varphi(x)u(a) \|\hspace*{15em}
$$
$$ = n\left\|\check{Q}(x, a, \ldots ,
a)-\left(\dfrac{1}{n}u(x)+\dfrac{(n-1)}{n}\varphi(x)u(a)\right)\right\|
<\varepsilon$$
 for every $x\in K$. Considering $S=n\check{Q}(\cdot, a, \ldots ,
a)-(n-1)\varphi(\cdot)u(a)\in \mathcal{L}(E;F)$, we have $\|S(x)-u(x) \| <\varepsilon$
 for every $x\in K$. Let us check that $S\in \mathcal{I}(E;F)$. Indeed, as $Q\in
\mathcal{P}_{\mathcal{L}[\mathcal{I}]}(^n E;F)$, there are a Banach space $G$, an operator $v\in \mathcal{I}(E;G)$ and a polynomial $R\in \mathcal{P}(^n G;F)$ such that $Q=R\circ v$. Define
$T\colon G\longrightarrow F$ by $T(y)=\check{R}(y, v(a), \ldots ,
v(a))$. Then $T\circ v
\in \mathcal{I}(E;F)$ and
$$T\circ v(x) = T(v(x))=\check{R}(v(x), v(a), \ldots , v(a))=
 \check{Q}(x, a, \ldots , a),$$
 for every $x \in E$, proving that $\check{Q}(\cdot, a, \ldots , a)\in \mathcal{I}(E;F)$. On the other hand, the operator
$\varphi(\cdot)u(a)\in \mathcal{I}(E;F)$ as it is a finite rank
operator. Thus $S\in \mathcal{I}(E;F)$ and
$\mathcal{L}(E;F)=\overline{\mathcal{I}(E;F)}^{\tau_c}$. Calling on
Proposition \ref{charac} we have that $E$ has $\cal{I}$-AP.

\medskip

\noindent (a) $\Longrightarrow$ (d) Let $P\in \mathcal{P}(^n F;E)$, $K$ be a compact subset of $E$ and $\varepsilon >0$.
Since $P(K)$ is a compact subset of $E$ and $E$ has the $\mathcal{I}$-approximation property,
there is an operator $T\in \mathcal{I}(E;E)$ such that $\|T(z)-z\|< \varepsilon$ for every $z\in P(K)$.
 Hence
$\|T(P(x))-P(x)\|< \varepsilon $ for every $x\in K$. Since $T\circ P\in \mathcal{I}\circ\mathcal{P}(^n
F;E)$ we have that $P\in \overline{\mathcal{I}\circ\mathcal{P}(^n
F;E)}^{\tau_c}$.

\medskip

\noindent (e) $\Longrightarrow$ (a) The same argument of (c) $\Longrightarrow$ (a), {\it mutatis mutandis}, works in this case. We just sketch the proof: given an operator $u\in \mathcal{L}(F;E)$,  a compact set $K \subseteq F$ and $\varepsilon >0$, take $\varphi\in
F'$, $\varphi\neq 0$, and $a\in K$ such that $\varphi(a)=1$.
Defining $P=\varphi(\cdot)^{n-1}u(\cdot)\in \mathcal{P}(^nF;E) $ and a compact subset $K_1$ of $F$ as before, by assumption there is a polynomial $Q\in \mathcal{I}\circ \mathcal{P}(^n
F;E)$ such that $\|Q(x)-P(x)\|<\frac{n!\varepsilon}{n}$ for every
$x\in K_1$. Define $S=n\check{Q}(\cdot, a, \ldots ,
a)-(n-1)\varphi(\cdot)u(a) \in {\cal L}(F;E)$ and proceed exactly as above to get $\|S(x)-u(x) \| <\varepsilon$
 for every $x\in K$. Write $Q=v\circ R$ with
$v\in \mathcal{I}(G;E)$ and $R\in \mathcal{P}(^n F;G)$ and define $T \in {\cal L}(F;G)$ by $T(y)=\check{R}(y, a, \ldots ,
a)$. Thus $v\circ T = \check Q(\cdot, a, \ldots, a) \in \mathcal{I}(F;E)$ and this implies that $S\in \mathcal{I}(F;E)$.

\medskip

\noindent Since (b) $\Longrightarrow$ (c) and (d) $\Longrightarrow$ (e) are obvious and (b) $\Longrightarrow$ (a) and (c) $\Longrightarrow$ (a) follow by taking $n = 1$ in Proposition \ref{charac}, the first part of the proof is complete.

Assume now that $\mathcal{L}[\mathcal{I}]
\subseteq \mathcal{I}\circ \mathcal{L}$.\\
(a) $\Longrightarrow$ (f) $E$ has $\mathcal{I}$-AP by assumption. Let $n\in \mathbb{N}$, $P\in \mathcal{P}(^n
E;F)$, $K$ be a compact subset of $E$ and $\varepsilon >0$. Note
that $P= P_L\circ \sigma_n$ where $\sigma_n\in
\mathcal{P}(^n E; \hat\otimes_\pi^{n,s}E)$ is the canonical $n$-homogeneous polynomial defined by $\sigma_n(x)=
x\otimes \cdots \otimes x$ and $P_L\in
\mathcal{L}(\hat\otimes_\pi^{n,s}E; F)$ is the linearization of $P$, that is $P_L(x\otimes \cdots \otimes x)=P(x)$. By
Corollary \ref{cortens} we have that $\hat\otimes_\pi^{n,s}E$ has $\mathcal{I}$-AP, hence $\mathcal{L}(\hat\otimes_\pi^{n,s}E;
F)=\overline{\mathcal{I}(\hat\otimes_\pi^{n,s}E; F)}^{\tau_c}$ by Proposition \ref{charac}. So for the compact subset
$\sigma(K)$ of $\hat\otimes_\pi^{n,s}E$
there is an operator $u\in \mathcal{I}(\hat\otimes_\pi^{n,s}E; F)$ such that
$$\|u \circ \sigma_n(x) - P(x)\| = \|u(\sigma_{n}(x))-P_L(\sigma_n(x)) \|<\varepsilon$$
for every $x\in K$. Since $Q=u\circ \sigma_{n}\in \mathcal{I}\circ\mathcal{P}(^nE;F)$ we
have that $P\in
\overline{\mathcal{I}\circ\mathcal{P}(^nE;F)}^{\tau_c} $.\\
(f) $\Longrightarrow$ (g)  is obvious and (g) $\Longrightarrow$ (a) follows from a repetition of the argument of the proofs of (c) $\Longrightarrow$ (a) and (e) $\Longrightarrow$ (a), therefore the proof is complete.
\end{proof}

The spaces $\mathcal{P}_{\mathcal{L}[\mathcal{I}]}(^n
E;E)$ and $\mathcal{I}\circ\mathcal{P}(^nE;E)$ are often different. We have obtained situations where, even though different, their $\tau_c$-closures  coincide:

\begin{corollary}\label{coincidence}
Let $\mathcal{I}$ be an operator ideal.\\
{\rm (a)} If the Banach spaces $E$ and $F$ have the $\mathcal{I}$-approximation property, then $\overline{\mathcal{P}_{\mathcal{L}[\mathcal{I}]}(^n
E;F)}^{\tau_c}=\mathcal{P}(^nE;F
)=\overline{\mathcal{I}\circ\mathcal{P}(^n F;E)}^{\tau_c}$ for every $n\in \mathbb{N}$.\\
{\rm (b)} A Banach space $E$ has the $\mathcal{I}$-approximation property if and only if $\overline{\mathcal{P}_{\mathcal{L}[\mathcal{I}]}(^n
E;E)}^{\tau_c}=\mathcal{P}(^nE;E
)=\overline{\mathcal{I}\circ\mathcal{P}(^n E;E)}^{\tau_c}$ for every $n\in \mathbb{N}$.
\end{corollary}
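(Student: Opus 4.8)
The plan is to derive both statements from Theorem \ref{polideal} essentially for free, since that theorem already packages together the two relevant density conditions. For part (a), assume $E$ and $F$ have the $\mathcal{I}$-AP. Since $E$ has $\mathcal{I}$-AP, condition (b) of Theorem \ref{polideal} gives $\mathcal{P}(^nE;F)=\overline{\mathcal{P}_{\mathcal{L}[\mathcal{I}]}(^nE;F)}^{\tau_c}$ for every $n$; this handles the left-hand equality. For the right-hand equality, apply condition (d) of Theorem \ref{polideal} to the space $E$ (which has $\mathcal{I}$-AP), taking $F$ as the "variable" Banach space there: this yields $\mathcal{P}(^nF;E)=\overline{\mathcal{I}\circ\mathcal{P}(^nF;E)}^{\tau_c}$ for every $n$ and every Banach space $F$. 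Putting the two equalities side by side gives the chain displayed in (a). Note that the hypothesis on $F$ is only needed for the first equality; the second holds for all $F$ merely from $E$ having $\mathcal{I}$-AP.

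For part (b), the forward direction is immediate: if $E$ has $\mathcal{I}$-AP, apply part (a) with $F=E$ (the space $E$ has $\mathcal{I}$-AP, so the hypotheses of (a) are met) to obtain the displayed triple equality for every $n$. For the converse, suppose $\overline{\mathcal{P}_{\mathcal{L}[\mathcal{I}]}(^nE;E)}^{\tau_c}=\mathcal{P}(^nE;E)$ for every $n\in\mathbb{N}$ (we only need one of the two equalities, and in fact only for one value of $n$). Specializing to $n=1$ we get $\mathcal{L}(E;E)=\overline{\mathcal{P}_{\mathcal{L}[\mathcal{I}]}(^1E;E)}^{\tau_c}$. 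Since $\mathcal{P}_{\mathcal{L}[\mathcal{I}]}(^1E;E)=\mathcal{I}(E;E)$ (a $1$-homogeneous polynomial of type $\mathcal{L}[\mathcal{I}]$ is just a linear operator factoring through an $\mathcal{I}$-operator, which by the ideal property is itself in $\mathcal{I}$), this says exactly that $E$ has $\mathcal{I}$-AP. Alternatively, and more in the spirit of Theorem \ref{polideal}, the equality for some $n$ is precisely condition (c) of that theorem (with the fixed space $F=E$ playing the role of the "every Banach space $F$" — here we only need it for $F=E$, which is weaker, but condition (c) as stated requires it for every $F$; so the cleanest route is the $n=1$ specialization above, or to observe that the hypothesis with $F=E$ and varying $n$ is enough to rerun the $(c)\Rightarrow(a)$ argument of Theorem \ref{polideal} verbatim since that argument only ever uses $F=E$... which it does not, so the $n=1$ route is the honest one).

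The only subtlety worth flagging is this last point: Theorem \ref{polideal}(c) is stated with a universal quantifier over $F$, whereas Corollary \ref{coincidence}(b) fixes $F=E$. So for the converse in (b) one should not simply cite Theorem \ref{polideal}(c) $\Rightarrow$ (a); instead one takes $n=1$ in the hypothesis, where $\mathcal{P}_{\mathcal{L}[\mathcal{I}]}(^1E;E)=\mathcal{I}(E;E)$ and the condition reduces directly to the definition of $\mathcal{I}$-AP. Everything else is a routine matter of matching hypotheses to the already-proved equivalences in Theorem \ref{polideal}, so there is no real obstacle — the content is entirely in Theorem \ref{polideal}, and this corollary is a repackaging.
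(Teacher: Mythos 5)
Your overall strategy --- reading the corollary off the equivalences in Theorem \ref{polideal}, which is all the paper itself does (it prints no separate proof) --- is the intended one, and your part (b) is correct. In particular the $n=1$ specialization, using $\mathcal{P}_{\mathcal{L}[\mathcal{I}]}(^1E;E)=\mathcal{I}(E;E)=\mathcal{I}\circ\mathcal{P}(^1E;E)$ so that the hypothesis at $n=1$ literally becomes the definition of $\mathcal{I}$-AP, is the right way to get the converse without the universal quantifier over $F$ that conditions (c) and (e) of Theorem \ref{polideal} would otherwise demand; you were right to be suspicious of citing those implications directly.

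Part (a), however, goes wrong at the second equality. The two statements you produce, namely $\mathcal{P}(^nE;F)=\overline{\mathcal{P}_{\mathcal{L}[\mathcal{I}]}(^nE;F)}^{\tau_c}$ and $\mathcal{P}(^nF;E)=\overline{\mathcal{I}\circ\mathcal{P}(^nF;E)}^{\tau_c}$, concern two \emph{different} spaces ($\mathcal{P}(^nE;F)$ versus $\mathcal{P}(^nF;E)$), so they cannot be ``put side by side'' to form a chain of equalities. Moreover your accounting of the hypotheses is backwards: the first equality uses only that $E$ has $\mathcal{I}$-AP (Theorem \ref{polideal}, (a)$\Rightarrow$(b), which holds for \emph{every} $F$), not that $F$ does; so on your reading the hypothesis on $F$ is never used anywhere, which should have been a warning sign. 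What the corollary is actually asserting --- see the sentence introducing it and the Example that follows it, both of which compare $\mathcal{P}_{\mathcal{L}[\mathcal{I}]}$ and $\mathcal{I}\circ\mathcal{P}$ as two subspaces of the \emph{same} polynomial space whose $\tau_c$-closures coincide --- is that $\mathcal{I}\circ\mathcal{P}(^nE;F)$ is also $\tau_c$-dense in $\mathcal{P}(^nE;F)$; the printed ``$(^nF;E)$'' in the composition term is a slip of notation. That density is exactly where the hypothesis on $F$ enters: apply (a)$\Rightarrow$(d) of Theorem \ref{polideal} to the space $F$ (which has $\mathcal{I}$-AP), with $E$ now playing the role of the auxiliary Banach space, to obtain $\mathcal{P}(^nE;F)=\overline{\mathcal{I}\circ\mathcal{P}(^nE;F)}^{\tau_c}$. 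With that one-line correction part (a) is complete.
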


\begin{example}\rm It is not difficult to check that neither $\mathcal{P}_{\mathcal{L}[\mathcal{W}]}(^2 \ell_1; \ell_1) \subseteq \mathcal{W} \circ \mathcal{P} (^2 \ell_1; \ell_1)$ nor $\mathcal{W} \circ \mathcal{P} (^2 \ell_1; \ell_1) \subseteq  \mathcal{P}_{\mathcal{L}[\mathcal{W}]}(^2 \ell_1; \ell_1)$ (see \cite[Examples 27 and 28]{Botelho1}). Nevertheless, by Corollary \ref{coincidence}(b) both subspaces are $\tau_c$-dense in $\mathcal{P}(^2 \ell_1;\ell_1)$ because $\ell_1$ has the approximation property (hence has the weakly compact approximation property).
\end{example}

The following result appears in \cite{Caliskan4}:

\begin{theorem}\label{errado}{\rm \cite[Theorem 11]{Caliskan4}} The following are equivalent for a Banach space $E$:\\
{\rm (a)} $E$ has the weakly compact approximation property.\\
{\rm (b)} $\mathcal{P}(^nE;F
)=\overline{\mathcal{P}_{[\mathcal{W}]}(^n
E;F)}^{\tau_c}$ for every $n\in \mathbb{N}$ and every Banach
space $F$.\\
{\rm (c)} $\mathcal{P}(^nE;F
)=\overline{\mathcal{P}_{[\mathcal{W}]}(^n
E;F)}^{\tau_c}$ for some $n\in \mathbb{N}$ and every Banach
space $F$.
\end{theorem}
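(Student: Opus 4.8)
The plan is to prove Theorem \ref{errado} by adapting the scheme already used for Theorem \ref{polideal}, replacing the factorization ideal $\mathcal{L}[\mathcal{W}]$ by the linearization ideal $[\mathcal{W}]$. The implication (b) $\Longrightarrow$ (c) is trivial. For (a) $\Longrightarrow$ (b), I would argue exactly as in (a) $\Longrightarrow$ (b) of Theorem \ref{polideal}: given $P \in \mathcal{P}(^nE;F)$, a compact $K \subseteq E$ and $\varepsilon > 0$, use uniform continuity of $P$ on $K$ to get $\delta > 0$, then use WCAP to get $T \in \mathcal{W}(E;E)$ with $\|T(x) - x\| < \delta$ on $K$, so that $\|P(T(x)) - P(x)\| < \varepsilon$ on $K$. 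The only extra point is that $P \circ T \in \mathcal{P}_{[\mathcal{W}]}(^nE;F)$: one checks that $\overline{(P\circ T)}(x) = \widecheck{P\circ T}(x,\cdot,\ldots,\cdot)$ factors as (a fixed multilinear-type map into $\mathcal{P}(^{n-1}E;F)$) composed with $T$, hence lies in $\mathcal{W}$ by the ideal property; equivalently $P \circ T \in \mathcal{P}_{\mathcal{L}[\mathcal{W}]} \subseteq \mathcal{P}_{[\mathcal{W}]}$ since $\mathcal{L}[\mathcal{I}] \subseteq [\mathcal{I}]$ for any operator ideal $\mathcal{I}$ (the linear map $\overline{Q\circ u}$ equals $x \mapsto \widecheck{Q\circ u}(x,\cdot,\ldots,\cdot)$ which factors through $u$). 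This last containment should be isolated as a one-line observation before the proof.

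For (c) $\Longrightarrow$ (a) I would mimic (c) $\Longrightarrow$ (a) of Theorem \ref{polideal}. Given $u \in \mathcal{L}(E;F)$, a compact $K \subseteq E$, $\varepsilon > 0$, pick $\varphi \in E'$, $\varphi \neq 0$, and $a \in K$ with $\varphi(a) = 1$; set $P(x) = \varphi(x)^{n-1}u(x) \in \mathcal{P}(^nE;F)$ and form the compact set $K_1 = \bigcup_{\varepsilon_i = \pm 1}(\varepsilon_1 K + \cdots + \varepsilon_n K)$. By hypothesis there is $Q \in \mathcal{P}_{[\mathcal{W}]}(^nE;F)$ with $\|Q(x) - P(x)\| < \frac{n!\varepsilon}{n}$ on $K_1$; the polarization formula gives $\|\widecheck Q(x_1,\ldots,x_n) - \widecheck P(x_1,\ldots,x_n)\| < \frac{\varepsilon}{n}$ on $K^n$, and from $\widecheck P(x,a,\ldots,a) = \frac1n u(x) + \frac{n-1}{n}\varphi(x)u(a)$ one gets $\|S(x) - u(x)\| < \varepsilon$ on $K$, where $S = n\widecheck Q(\cdot,a,\ldots,a) - (n-1)\varphi(\cdot)u(a)$. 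It remains to show $S \in \mathcal{W}(E;F)$, and this is the step where the linearization ideal must be handled on its own terms: since $Q \in \mathcal{P}_{[\mathcal{W}]}(^nE;F)$, the operator $\overline{Q} \colon E \to \mathcal{P}(^{n-1}E;F)$, $\overline{Q}(x)(y) = \widecheck Q(x,y,\ldots,y)$, is weakly compact. I would then realise $\widecheck Q(\cdot,a,\ldots,a)$ as $\mathrm{ev}_a \circ \overline{Q}$, where $\mathrm{ev}_a \colon \mathcal{P}(^{n-1}E;F) \to F$ is the (bounded linear) evaluation $R \mapsto R(a)$; by the ideal property $\mathrm{ev}_a \circ \overline{Q} \in \mathcal{W}(E;F)$. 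Adding the finite rank operator $(n-1)\varphi(\cdot)u(a)$ keeps us in $\mathcal{W}(E;F)$, so $S \in \mathcal{W}(E;F)$, whence $\mathcal{L}(E;F) = \overline{\mathcal{W}(E;F)}^{\tau_c}$ and Proposition \ref{charac} gives WCAP.

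The main obstacle is the identity $\widecheck Q(\cdot,a,\ldots,a) = \mathrm{ev}_a \circ \overline{Q}$ and, more precisely, making sure the target space $\mathcal{P}(^{n-1}E;F)$ and the evaluation map are used correctly: one must check that $\overline{Q}(x)$ is genuinely the $(n-1)$-homogeneous polynomial $y \mapsto \widecheck Q(x,y,\ldots,y)$, that $\mathrm{ev}_a$ is a bounded linear functional-valued operator (indeed $\|\mathrm{ev}_a\| \le \|a\|^{n-1}$), and that composing a weakly compact operator with a bounded operator on either side stays weakly compact — which is precisely the ideal property of $\mathcal{W}$. This is the one place where the argument genuinely differs from Theorem \ref{polideal}, whose factorization ideal made the corresponding verification almost immediate; here one leans on the structure of the linearization method. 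Everything else — polarization, the choice of $\varphi$ and $a$, the estimate bookkeeping, the reduction via $n=1$ — is a routine transcription of the earlier proof, so I would state it briefly and refer back to the proof of Theorem \ref{polideal} rather than repeat it in full.
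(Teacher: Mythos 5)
Your argument is correct, but it is not the route the paper takes. The paper does not reprove Theorem \ref{errado} directly: it records that \c{C}al\i\c{s}kan's original proof has a gap (the implicit use of the nontrivial inclusion $\mathcal{P}_{[\mathcal{W}]}\subseteq \mathcal{P}_{\mathcal{L}[\mathcal{W}]}$) and repairs the statement in Proposition \ref{certo} by quoting Braunss--Junek's identity $[\mathcal{I}]=\mathcal{L}[\mathcal{I}]$ for \emph{closed injective} ideals and then invoking Theorem \ref{polideal}; since $\mathcal{W}$ is closed and injective, Theorem \ref{errado} follows. You instead give a self-contained direct proof: for (a)$\Rightarrow$(b) you only need the easy inclusion $\mathcal{L}[\mathcal{I}]\subseteq[\mathcal{I}]$ (indeed $\overline{Q\circ u}=C_u\circ\overline{Q}\circ u$ with $C_u(R)=R\circ u$, so the ideal property applies), and for (c)$\Rightarrow$(a) you replace the missing factorization of $Q$ through a weakly compact operator by the identity $\check{Q}(\cdot,a,\ldots,a)=\mathrm{ev}_a\circ\overline{Q}$, which again needs only the ideal property. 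Both identities check out, and the polarization bookkeeping is the same as in Theorem \ref{polideal}, so the proof is sound; in particular you correctly isolate and sidestep exactly the step where \c{C}al\i\c{s}kan's argument breaks down. What the two approaches buy: the paper's fix is a two-line corollary but imports an external structural theorem and the hypotheses ``closed and injective''; your argument uses nothing beyond the definition of an operator ideal and therefore proves the analogue of Proposition \ref{certo} with $\mathcal{P}_{[\mathcal{I}]}$ for an \emph{arbitrary} operator ideal $\mathcal{I}$, which is strictly more general (note that for non-closed or non-injective ideals $\mathcal{P}_{[\mathcal{I}]}$ may genuinely exceed $\mathcal{P}_{\mathcal{L}[\mathcal{I}]}$, so this is a different, and stronger, statement than Theorem \ref{polideal}(b)--(c)). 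It would be worth stating the observation $\mathcal{L}[\mathcal{I}]\subseteq[\mathcal{I}]$ as a separate lemma, as you suggest.
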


Unfortunately there is a gap in the proof of this theorem (see the MathSciNet review of this paper by C. Boyd). In this direction we have:

\begin{proposition}\label{certo} Let $\cal I$ be a closed injective operator ideal. The following are equivalent for a Banach space $E$:\\
{\rm (a)} $E$ has the $\cal I$-approximation property.\\
{\rm (b)} $\mathcal{P}(^nE;F
)=\overline{\mathcal{P}_{[\mathcal{I}]}(^n
E;F)}^{\tau_c}$ for every $n\in \mathbb{N}$ and every Banach
space $F$.\\
{\rm (c)} $\mathcal{P}(^nE;F
)=\overline{\mathcal{P}_{[\mathcal{I}]}(^n
E;F)}^{\tau_c}$ for some $n\in \mathbb{N}$ and every Banach
space $F$.
\end{proposition}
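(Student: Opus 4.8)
The plan is to mimic the structure of the proof of Theorem~\ref{polideal}, replacing the factorization ideal $\mathcal{P}_{\mathcal{L}[\mathcal I]}$ by the linearization ideal $\mathcal{P}_{[\mathcal I]}$ and exploiting that $\mathcal I$ is closed and injective. First I would prove (a)~$\Longrightarrow$~(b). Given $P\in\mathcal P(^nE;F)$, a compact set $K\subseteq E$ and $\varepsilon>0$, by the $\mathcal I$-AP there is $T\in\mathcal I(E;E)$ with $\|T(x)-x\|$ small on $K$; I would then consider the polynomial $P\circ T$ and check that it lies in $\mathcal P_{[\mathcal I]}(^nE;F)$. The point here is that $(\widehat{P\circ T})(x)=\check P(Tx,Tx,\dots,Tx)$ and the associated linear operator $\overline{P\circ T}\colon E\to\mathcal P(^{n-1}E;F)$ factors through $T$ (indeed $\overline{P\circ T}=R\circ T$ for a suitable bounded $R$), so it belongs to $\mathcal I$ by the ideal property; closedness of $\mathcal I$ is not even needed for this direction. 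Uniform continuity of $P$ on the bounded set $\{Tx:x\in K\}\cup K$ gives $\|P(Tx)-P(x)\|<\varepsilon$ on $K$, so $P\in\overline{\mathcal P_{[\mathcal I]}(^nE;F)}^{\tau_c}$.

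Next, (b)~$\Longrightarrow$~(c) is trivial, so the crux is (c)~$\Longrightarrow$~(a). Here I would run the same device as in (c)~$\Longrightarrow$~(a) of Theorem~\ref{polideal}: fix $u\in\mathcal L(E;F)$, a compact $K\subseteq E$, $\varepsilon>0$, pick $\varphi\in E'\setminus\{0\}$ and $a\in K$ with $\varphi(a)=1$, and set $P(x)=\varphi(x)^{n-1}u(x)$. By hypothesis approximate $P$ in $\tau_c$ on the enlarged compact set $K_1$ by some $Q\in\mathcal P_{[\mathcal I]}(^nE;F)$; via the polarization formula this yields approximation of the symmetric forms, and then $S:=n\check Q(\cdot,a,\dots,a)-(n-1)\varphi(\cdot)u(a)$ satisfies $\|S(x)-u(x)\|<\varepsilon$ on $K$ exactly as before. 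It remains to show $S\in\mathcal A(E;F)=\overline{\mathcal I(E;F)}$, which combined with Proposition~\ref{charac} (and the equivalence $\mathcal I$-AP $=\overline{\mathcal I}$-AP of Proposition~\ref{propos}) will finish the proof. The finite-rank term $\varphi(\cdot)u(a)$ is harmless, so everything reduces to showing $\check Q(\cdot,a,\dots,a)\in\overline{\mathcal I(E;F)}$.

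The main obstacle, and the place where injectivity and closedness of $\mathcal I$ enter, is precisely this last claim: for $Q\in\mathcal P_{[\mathcal I]}$ one does \emph{not} get a factorization $Q=R\circ v$ with $v\in\mathcal I$ (that is the factorization ideal, not the linearization ideal), so the clean argument of Theorem~\ref{polideal} is unavailable. Instead I would argue as follows. The operator $\overline Q\colon E\to\mathcal P(^{n-1}E;F)$, $\overline Q(x)(y)=\check Q(x,y,\dots,y)$, lies in $\mathcal I$ by definition. Evaluation at the fixed point $a$ is a bounded operator $\mathrm{ev}_a\colon\mathcal P(^{n-1}E;F)\to F$, $R\mapsto R(a)$, so $\mathrm{ev}_a\circ\overline Q\colon x\mapsto\check Q(x,a,\dots,a)$ belongs to $\mathcal I(E;F)$ directly by the ideal property --- and this already gives $\check Q(\cdot,a,\dots,a)\in\mathcal I(E;F)$ without needing injectivity or closedness at all. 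Hence $S\in\mathcal I(E;F)\subseteq\overline{\mathcal I(E;F)}^{\tau_c}$, and $E$ has $\mathcal I$-AP by Proposition~\ref{charac}. Thus I expect the hypotheses ``closed injective'' are used only to guarantee that $\mathcal P_{[\mathcal I]}$ behaves well enough for statement~(b) to be the ``right'' companion of the $\mathcal I$-AP (e.g.\ to ensure $\mathcal P_{[\mathcal I]}$ is a genuine polynomial ideal containing the finite-type polynomials, and that its $\tau_c$-closure is stable in the needed way); I would double-check in writing up whether injectivity is truly needed in (c)~$\Longrightarrow$~(a) or whether, as the sketch suggests, it is only needed to make the statement of~(b) nonvacuous/consistent with the $n=1$ case where $\mathcal P_{[\mathcal I]}(^1E;F)=\mathcal I(E;F)$.
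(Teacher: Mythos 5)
Your proof is correct, but it takes a genuinely different route from the paper. The paper disposes of the proposition in one line: by a result of Braunss and Junek, $[\mathcal{I}]=\mathcal{L}[\mathcal{I}]$ whenever $\mathcal I$ is closed and injective, so the statement reduces to the equivalence (a) $\Leftrightarrow$ (b) $\Leftrightarrow$ (c) of Theorem \ref{polideal} for the factorization ideal. You instead rerun the proof of Theorem \ref{polideal} directly for the linearization ideal, and both of your key observations are sound: for (a) $\Rightarrow$ (b), $\overline{P\circ T}=R\circ T$ with $R$ bounded, so $P\circ T\in\mathcal{P}_{[\mathcal I]}(^nE;F)$ whenever $T\in\mathcal I(E;E)$ (equivalently, $\mathcal{P}_{\mathcal{L}[\mathcal I]}\subseteq\mathcal{P}_{[\mathcal I]}$ holds for every operator ideal); and for (c) $\Rightarrow$ (a), the identity $\check Q(\cdot,a,\dots,a)=\mathrm{ev}_a\circ\overline Q$ with $\mathrm{ev}_a\in\mathcal{L}(\mathcal{P}(^{n-1}E;F);F)$ places $\check Q(\cdot,a,\dots,a)$ in $\mathcal{I}(E;F)$ by the ideal property alone. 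This evaluation-operator trick is the genuinely new ingredient: it replaces the factorization $Q=R\circ v$ that the paper's proof of Theorem \ref{polideal} relies on and that is unavailable for the linearization ideal. As to what each approach buys: the paper's argument is shorter and ties the proposition to a known structural identity between $[\mathcal I]$ and $\mathcal L[\mathcal I]$; yours is self-contained and, as you suspected, uses neither closedness nor injectivity anywhere, so it establishes the equivalence for an \emph{arbitrary} operator ideal --- a strictly stronger statement than the one the paper proves (the hypotheses are needed only if one wants to identify $\mathcal{P}_{[\mathcal I]}$ with $\mathcal{P}_{\mathcal L[\mathcal I]}$). The only point to tidy in the write-up is the same one left implicit in the paper's Theorem \ref{polideal}: in (c) $\Rightarrow$ (a) enlarge $K$ if necessary so that it contains a nonzero point $a$, guaranteeing the existence of $\varphi\in E'$ with $\varphi(a)=1$.
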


\begin{proof} Just combine Theorem \ref{polideal} with the fact that $[\mathcal{I}] = {\cal L}[\mathcal{I}]$ whenever the operator ideal $\cal I$ is closed and injective (see \cite{braunss-junek}).
\end{proof}

Recalling that $\cal W$ is closed and injective, Proposition \ref{certo} fixes Theorem \ref{errado} and generalizes it to arbitrary closed injective operator ideals.

\section{Spaces of holomorphic functions}
The approximation property and its variants in spaces of holomorphic functions and their preduals have been largely investigated (see, e.g., \cite{as, Rueda, Caliskan3, Caliskan1, dm1, dm2, mu}). In this section we study the $\cal I$-approximation property in spaces of holomorphic functions of bounded type, spaces of weakly uniformly continuous holomorphic functions, spaces of bounded holomorphic functions and/or their preduals. For background on infinite-dimensional holomorphy we refer to \cite{Dineen, Mujica}. An important issue of this section is the combination of results from different sections of the paper.

\indent All spaces in this section are supposed to be complex.\\
\indent Spaces of holomorphic functions, spaces of bounded holomorphic functions and spaces of weakly uniformly continuous holomorphic functions, as well as their corresponding preduals, are locally convex spaces, so we have to say a few words about the definition of the $\cal I$-approximation property in the setting of locally convex spaces. The definition of operator ideals (on Banach spaces) can be naturally generalized to the concept of operator ideals on locally convex spaces (details can be found in \cite[Chapter 29]{Pietsch}). We say that an operator ideal $\cal U$ on locally convex spaces is an extension of an operator ideal $\cal I$ on Banach spaces if $\mathcal{U}(E;F)=\mathcal{I}(E;F)$ for all Banach spaces $E$
and $F$. There are several ways to extend an operator ideal on Banach spaces to an operator ideal on locally convex spaces (see \cite[Section 29.5]{Pietsch}). In this paper we shall work with the smallest of such natural extensions, which we describe next. Given an operator ideal $\cal I$ on Banach spaces, an operator $S\in \mathcal{L}(U;V)$ between locally convex spaces belongs to the {\it inferior extension of $\mathcal{I}$} if there exist Banach spaces $E$ and $F$ and operators $A\in \mathcal{L}(U, E)$, $T\in \mathcal{I}(E, F)$ and $Y\in \mathcal{L}(F, V)$ such that $S=Y\circ T\circ A$. In this case, for the sake of simplicity, we still write $S \in \mathcal{I}(U;V)$. Of course we can consider the compact-open topology on ${\cal L}(U;U)$ for a locally convex space $U$, so Definition \ref{def} makes sense for an operator ideal $\cal I$ on Banach spaces and a locally convex space $U$, hence the $\cal I$-approximation property is well defined for locally convex spaces.\\
\indent Unless explicitly stated otherwise, an operator ideal means an operator ideal on Banach spaces and an statement like ${\cal I}_1 \subseteq {\cal I}_2$ means that ${\cal I}_1(E;F) \subseteq {\cal I}_2(E;F)$ for all Banach spaces $E$ and $F$.

\begin{remark}\label{remark}\rm It is easy to see that Propositions \ref{propos}, \ref{complemented} and \ref{sum} hold true in the realm of locally convex spaces. Of course, in condition (d) of Proposition \ref{propos}, $\|T(x)-x\|$ is replaced by $p(T(x) -x)$ where $p$ is an arbitrary continuous semi-norm. The proof of the locally convex version of Proposition \ref{propos} follows the lines of the proof of \cite[43(1)]{kothe}.
\end{remark}

\begin{definition}\rm  A sequence $\{E_n\}_{n=1}^{\infty}$ of subspaces of a locally convex space $E$ is said to be a {\it decomposition of $E$} if any $x \in E$ can be written in a unique way as $x = \sum_{n=1}^\infty x_n$ with $x_n \in E_n$ for every $n$ and the projection $\sum_{n=1}^\infty x_n \mapsto \sum_{n=1}^m x_n$ is continuous for every $m \in \mathbb{N}$.\\
\indent  Let $\mathcal{S}=\{(\alpha_n)_{n=1}^{\infty}: \alpha_n\in \mathbb{C}
\,\,\,\text{and}
\,\,\,\limsup_{n\rightarrow\infty}|\alpha_n|^{\frac{1}{n}}\leq 1
\} $. A decomposition $\{E_n\}_{n=1}^{\infty}$ of $E$ is an {\it $\mathcal{S}$-absolute decomposition} if
\begin{enumerate}
\item[(1)] $\sum_{n=1}^{\infty} x_n\in E, \, x_n\in E_n$ for all
$n$ and $(\alpha_n)_{n=1}^{\infty}\in \mathcal{S}$ implies
$\sum_{n=1}^{\infty}\alpha_nx_n\in E,$

\item[(2)] If $p$ is a continuous semi-norm on $E$ and
$(\alpha_n)_{n=1}^{\infty}\in \mathcal{S}$ then
$$p_{\alpha}\left(\sum_{n=1}^{\infty}x_n
\right):=\sum_{n=1}^{\infty}|\alpha_n|p(x_n)$$ defines a continuous semi-norm
on $E$. \label{condicao2}
 \end{enumerate}
Further details can be found in \cite[Section 3.3]{Dineen}.
\end{definition}


\begin{lemma}\label{decomposition}
Let $\mathcal{I}$ be an operator ideal.
If $\{E_n\}_{n=1}^{\infty}$ is an  $\mathcal{S}$-absolute decomposition
of the locally convex space $E$, then $E$ has the $\mathcal{I}$-approximation property if and only if each $E_n$
has the  $\mathcal{I}$-approximation property.
\end{lemma}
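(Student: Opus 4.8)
The plan is to prove both implications, the reverse one carrying all the work. For the direct implication I would first note that each $E_n$ is a complemented subspace of $E$: writing $\pi_m$ for the (continuous) $m$-th partial-sum projection of the decomposition and $\pi_0 := 0$, the operator $\pi_n - \pi_{n-1}$ is a continuous projection of $E$ onto $E_n$. Hence, if $E$ has the $\mathcal{I}$-approximation property, the locally convex version of Proposition \ref{complemented} (legitimate by Remark \ref{remark}) gives that each $E_n$ has it.

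For the converse, suppose each $E_n$ has $\mathcal{I}$-AP. By the locally convex form of Proposition \ref{propos} it suffices, given a compact set $K \subseteq E$, a continuous semi-norm $p$ on $E$ and $\varepsilon > 0$, to exhibit $T \in \mathcal{I}(E;E)$ with $p(T(x) - x) < \varepsilon$ for every $x \in K$. The key step --- the only one that genuinely uses the $\mathcal{S}$-absoluteness rather than a bare decomposition --- is that the partial sums approximate the identity uniformly on $K$, i.e. $\sup_{x \in K} p(\pi_m(x) - x) \to 0$. To see this, take the constant sequence $(\alpha_n) \equiv 1$, which lies in $\mathcal{S}$, in condition (2) of the definition of $\mathcal{S}$-absolute decomposition: then $q\big(\sum_k x_k\big) := \sum_k p(x_k)$ is a continuous semi-norm on $E$. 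Consequently the functions $g_m(x) := q(x) - \sum_{k=1}^m p(x_k) = \sum_{k > m} p(x_k)$ are continuous on $E$, nonincreasing in $m$, and converge pointwise to $0$, so Dini's theorem yields $g_m \to 0$ uniformly on the compact set $K$. Since $p(\pi_m(x) - x) = p\big(\sum_{k>m} x_k\big) \le g_m(x)$, I may fix $m$ with $p(\pi_m(x) - x) < \varepsilon/2$ for all $x \in K$.

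Next, $F_m := \pi_m(E)$ endowed with the topology induced from $E$ is topologically isomorphic to the finite direct sum $E_1 \oplus \cdots \oplus E_m$ (the coordinatewise map into the product and the summation map back are mutually inverse and continuous), so $F_m$ has $\mathcal{I}$-AP by the locally convex version of Proposition \ref{sum}. Applying this to the compact set $\pi_m(K) \subseteq F_m$ and the continuous semi-norm $p|_{F_m}$, I get $S \in \mathcal{I}(F_m; F_m)$ with $p(S(y) - y) < \varepsilon/2$ for every $y \in \pi_m(K)$. Let $\iota_m \colon F_m \hookrightarrow E$ be the inclusion and set $T := \iota_m \circ S \circ \pi_m$; since the inferior extension of $\mathcal{I}$ to locally convex spaces has the ideal property, $T \in \mathcal{I}(E;E)$. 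For $x \in K$,
$$ p(T(x) - x) \le p\big(S(\pi_m(x)) - \pi_m(x)\big) + p(\pi_m(x) - x) < \frac{\varepsilon}{2} + \frac{\varepsilon}{2} = \varepsilon, $$
whence $id_E \in \overline{\mathcal{I}(E;E)}^{\tau_c}$ and $E$ has $\mathcal{I}$-AP.

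The only delicate point I expect is the uniform convergence $\pi_m \to id_E$ on compact sets; this is where $\mathcal{S}$-absoluteness is essential, and Dini's theorem is the clean route. Everything else is a routine factorization-and-estimate argument, the only thing to watch being that $F_m$ need not be a Banach space, so that ``$\mathcal{I}$'' on $F_m$ and the composition $\iota_m \circ S \circ \pi_m$ must be read in the sense of the inferior extension of $\mathcal{I}$ to locally convex spaces.
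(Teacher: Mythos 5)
Your proof is correct and follows essentially the same route as the paper's: reduce to the finite partial sum $F_m=\pi_m(E)$, apply the finite-direct-sum result there, and pull back via $\iota_m\circ S\circ\pi_m$, with the easy direction handled by complementation. The only difference is that where the paper cites Dineen's Lemma 3.33 for the uniform convergence $\sup_{x\in K}p(\pi_m(x)-x)\to 0$, you prove it directly using the seminorm $q=p_{(1,1,\ldots)}$ and Dini's theorem, which is a correct, self-contained substitute for that citation.
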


\begin{proof} An adaptation of the proof of \cite[Proposition 1]{Rueda} works in this case. We give the details for the sake of completeness. Suppose that each $E_n$ has the $\mathcal{I}$-approximation
property. Let $K$ denote a compact subset of  $E$, let  $p$ denote
an arbitrary continuous semi-norm on  $E$ satisfying condition
$(2)$ above with $\alpha_n=1$ for all $n$, and let $\varepsilon>0$ be
arbitrary.

Define  $$\pi_{n}\left(\sum_{m=1}^{\infty} x_m \right):=\sum_{m=1}^{n}x_m$$ for all $\sum_{m=1}^{\infty} x_m\in E$ and let $\pi^n=id_E-\pi_n$.
By \cite[Lemma 3.33]{Dineen} there exists a positive integer  $n_0$ such that
 $$\sup\{ p(\pi^{n_0}(x)): x\in K\}<\varepsilon.   $$
Consider  $F_{n_0}:=\pi_{n_0}(E)=\sum_{n=1}^{n_0}E_n$. By the locally convex version of Proposition~\ref{sum} (see Remark \ref{remark}) $F_{n_0}$ has the
$\mathcal{I}$-approximation property. Since  $\pi_{n_0}(K)$ is a
compact subset of  $F_{n_0}$ there exists an operator
$T\in\mathcal{I}(F_{n_0};F_{n_0})$ such that  $$
p(\pi_{n_0}(x)-T(\pi_{n_0}(x)))<\varepsilon   $$  for every $x\in K$.
Using the natural inclusion $i \colon F_{n_0}\hookrightarrow E$ we see that
$R:=i\circ T\circ \pi_{n_0} \in \mathcal{I}(E;E)$. Hence
\begin{eqnarray*}
p(x-R(x))&\leq&p(x-\pi_{n_0}(x))+p(\pi_{n_0}(x)-T(\pi_{n_0}(x))\\
&=&p(\pi^{n_0}(x))+p(\pi_{n_0}(x)-T(\pi_{n_0}(x))<2\varepsilon
\end{eqnarray*}
for every $x\in K$. So $id_E \in \overline{{\cal I}(E;E)}^{\tau_c}$. It follows that $E$ has $\cal I$-AP by the locally convex version of Proposition \ref{propos}.  \\
\indent Conversely, since each $E_n$ is a complemented subspace of $E$ and $E$ has the $\mathcal{I}$-approximation property, by the locally convex version of Proposition~\ref{complemented} it follows that each $E_n$ has $\mathcal{I}$-approximation property as well.
\end{proof}

By $\mathcal{P}_{w}(^nE)$ we mean the closed subspace of $\mathcal{P}(^nE)$ of all continuous $n$-homogeneous polynomials that are weakly continuous on bounded sets. Let $U$ be an open subset of a Banach space $E$. A bounded subset $A$ of $U$ is {\it $U$-bounded} if there is a 0-neighborhood $V$ such that $A+V \subseteq U$. By $\mathcal{H}_b(U;F)$ we denote the space of holomorphic functions $f \colon U \longrightarrow F$, where $F$ is a Banach space, of {\it bounded type}, that is, $f$ is bounded on $U$-bounded sets. If $F = \mathbb{C}$ we simply write $\mathcal{H}_b(U)$. The symbol $\mathcal{H}_{wu}(U)$ stands for the space of all holomorphic functions $f \colon U \longrightarrow \mathbb{C}$ that are weakly uniformly continuous on $U$-bounded sets. When endowed with the topology of uniform convergence on $U$-bounded sets, both $\mathcal{H}_b(U;F)$ and $\mathcal{H}_{wu}(U)$ are locally convex spaces.

\begin{proposition}\label{holomor} Let $\mathcal{I}$ be an operator ideal, $U$ be a balanced open subset of the Banach space $E$ and $F$ be a Banach space.\\
{\rm (a)} $\mathcal{H}_b(U;F)$ has $\mathcal{I}$-$AP$ if and only if $\mathcal{P}(^n E; F)$ has $\mathcal{I}$-$AP$ for every $n\in \mathbb{N}$.\\
{\rm(b)} $\mathcal{H}_{wu}(U)$ has $\mathcal{I}$-$AP$ if and only if $\mathcal{P}_{w}(^nE)$ has $\mathcal{I}$-$AP$ for every $n\in \mathbb{N}$.
\end{proposition}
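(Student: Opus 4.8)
The key structural fact is that $\mathcal{H}_b(U;F)$ (for $U$ balanced) carries an $\mathcal{S}$-absolute decomposition given by the monomial expansion at the origin: every $f \in \mathcal{H}_b(U;F)$ has a Taylor series $f = \sum_{n=0}^\infty P_n$ with $P_n \in \mathcal{P}(^nE;F)$, the convergence being in the topology of uniform convergence on $U$-bounded sets, and the spaces $\{\mathcal{P}(^nE;F)\}_{n=0}^\infty$ form an $\mathcal{S}$-absolute decomposition of $\mathcal{H}_b(U;F)$ (this is standard, see \cite[Section 3.3]{Dineen}; the hypothesis that $U$ is balanced is exactly what makes the Taylor expansion at $0$ converge on all of $U$). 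Similarly, $\mathcal{H}_{wu}(U)$ admits the $\mathcal{S}$-absolute decomposition $\{\mathcal{P}_w(^nE)\}_{n=0}^\infty$ (again see \cite[Section 3.3]{Dineen}; the point is that a holomorphic function is weakly uniformly continuous on $U$-bounded sets if and only if each of its Taylor polynomials is weakly continuous on bounded sets, and the decomposition property transfers). Granting these two facts, both (a) and (b) follow immediately from Lemma \ref{decomposition}: $E$ has the $\mathcal{I}$-approximation property for $\mathcal{H}_b(U;F)$ iff each component $\mathcal{P}(^nE;F)$ does, and likewise with $\mathcal{P}_w(^nE)$ in the weakly uniformly continuous case. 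The component $n = 0$ is the scalar (or $F$-valued constants) which trivially has $\mathcal{I}$-AP, so it may be discarded.

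First I would recall precisely the definition of the locally convex topology on $\mathcal{H}_b(U;F)$ and $\mathcal{H}_{wu}(U)$ (uniform convergence on $U$-bounded sets) and verify that, with these topologies, the Taylor coefficient maps $f \mapsto P_n$ are continuous and that $f = \sum_n P_n$ with unconditional-type convergence controlled by sequences in $\mathcal{S}$ — i.e., check conditions (1) and (2) in the definition of $\mathcal{S}$-absolute decomposition. This is where the balancedness of $U$ is used: for $(\alpha_n) \in \mathcal{S}$ and $f = \sum P_n \in \mathcal{H}_b(U;F)$, one shows $\sum \alpha_n P_n$ again defines an element of $\mathcal{H}_b(U;F)$ via a Cauchy-estimate argument on $U$-bounded sets, and that the seminorms $p_\alpha$ are continuous. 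For part (b) one additionally needs that the decomposition of $\mathcal{H}_{wu}(U)$ really lives inside the $\mathcal{P}_w(^nE)$'s: a holomorphic function of the form $\sum P_n$ is weakly uniformly continuous on $U$-bounded sets exactly when every $P_n$ is weakly continuous on bounded sets — this is a known characterization (cf. \cite{Dineen}). Then invoke Lemma \ref{decomposition} directly.

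**Main obstacle.** The serious content is not the abstract application of Lemma \ref{decomposition} but the verification that these monomial decompositions really are $\mathcal{S}$-absolute decompositions in the precise technical sense of the Definition preceding Lemma \ref{decomposition} — in particular condition (2), that $p_\alpha(\sum x_n) := \sum |\alpha_n| p(x_n)$ is a continuous seminorm for each continuous seminorm $p$ and each $(\alpha_n) \in \mathcal{S}$. For $\mathcal{H}_b(U;F)$ this follows from the Cauchy integral formula applied on discs contained in the balanced set $U$, giving the estimate $\|P_n\|_A \le \|f\|_{rA}/r^n$ for suitable radii $r$ with $rA$ still $U$-bounded, which allows $\limsup |\alpha_n|^{1/n} \le 1$ to be absorbed. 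The cleanest route, as the proof of Lemma \ref{decomposition} itself suggests, is simply to cite \cite[Section 3.3]{Dineen} (and, for the $\mathcal{H}_{wu}$ case, the relevant identification there) for the fact that $\{\mathcal{P}(^nE;F)\}_n$ and $\{\mathcal{P}_w(^nE)\}_n$ are $\mathcal{S}$-absolute decompositions of $\mathcal{H}_b(U;F)$ and $\mathcal{H}_{wu}(U)$ respectively when $U$ is balanced, and then deduce the proposition in two lines from Lemma \ref{decomposition}. So the proof is short modulo a citation; the only place one could slip is forgetting the balancedness hypothesis or mishandling the $n=0$ term.
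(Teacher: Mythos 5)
Your proof takes exactly the same route as the paper's: both reduce the statement to Lemma \ref{decomposition} together with the fact that $\{\mathcal{P}(^nE;F)\}_n$ and $\{\mathcal{P}_w(^nE)\}_n$ are $\mathcal{S}$-absolute decompositions of $\mathcal{H}_b(U;F)$ and $\mathcal{H}_{wu}(U)$ when $U$ is balanced (the paper cites an adaptation of \cite[Proposition 3.36]{Dineen} and the proof of \cite[Theorem 9]{Rueda} for these two facts). One small correction: in part (a) the $n=0$ component is a copy of $F$, which does not \emph{trivially} have $\mathcal{I}$-AP; it can nevertheless be discarded because it is complemented in $\mathcal{P}(^1E;F)=\mathcal{L}(E;F)$, so its $\mathcal{I}$-AP already follows from that of the $n=1$ component by Proposition \ref{complemented}.
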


\begin{proof} Just combine Lemma \ref{decomposition} with the facts that $\{\mathcal{P}(^n E; F)\}_{n=1}^\infty$ is an $\mathcal{S}$-absolute decomposition of $\mathcal{H}_b(U;F)$ (this follows from an adaptation of the proof of \cite[Proposition 3.36]{Dineen}) and that $\{\mathcal{P}_w(^n E)\}_{n=1}^\infty$ is an $\mathcal{S}$-absolute decomposition of $\mathcal{H}_{wu}(U;F)$ (see the proof of \cite[Theorem 9]{Rueda}).
\end{proof}

In the sequel some of our apparently disconnected results will be combined altogether. A Banach space $E$ is said to be {\it polynomially reflexive} if ${\cal P}(^nE)$ is reflexive for every $n\in \mathbb{N}$. For example, Tsirelson's original space $T^*$ is polynomially reflexive \cite{aad}.
\begin{proposition}\label{unification} Let $\cal I$ be an operator ideal such that $\mathcal{L}[\mathcal{I}] \subseteq \mathcal{I}\circ\mathcal{L}$ and ${\cal I} \subseteq {\cal I}^{dual}$ or ${\cal I}^{dual} \subseteq {\cal I}$. The following are equivalent for a polynomially reflexive Banach space $E$ and a balanced open subset $U$ of $E$:\\
{\rm (a)} $E$ has $\cal I$-AP.\\
{\rm (b)} ${\cal P}(^nE)$ has $\cal I$-AP for every $n\in \mathbb{N}$.\\
{\rm (c)} ${\cal P}(^nE)$ has $\cal I$-AP for some $n\in \mathbb{N}$.\\
{\rm (d)} $\mathcal{H}_b(U)$ has $\cal I$-AP.
\end{proposition}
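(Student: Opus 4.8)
The plan is to prove the chain of equivalences by chasing through the results already established in the paper, so that essentially no new work is needed beyond bookkeeping. First I would observe that, since $E$ is polynomially reflexive, each space $\mathcal{P}(^nE)$ is reflexive, and moreover its dual is (isometrically) a space of the form $\mathcal{P}(^nE)'$ which, via the canonical duality with the symmetric tensor product, is identifiable with $\widehat{\otimes}_\pi^{n,s}E$ — more precisely $\mathcal{P}(^nE) = (\widehat{\otimes}_\pi^{n,s}E)'$, and by polynomial reflexivity $\widehat{\otimes}_\pi^{n,s}E = \mathcal{P}(^nE)'$ as well. This is the key structural fact that lets the duality results of Section 4 talk to the tensor results of Section 5.

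The main line of argument: (a) $\Longleftrightarrow$ (d) is immediate from Proposition~\ref{holomor}(a) together with the observation that $\mathcal{H}_b(U)$ has $\mathcal{I}$-AP iff $\mathcal{P}(^nE;\mathbb{C})$ has $\mathcal{I}$-AP for every $n$, and that $\mathcal{P}(^nE)$ having $\mathcal{I}$-AP for every $n$ is (by the equivalence (b)$\Longleftrightarrow$(c) which we are about to establish) the same as (b). The heart is the equivalence of (a), (b) and (c). For (a) $\Longrightarrow$ (b): if $E$ has $\mathcal{I}$-AP, then by Corollary~\ref{cortens} (using $\mathcal{L}[\mathcal{I}] \subseteq \mathcal{I}\circ\mathcal{L}$) the space $\widehat{\otimes}_\pi^{n,s}E$ has $\mathcal{I}$-AP for every $n$; since $\mathcal{P}(^nE) = (\widehat{\otimes}_\pi^{n,s}E)'$ and $\widehat{\otimes}_\pi^{n,s}E$ is reflexive (being the dual of the reflexive space $\mathcal{P}(^nE)$), Theorem~\ref{propdualIAP}(b) applied with $\mathcal{I}_1 = \mathcal{I}_2 = \mathcal{I}$ — whose hypothesis $\mathcal{I}\subseteq\mathcal{I}^{dual}$ or $\mathcal{I}^{dual}\subseteq\mathcal{I}$ is exactly what we assumed — yields that $\mathcal{P}(^nE) = (\widehat{\otimes}_\pi^{n,s}E)'$ has $\mathcal{I}$-AP. (b) $\Longrightarrow$ (c) is trivial. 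For (c) $\Longrightarrow$ (a): if $\mathcal{P}(^nE)$ has $\mathcal{I}$-AP for some $n$, then since $\mathcal{P}(^nE)$ is reflexive and $\mathcal{P}(^nE)' = \widehat{\otimes}_\pi^{n,s}E$, Theorem~\ref{propdualIAP}(a) gives that $\widehat{\otimes}_\pi^{n,s}E$ has $\mathcal{I}$-AP; then Corollary~\ref{cortens} (the implication (e) $\Longrightarrow$ (a) there) forces $E$ to have $\mathcal{I}$-AP.

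The step I expect to be the main obstacle is the identification $\mathcal{P}(^nE)' \cong \widehat{\otimes}_\pi^{n,s}E$ under polynomial reflexivity, and making sure the isometric isomorphisms are natural enough that ``having $\mathcal{I}$-AP'' transfers — this should follow from the standard duality $\mathcal{P}(^nE) = (\widehat{\otimes}_\pi^{n,s}E)'$ combined with reflexivity, but one must be slightly careful because $\mathcal{I}$-AP is not obviously preserved under arbitrary isometric isomorphism unless one invokes Proposition~\ref{propos}(c), which it is. A secondary point to handle cleanly is that Theorem~\ref{propdualIAP} is stated for a reflexive Banach space $E$ and its dual $E'$; here I apply it with the reflexive space being $\widehat{\otimes}_\pi^{n,s}E$ (for (c)$\Rightarrow$(a), after first getting $\mathcal{P}(^nE)$ reflexive) or being $\mathcal{P}(^nE)$ (for (a)$\Rightarrow$(b)), so I should state explicitly which space plays the role of ``$E$'' in each invocation. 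Modulo these identifications, the proof is a short assembly of Corollary~\ref{cortens}, Theorem~\ref{propdualIAP} and Proposition~\ref{holomor}, which is presumably the intended argument.
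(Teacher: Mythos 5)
Your proposal is correct, and for three of the four implications it is exactly the paper's argument: (a) $\Rightarrow$ (b) via Corollary \ref{cortens} followed by duality between $\hat\otimes_\pi^{n,s}E$ and ${\cal P}(^nE)$ (the paper invokes Corollary \ref{dualIAP}, you invoke its parent Theorem \ref{propdualIAP}(b) with ${\cal I}_1={\cal I}_2={\cal I}$ — same thing), and (d) $\Leftrightarrow$ (b) via Proposition \ref{holomor}(a). The one place you genuinely diverge is (c) $\Rightarrow$ (a). The paper quotes Aron--Schottenloher \cite[Proposition 5.3]{as} to the effect that $E'$ is isomorphic to a complemented subspace of ${\cal P}(^nE)$, deduces that $E'$ has $\cal I$-AP from Proposition \ref{complemented}, and then passes from $E'$ to $E$ by Corollary \ref{dualIAP}. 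You instead dualize first — from ${\cal P}(^nE)=(\hat\otimes_\pi^{n,s}E)'$ having $\cal I$-AP you get, by Theorem \ref{propdualIAP}(a) applied to the reflexive space $\hat\otimes_\pi^{n,s}E$, that $\hat\otimes_\pi^{n,s}E$ has $\cal I$-AP — and then descend to $E$ via the implication (e) $\Rightarrow$ (a) of Corollary \ref{cortens}, i.e.\ via Blasco's complementation of $E$ in $\hat\otimes_\pi^{n,s}E$. Both routes are sound and both consume the hypothesis ${\cal I}\subseteq{\cal I}^{dual}$ or ${\cal I}^{dual}\subseteq{\cal I}$ exactly once; yours has the small aesthetic advantage of reusing machinery already assembled in Corollary \ref{cortens} instead of importing an extra complementation result, while the paper's is one step shorter. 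Two cosmetic points: $\hat\otimes_\pi^{n,s}E$ is the \emph{predual}, not the dual, of ${\cal P}(^nE)$ — its reflexivity follows because a space is reflexive iff its dual is, not because it "is the dual" — and your concern about transporting $\cal I$-AP across the isomorphism is legitimate but harmless, since the ideal property makes $\cal I$-AP an isomorphic invariant.
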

\begin{proof} (a) $\Longrightarrow$ (b) Let $n \in \mathbb{N}$. By Corollary \ref{cortens} we know that $\hat\otimes_\pi^{n,s}E$ has $\cal I$-AP. Since ${\cal P}(^nE)$ is isomorphic to $\left(\hat\otimes_\pi^{n,s}E\right)'$ and these spaces are reflexive, by Corollary \ref{dualIAP} we have that ${\cal P}(^nE)$ has $\cal I$-AP.\\
(b) $\Longrightarrow$ (c) This implication is obvious.\\
(c) $\Longrightarrow$ (a) By \cite[Proposition 5.3]{as} it follows that $E'$ is isomorphic to a complemented subspace of ${\cal P}(^nE)$, so $E'$ has $\cal I$-AP by Proposition \ref{complemented}. Then $E$ has $\cal I$-AP by Corollary \ref{dualIAP}.\\
(d) $\Longleftrightarrow$ (b) This equivalence follows from Corollary \ref{holomor}(a).
\end{proof}

To get another connection of the results from different sections we consider the predual of the space of holomorphic functions: given an open subset $U$ of a Banach space, Mazet \cite{mazet} proved the existence of a complete locally convex space $G(U)$ and of a canonical holomorphic function $\delta_U \colon U \longrightarrow G(U)$ such that for every Banach space $F$ and every holomorphic function from $U$ to $F$ there is a unique continuous linear operator $T_f$ from $G(U)$ to $F$ such that $f = T_f \circ \delta_U$.

\begin{proposition}\label{predual} Let $U$ be a balanced open subset of the Banach space $E$ and $\mathcal{I}$ be an operator ideal such that $\mathcal{L}[\mathcal{I}] \subseteq \mathcal{I}\circ\mathcal{L}$. Then $E$ has $\cal I$-AP if and only if $G(U)$ has $\cal I$-AP.
\end{proposition}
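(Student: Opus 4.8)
The plan is to relate the $\mathcal{I}$-AP of $G(U)$ to the $\mathcal{I}$-AP of $E$ by using the universal property of $G(U)$ together with the known structural facts: $\{\mathcal{P}(^nE)\}_{n=1}^\infty$ (equivalently $\{\hat\otimes_\pi^{n,s}E\}_{n=1}^\infty$, after dualizing) produces an $\mathcal{S}$-absolute decomposition of $G(U)$ (see \cite[Section 3.3]{Dineen}, \cite{mazet}), so that Lemma \ref{decomposition} reduces the $\mathcal{I}$-AP of $G(U)$ to the $\mathcal{I}$-AP of each piece; and each piece is (isomorphic to) $\hat\otimes_\pi^{n,s}E$. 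First I would recall that $G(U)$ carries such a decomposition whose $n$-th summand is topologically isomorphic to $\hat\otimes_\pi^{n,s}E$ — this is essentially the predual counterpart of the fact, used already in Proposition \ref{holomor}, that $\{\mathcal{P}(^nE;F)\}_{n=1}^\infty$ is an $\mathcal{S}$-absolute decomposition of $\mathcal{H}_b(U;F)$ (adapt \cite[Proposition 3.36]{Dineen}, or cite Mazet \cite{mazet} and \cite[Section 3.3]{Dineen} directly).

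Granting that, the argument runs as follows. Suppose $E$ has $\mathcal{I}$-AP. By Corollary \ref{cortens} (which applies because $\mathcal{L}[\mathcal{I}] \subseteq \mathcal{I}\circ\mathcal{L}$), $\hat\otimes_\pi^{n,s}E$ has $\mathcal{I}$-AP for every $n\in\mathbb{N}$. Since the $\mathcal{I}$-AP of the summands of an $\mathcal{S}$-absolute decomposition is equivalent to the $\mathcal{I}$-AP of the whole space by Lemma \ref{decomposition}, it follows that $G(U)$ has $\mathcal{I}$-AP. Conversely, suppose $G(U)$ has $\mathcal{I}$-AP. By Lemma \ref{decomposition} again, each summand $\hat\otimes_\pi^{n,s}E$ has $\mathcal{I}$-AP; taking $n=1$, and noting $\hat\otimes_\pi^{1,s}E = E$, we get that $E$ has $\mathcal{I}$-AP. (Alternatively, and without even needing the full decomposition for this direction, $E$ embeds as a $1$-complemented subspace of $G(U)$ via $\delta_U$ and the linearization of the identity-type map, so Proposition \ref{complemented} in its locally convex form, cf. Remark \ref{remark}, finishes it.)

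The step I expect to be the main obstacle is pinning down precisely that $G(U)$ admits an $\mathcal{S}$-absolute decomposition whose $n$-th summand is isomorphic to $\hat\otimes_\pi^{n,s}E$: Mazet's predual $G(U)$ is defined by a universal property rather than explicitly, so one must either invoke the known identification of $G(U)$ with a suitable inductive/projective limit carrying the monomial decomposition, or dualize the $\mathcal{S}$-absolute decomposition of $\mathcal{H}_b(U;F)$ used in Proposition \ref{holomor}(a) and transfer it back through the duality $\mathcal{H}_b(U)\cong G(U)'$. Once this structural fact is in hand, the rest is a direct application of Corollary \ref{cortens}, Lemma \ref{decomposition}, and (for the easy direction) Proposition \ref{complemented} in the locally convex setting, with no new estimates required.
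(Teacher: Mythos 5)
Your argument is essentially the paper's own: the paper also reduces the claim to the $\mathcal{S}$-absolute decomposition of $G(U)$ into summands $Q(^nE)\cong\hat\otimes_\pi^{n,s}E$ and then applies Lemma \ref{decomposition} together with Corollary \ref{cortens}. The structural fact you flag as the main obstacle is exactly what the paper settles by citing Boyd (\emph{Distinguished preduals of spaces of holomorphic functions}, Proposition 4 and p.~223), so your proof is correct once that reference is supplied.
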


\begin{proof} For every $n \in \mathbb{N}$ let $Q(^nE)$ be the space of all linear functionals $\varphi$ on ${\cal P}(^nE)$ such that the restriction of $\varphi$ to each locally bounded subset is ${\tau}_c$-continuous. By \cite[Proposition 4]{boyd} we have that $\{Q(^nE)\}_{n=1}^\infty$ is an $\cal S$-absolute decomposition of $G(U)$, so, by Lemma \ref{decomposition}, $G(U)$ has $\cal I$-AP if and only if $Q(^nE)$ has $\cal I$-AP for every $n$. But $Q(^nE)$ is isomorphic to $\hat\otimes_\pi^{n,s}E$ (see \cite[p.\,223]{boyd}), so by Corollary \ref{cortens} we have that $G(U)$ has $\cal I$-AP if and only if $Q(^nE)$ has $\cal I$-AP for every $n$ if and only if $\hat\otimes_\pi^{n,s}E$ has $\cal I$-AP for every $n$ if and only if $E$ has $\cal I$-AP.
\end{proof}

The results from Section 6 have not been combined with results from other sections yet. For results of Section 6 to come into play we investigate the $\cal I$-approximation property in the predual of the space ${\cal H}^\infty(U;F)$ of bounded holomorphic functions from an open subset $U$ of a Banach space $E$ to a Banach space $F$. ${\cal H}^\infty(U;F)$ is a Banach space with the sup norm. Let $U$ be an open subset of a Banach space $E$. Mujica \cite{Mujica1} proved the existence of a Banach space $G^{\infty}(U)$ and of a canonical bounded holomorphic mapping $\delta_{U}\in {\cal{H}}^{\infty}(U; G^{\infty}(U))$ with the following universal property:
 to every $f\in\mathcal{H}^{\infty}(U; F)$ corresponds a unique
 linear operator $T_f \in \mathcal{L}(G^{\infty}(U); F)$ such that
 $f=T_f\circ \delta_U$. He also introduced a very useful locally convex topology on ${\cal H}^\infty(U;F)$:

\begin{theorem}{\rm \cite[Theorem 4.8]{Mujica1}}\label{teomujica}
Let $E$ and $F$ be Banach spaces, and let $U$ be an open subset of
$E$. Let $\tau_{\gamma}$ denote the locally convex topology on
${\cal H}^{\infty}(U; F)$ generated by the seminorms of the form
$$p(f)=\sup_{j} \alpha_j\|f(x_j)\|,$$
where $(x_j)$ varies over all sequences in $U$ and $(\alpha_j)$
varies over all sequences of positive real numbers tending to zero. Then
the mapping
$$f\in (\mathcal{H^{\infty}}(U; F), \tau_{\gamma}) \longrightarrow T_f \in (\mathcal{L}(G^{\infty}(U), \tau_c)$$
is a topological isomorphism.
\end{theorem}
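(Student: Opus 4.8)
The plan is to establish separately that $\Phi \colon f \mapsto T_f$ is a linear bijection, that $\Phi^{-1}$ is continuous, and that $\Phi$ is continuous. Bijectivity is a formality following from Mujica's universal property: $\Phi$ is linear and injective because $f = T_f \circ \delta_U$, and it is surjective because for any $T \in \mathcal{L}(G^{\infty}(U);F)$ the composition $T \circ \delta_U$ lies in $\mathcal{H}^{\infty}(U;F)$ (it is $T$ applied to the bounded holomorphic mapping $\delta_U$) and its associated operator is $T$ by the uniqueness clause. So the whole content lies in comparing $\tau_{\gamma}$ with the topology $\tau_c$ pulled back along $\Phi$.

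The computation at the heart of everything is: for a sequence $(x_j) \subseteq U$ and positive scalars $\alpha_j \to 0$,
$$\sup_j \alpha_j \|f(x_j)\| = \sup_j \|T_f(\alpha_j \delta_U(x_j))\| = p_S(T_f), \qquad S := \{\alpha_j \delta_U(x_j) : j \in \mathbb{N}\} \cup \{0\}.$$
Since $\|\delta_U(x)\| \le 1$, we have $\|\alpha_j \delta_U(x_j)\| \le \alpha_j \to 0$, so $S$ is a compact subset of $G^{\infty}(U)$ and $p_S$ is one of the seminorms defining $\tau_c$. Thus each defining seminorm of $\tau_{\gamma}$ is carried by $\Phi^{-1}$ exactly to a $\tau_c$-continuous seminorm, and continuity of $\Phi^{-1}$ follows immediately.

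For the continuity of $\Phi$ I must show that for every compact $K \subseteq G^{\infty}(U)$ the seminorm $f \mapsto p_K(T_f)$ is dominated by a $\tau_{\gamma}$-seminorm of $f$. Using that $p_K(T) = p_{\overline{\Gamma}(K)}(T)$ for every bounded set $K$ (where $\overline{\Gamma}(\,\cdot\,)$ denotes the closed absolutely convex hull), the displayed identity reduces this to the \emph{key lemma}: every compact subset $K$ of $G^{\infty}(U)$ is contained in $\overline{\Gamma}(\{\alpha_j \delta_U(x_j) : j \in \mathbb{N}\})$ for some $(x_j) \subseteq U$ and some positive scalars $\alpha_j \to 0$. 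Granting the lemma, if $K \subseteq \overline{\Gamma}(S)$ with $S = \{\alpha_j \delta_U(x_j)\}$, then $p_K(T_f) \le p_S(T_f) = \sup_j \alpha_j \|f(x_j)\|$, and the right-hand side is a defining $\tau_{\gamma}$-seminorm of $f$, so the proof is complete.

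To prove the lemma I would argue as follows. By Grothendieck's description of compact sets in a Banach space, $K \subseteq \overline{\Gamma}(\{z_n : n \in \mathbb{N}\})$ for a sequence $z_n \to 0$ in $G^{\infty}(U)$, which one may take with $z_n \ne 0$ and $\|z_1\| \ge \|z_2\| \ge \cdots$. Since $\mathcal{H}^{\infty}(U) = G^{\infty}(U)'$ isometrically via evaluation, the bipolar theorem together with Mazur's theorem gives $\overline{\Gamma}(\delta_U(U)) = B_{G^{\infty}(U)}$; iterating the resulting approximations of $z_n/\|z_n\|$ by finite absolutely convex combinations of values of $\delta_U$ produces a norm-convergent expansion $z_n = \sum_{m \ge 1} \|z_n\|\, w^n_m$ in which each $w^n_m$ is a finite combination of values of $\delta_U$ whose coefficients have $\ell_1$-norm at most $4^{1-m}$. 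The decisive move is to reweight: assigning weight $4\|z_n\| 2^{-m}$ to each $\delta_U$-value occurring in $w^n_m$, and using $\sum_m 2^m 4^{1-m} < \infty$, one checks that $z_n \in \overline{\Gamma}(S_n)$ for the set $S_n$ of these weighted values. Put $S^* := \bigcup_n S_n$; then $K \subseteq \overline{\Gamma}(\{z_n\}) \subseteq \overline{\Gamma}(S^*)$ and $S^*$ is countable. The weight $4\|z_n\| 2^{-m}$ of a generic element exceeds a prescribed $T > 0$ only for $(n,m)$ with $n$ and $m$ below bounds depending on $T$ (because $\|z_n\| \le \|z_1\|$ and $\|z_n\| \to 0$), hence only finitely many elements of $S^*$ have weight above any level; therefore $S^*$ can be enumerated as $\{\alpha_j \delta_U(x_j)\}$ with $\alpha_j > 0$ and $\alpha_j \to 0$, which is the required set. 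The main obstacle is precisely this reweighting step: a naive expansion of $z_n$ into values of $\delta_U$ forces infinitely many terms carrying a fixed weight of size comparable to $\|z_n\|$, and it is only the organisation into geometrically shrinking blocks that yields weights tending to zero along a single enumeration; everything else is soft functional analysis.
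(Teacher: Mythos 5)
Your proof is correct, and there is nothing in the paper to compare it against: the statement is quoted verbatim from Mujica (Theorem 4.8 of \cite{Mujica1}) and the paper gives no proof of it. Your argument --- reducing everything to the identity $\sup_j \alpha_j\|f(x_j)\| = p_S(T_f)$ for $S=\{\alpha_j\delta_U(x_j)\}\cup\{0\}$, and then proving the key lemma that every compact subset of $G^\infty(U)$ sits inside $\overline{\Gamma}(\{\alpha_j\delta_U(x_j)\})$ via $B_{G^\infty(U)}=\overline{\Gamma}(\delta_U(U))$, a Grothendieck null-sequence reduction, and the geometric reweighting of the iterated approximations --- is essentially Mujica's original proof, and all the steps (including the only delicate one, that for each threshold only finitely many of the weights $4\|z_n\|2^{-m}$ exceed it) check out.
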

We denote by $\mathcal{I}\circ \mathcal{H}^{\infty}(U; F)$ the
collection of all $f \in \mathcal{H}^{\infty}(U;F)$ so that
$f=u\circ g$, where $G$  is a Banach space, $g\in
\mathcal{H}^{\infty}(U;G)$ and $u\in \mathcal{I}(G;F)$. Next result extends \cite[Theorem 5]{Caliskan1}.

\begin{theorem} Let $\mathcal{I}$ be an operator ideal such that $\cal{L}[\cal{I}]\subseteq \cal{I}\circ
\cal{L}$. The following conditions are
equivalent for a Banach space $E$ and an open subset $U$ of $E$:\\
{\rm(a)} $ E$ has $\mathcal{I}$-$AP$.\\
{\rm (b)} $\mathcal{H}^{\infty}(U;F)= \overline{\cal{I}\circ
\cal{H}^{\infty}(U; F)}^{\tau_{\gamma}}$ for every
Banach space $F$.\\
{\rm (c)} $G^{\infty}(U)$ has $\cal{I}$-AP.
\end{theorem}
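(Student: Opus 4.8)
The natural strategy is to mimic the structure of the proof of Theorem~\ref{polideal} and Proposition~\ref{predual}, transporting everything through the topological isomorphism $f \mapsto T_f$ between $(\mathcal{H}^\infty(U;F),\tau_\gamma)$ and $(\mathcal{L}(G^\infty(U);F),\tau_c)$ supplied by Theorem~\ref{teomujica}. I would prove the cycle (a)$\Longrightarrow$(b)$\Longrightarrow$(c)$\Longrightarrow$(a), exactly the same shape as the earlier polynomial results.

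\textbf{(a)$\Longrightarrow$(b).} Assume $E$ has $\mathcal{I}$-AP. Since $\delta_U \in \mathcal{H}^\infty(U;G^\infty(U))$ linearizes to $id_{G^\infty(U)}$, and since (as in Proposition~\ref{predual}) $\mathcal{L}[\mathcal{I}] \subseteq \mathcal{I}\circ\mathcal{L}$ forces $G^\infty(U)$ to have $\mathcal{I}$-AP (via the $\mathcal{S}$-absolute decomposition of $G^\infty(U)$ into the spaces $Q(^nE)\cong \hat\otimes_\pi^{n,s}E$, Lemma~\ref{decomposition} and Corollary~\ref{cortens}), we get from Proposition~\ref{charac} that $\mathcal{L}(G^\infty(U);F) = \overline{\mathcal{I}(G^\infty(U);F)}^{\tau_c}$ for every $F$. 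Now fix $f \in \mathcal{H}^\infty(U;F)$, a $\tau_\gamma$-seminorm $p$, and $\varepsilon > 0$. The isomorphism of Theorem~\ref{teomujica} translates $p$ into a $\tau_c$-seminorm on $\mathcal{L}(G^\infty(U);F)$, i.e.\ a compact subset $K$ of $G^\infty(U)$; pick $u \in \mathcal{I}(G^\infty(U);F)$ with $\sup_{x \in K}\|u(x) - T_f(x)\| < \varepsilon$. Then $g := u \circ \delta_U \in \mathcal{I}\circ\mathcal{H}^\infty(U;F)$ and, pulling back through the isomorphism, $p(g - f) < \varepsilon$. Hence $f \in \overline{\mathcal{I}\circ\mathcal{H}^\infty(U;F)}^{\tau_\gamma}$.

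\textbf{(b)$\Longrightarrow$(c).} Apply (b) with $F = G^\infty(U)$ to the function $\delta_U$ itself: there is a net $(g_\alpha) \subseteq \mathcal{I}\circ\mathcal{H}^\infty(U;G^\infty(U))$ with $g_\alpha \to \delta_U$ in $\tau_\gamma$. Writing $g_\alpha = u_\alpha \circ h_\alpha$ with $u_\alpha \in \mathcal{I}$, the linearizations $T_{g_\alpha} = u_\alpha \circ T_{h_\alpha}$ lie in $\mathcal{I}(G^\infty(U);G^\infty(U))$ by the ideal property, and $T_{g_\alpha} \to T_{\delta_U} = id_{G^\infty(U)}$ in $\tau_c$ by Theorem~\ref{teomujica}. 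So $id_{G^\infty(U)} \in \overline{\mathcal{I}(G^\infty(U);G^\infty(U))}^{\tau_c}$, i.e.\ $G^\infty(U)$ has $\mathcal{I}$-AP by Proposition~\ref{charac}(c). For \textbf{(c)$\Longrightarrow$(a)}, the cleanest route is to exhibit $E$ (or a space known to determine $\mathcal{I}$-AP for $E$) as a complemented subspace of $G^\infty(U)$ and invoke Proposition~\ref{complemented}; Mujica's theory gives that $G^\infty(U)$ contains a complemented copy of the relevant projective symmetric tensor powers / of $E$ itself, after which Corollary~\ref{cortens} closes the loop. Alternatively one repeats the ``evaluate at a point'' trick from the proof of (c)$\Longrightarrow$(a) in Theorem~\ref{polideal}: given $v \in \mathcal{L}(E;F)$, a compact $K \subseteq E$ and $\varepsilon>0$, build a suitable $f \in \mathcal{H}^\infty(U;F)$ (a polynomial times a scalar-valued bounded holomorphic factor) whose approximants in $\mathcal{I}\circ\mathcal{H}^\infty(U;F)$ yield, via differentiation at a base point $a \in K$ with $\varphi(a)=1$, an operator in $\mathcal{I}(E;F)$ close to $v$ on $K$.

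\textbf{Main obstacle.} The delicate point is (c)$\Longrightarrow$(a): one must recover $\mathcal{I}$-AP of the ``small'' space $E$ from $\mathcal{I}$-AP of the ``large'' space $G^\infty(U)$, and unlike the predual $G(U)$ of Mazet — where the $\mathcal{S}$-absolute decomposition directly isolates $\hat\otimes_\pi^{n,s}E$ — the space $G^\infty(U)$ does not decompose so transparently. The work will be in checking that $G^\infty(U)$ still admits $E$ (or $\hat\otimes_\pi^{n,s}E$ for some, equivalently every, $n$) as a complemented subspace, which is where the hypothesis $\mathcal{L}[\mathcal{I}] \subseteq \mathcal{I}\circ\mathcal{L}$ is again used together with Corollary~\ref{cortens}; everything else is a routine transcription through Theorem~\ref{teomujica} of the arguments already carried out for $\mathcal{H}_b$ and for polynomial ideals.
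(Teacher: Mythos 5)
Your implications (b)$\Longrightarrow$(c) and (c)$\Longrightarrow$(a) are essentially the paper's: for the former one linearizes the approximating net through Theorem \ref{teomujica} and uses the ideal property, and for the latter no tensor-power detour is needed, since \cite[Proposition 2.3]{Mujica1} already gives $E$ \emph{itself} as a complemented subspace of $G^{\infty}(U)$, after which Proposition \ref{complemented} finishes. So you have misplaced the difficulty: (c)$\Longrightarrow$(a) is the easy step, not the delicate one.

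The genuine gap is in (a)$\Longrightarrow$(b). You deduce it from the claim that $G^{\infty}(U)$ has $\mathcal{I}$-AP because it admits the $\mathcal{S}$-absolute decomposition $\{Q(^{n}E)\}_{n}$ with $Q(^{n}E)\cong\hat\otimes_\pi^{n,s}E$. That decomposition is Boyd's decomposition of Mazet's predual $G(U)$ of the space of \emph{all} holomorphic functions, used in Proposition \ref{predual}; it is not available for Mujica's predual $G^{\infty}(U)$ of the \emph{bounded} holomorphic functions. Indeed $G^{\infty}(U)$ is a Banach space, and a Banach space cannot carry an infinite $\mathcal{S}$-absolute decomposition into nonzero subspaces: applying condition (2) of the definition with $p=\|\cdot\|$ and $\alpha_n=n$ (note $n^{1/n}\to 1$, so $(\alpha_n)\in\mathcal{S}$) to a nonzero $x\in E_m$ forces $m\|x\|\le C\|x\|$ for a fixed constant $C$, so only finitely many summands can be nonzero. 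Hence your route to ``$E$ has $\mathcal{I}$-AP $\Rightarrow G^{\infty}(U)$ has $\mathcal{I}$-AP'', and with it your proof of (a)$\Longrightarrow$(b), collapses. The paper instead proves (a)$\Longrightarrow$(b) directly at the level of functions: by \cite[Proposition 5.2]{Mujica1} the polynomials are $\tau_{\gamma}$-dense in $\mathcal{H}^{\infty}(U;F)$; by Theorem \ref{polideal} (the implication (a)$\Rightarrow$(f), which is exactly where the hypothesis $\mathcal{L}[\mathcal{I}]\subseteq\mathcal{I}\circ\mathcal{L}$ enters) each homogeneous piece $P_j$ is $\tau_c$-approximable by some $Q_j\in\mathcal{I}\circ\mathcal{P}(^{j}E;F)$; by \cite[Proposition 4.9]{Mujica1} the topologies $\tau_{\gamma}$ and $\tau_c$ coincide on each $\mathcal{P}(^{j}E;F)$; and finally the finitely many $Q_j$ are recombined into a single factorization $Q=u\circ R$ with $u\in\mathcal{I}$ (as in \cite[Theorem 2.2]{Botelho2}), whose restriction to $U$ lies in $\mathcal{I}\circ\mathcal{H}^{\infty}(U;F)$. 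Only after (b) is established does one obtain (c), and then (a) is recovered from (c) via the complemented copy of $E$.
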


\begin{proof} (a) $\Longrightarrow$ (b) Let $f\in
\mathcal{H^{\infty}}(U;F)$. Let $p$ be a continuous semi-norm on $
(\mathcal{H^{\infty}}(U;F), \tau_{\gamma})$. By \cite[Proposition
5.2]{Mujica1} there are homogeneous polynomials $P_j\in P(^jE;F)$, $j=0, 1, \ldots,n$, such that
$p(P-f)<\frac{\varepsilon}{2}$ where $P = P_0 +P_1+ \cdots + P_n$. Since $E$
has $\cal{I}$-AP and $\cal{L}[\cal{I}]\subseteq \cal{I}\circ
\cal{L}$, it follows from Proposition \ref{polideal} that
$\mathcal{P}(^jE;F)=\overline{\mathcal{I}\circ\mathcal{P}(^jE;F)}^{\tau_c}$
for every $j\in \mathbb{N}$. On the other hand, by \cite[Proposition
4.9]{Mujica1}, $\tau_{\gamma}= \tau_{c}$ on $P\in P(^{j}E;F)$ for
every $j\in \mathbb{N}$. So there are homogeneous polynomials $Q_j\in \mathcal{I}\circ
\mathcal{P}(^j E;F)$ such that $$p(Q_j-P_j)<\frac{\varepsilon}{2(n+1)}$$
for every $j=0, 1, \ldots n$. Putting $Q=Q_0+Q_1+ \cdots +Q_n$, mimicking an
argument used in the proof of \cite[Theorem 2.2]{Botelho2} one can easily prove that $Q$ is
of the form $Q=u\circ R$ where $u\in {\cal I}(E;G)$, $G$ is a Banach space and $R$ is a finite sum of homogeneous polynomials from $G$ to $F$.
Then the restriction of $Q$ to $U$, still denoted by $Q$, is a bounded holomorphic function, so $Q \in \mathcal{I}\circ \mathcal{H^{\infty}}(U; F)$. Since
$$p(Q-P) = p\left(\sum_{j=0}^n Q_j - \sum_{j=0}^n P_j  \right) \leq \sum_{j=0}^n p(Q_j - P_j) <\frac{\varepsilon}{2},$$
if follows that
$$p(Q-f)\leq
p(Q-P)+p(P-f)< \frac{\varepsilon}{2} + \frac{\varepsilon}{2} = \varepsilon,$$
which proves (b).

\medskip

\noindent (b) $\Longrightarrow$ (c) By \cite[Theorem
2.1]{Mujica1}, $\delta_{U}\in
\mathcal{H}^{\infty}(U;G^{\infty}(U))$. Taking $F=G^{\infty}(U)$ in
(b), we have that $\delta_{U}\in
\overline{\mathcal{H}^{\infty}(U;G^{\infty}(U))}^{\tau_{\gamma}}$. Hence there is a net $(f_\alpha)\subseteq
\mathcal{H}^{\infty}(U;G^{\infty}(U))$ such that $\displaystyle
f_{\alpha}\stackrel{\tau_{\gamma}}\longrightarrow \delta_{U}$. As to the corresponding net $(T_{f_\alpha})$ of linear operators, by Theorem \ref{teomujica} we get that
$$T_{f_\alpha} \stackrel{\tau_c}{\longrightarrow} T_{\delta_U} = id_{G^\infty(U)}. $$ But
\cite[Proposition 4.2]{Botelho2} gives that $(T_{f_\alpha})\subseteq \mathcal{I}(G^{\infty}(U);
G^{\infty}(U) )$. Therefore $id_{G^{\infty}(U)}\in
\overline{\mathcal{I}(G^{\infty}(U); G^{\infty}(U) )}^{\tau_c}$. By
Proposition \ref{charac} we have that $G^{\infty}(U)$ has $\mathcal{I}-$AP.

\medskip

\noindent (c) $\Longrightarrow$ (a) By \cite[Proposition
2.3]{Mujica1}, $E$ is topologically isomorphic to a complemented
subspace of $G^{\infty}(U)$, which has
$\mathcal{I}$-AP by assumption. It follows from Proposition \ref{complemented}
that $E$ has $\mathcal{I}-$AP.
\end{proof}

\vspace*{1em} \noindent Faculdade de Matem\'atica\\
Universidade Federal de Uberl\^andia\\
38.400-902 - Uberl\^andia - Brazil\\
e-mails: soniles@famat.ufu.br, botelho@ufu.br

\end{document}